\title      [Some computations of Frobenius-Schur indicators]
            {Some computations of Frobenius-Schur indicators of the regular representations of Hopf algebras}
\dedicatory {Dedicated to Professor Mitsuhiro Takeuchi in honor of his distinguished career}
\author     [K. Shimizu]{Kenichi Shimizu}
\address    [K. Shimizu]{Institute of Mathematics, University of Tsukuba. Tsukuba, Ibaraki, 305-8571, Japan.}
\newtheorem{counter}            {}[section]
\theoremstyle{definition}
\newtheorem{definition}         [counter]{Definition}
\theoremstyle{plain}
\newtheorem{lemma}              [counter]{Lemma}
\newtheorem{proposition}        [counter]{Proposition}
\newtheorem{theorem}            [counter]{Theorem}
\newtheorem{corollary}          [counter]{Corollary}
\newtheorem{question}           [counter]{Question}
\theoremstyle{remark}
\newtheorem{remark}             [counter]{Remark}
\newcommand{\sgn}       {\mathop{\rm sgn}\nolimits}
\newcommand{\lcm}       {\mathop{\rm lcm}\nolimits}
\newcommand{\ord}       {\mathop{\rm ord}\nolimits}
\newcommand{\floor}     [1]{{\left\lfloor{#1}\right\rfloor}}
\newcommand{\id}        {\mathop{\rm id}\nolimits}
\newcommand{\op}        {{\rm op}}
\newcommand{\cop}       {{\rm cop}}
\newcommand{\Hom}       {\mathop{\rm Hom}\nolimits}
\newcommand{\End}       {\mathop{\rm End}\nolimits}
\newcommand{\Ker}       {\mathop{\rm Ker}\nolimits}
\newcommand{\Irr}       {\mathop{\rm Irr}\nolimits}
\newcommand{\Rep}       {\mathop{\rm Rep}\nolimits}
\newcommand{\Vect}      {\mathop{\bf Vec}\nolimits}
\newcommand{\Trace}     {\mathop{\rm Tr}\nolimits}
\newcommand{\ctrace}    {\mathop{\underline{\rm tr}}\nolimits}
\newcommand{\ptrace}    {\mathop{\rm ptr}\nolimits}
\newcommand{\pdim}      {\mathop{\rm pdim}\nolimits}
\newcommand{\FPdim}     {\mathop{\rm FPdim}\nolimits}
\newcommand{\bicross}   {\mathop{\Join}\nolimits}
\newcommand{\Opext}     {\mathop{\rm Opext}\nolimits}
\newcommand{\legendre}  [2]{{\genfrac{(}{)}{1pt}{}{#1}{#2}}}
\newcommand{\sslash}    {{/\!\!/}} 
\newenvironment{enumalph}
{\begin{enumerate}
    \renewcommand{\labelenumi}{\textnormal{\hbox to 1.4em{(\hfil\alph{enumi}\hfil)}}}}
  {\end{enumerate}}
\begin{document}

\begin{abstract}
  We study Frobenius-Schur indicators of the regular representations of finite-di\-men\-sion\-al semi\-simple Hopf algebras, especially group-the\-o\-ret\-i\-cal ones. Those of various Hopf algebras are computed explicitly. In view of our computational results, we formulate the theorem of Frobenius for semisimple Hopf algebras and give some partial results on this problem.
\end{abstract}

\maketitle

\section{Introduction}
\label{sec:introduction}

In \cite{MR1808131}, Linchenko and Montgomery defined Frobenius-Schur indicators of characters of finite-di\-men\-sion\-al semisimple Hopf algebras as follows: For a character $\chi$ of such a Hopf algebra $H$, the $n$-th Frobenius-Schur indicator of $\chi$ is given by
\begin{equation}
  \label{eq:FS-Hopf}
  \nu_n(\chi) = \chi(\Lambda^{[n]})
\end{equation}
where $\Lambda \in H$ is the normalized integral and $(-)^{[n]}$ means the $n$-th Sweedler power.

In this paper, we study and compute Frobenius-Schur indicators $\nu_n(H)$ of (the characters of) the regular representations of finite-dimensional semisimple Hopf algebras $H$. It follows from the work of Ng and Schauenburg \cite{MR2381536} that these numbers have the following remarkable property: $\nu_n(H) = \nu_n(L)$ for every $n$ if the category of $H$-modules and that of $L$-modules are monoidally equivalent (see also \cite{KMN09}). In view of this invariance, studying them is important for the representation theory of Hopf algebras. In fact, Kashina, Montgomery and Ng studied $\nu_n(H)$ and gave some applications to the representation theory in \cite{KMN09}.

For that reasons, it is interesting to compute explicit values of $\nu_n(H)$. There is a formula of $\nu_n(H)$ due to Kashina, Sommerh\"auser and Zhu: In \cite{MR2213320}, they showed that it is equal to the trace of the linear map $h \mapsto S(h^{[n - 1]})$ ($h \in H$) where $S$ is the antipode of $H$. However, because of difficulties of the computation of the Sweedler power, the computation of $\nu_n(H)$ is still difficult.

To compute Frobenius-Schur indicators of the regular representation, we introduce some new methods. Note that Ng and Schauenburg generalized Frobenius-Schur indicators to objects of linear pivotal categories in \cite{MR2381536}. By using their definition, for a pivotal fusion category $\mathcal{C}$, we define its $n$-th indicator $\nu_n(\mathcal{C})$ in Section~\ref{sec:regular} so that $\nu_n(\mathcal{C}) = \nu_n(H)$ when $\mathcal{C}$ is the category $\Rep(H)$ of finite-dimensional representations of $H$. It turns out that $\nu_n(\mathcal{C})$ is an invariant of fusion categories admitting pivotal structures. By using some categorical methods, we introduce a formula for indicators of group-theoretical categories, which are a well-studied class of fusion categories. Since many known semisimple Hopf algebras are group-theoretical, this formula can be applied to compute $\nu_n(H)$ for various $H$.

As is well-known in the theory of finite groups, for a finite group $G$,
\begin{equation*}
  \nu_n(\mathbb{C}G) = \# \{ g \in G \mid g^n = 1 \}.
\end{equation*}
Frobenius showed that the right-hand side of this equation is divisible by $n$ if $n$ divides $|G|$. It is natural to ask whether $\nu_n(H)$ has the same property (see Definition~\ref{def:frobenius} for a precise statement). Our computational results, which are obtained in Section~\ref{sec:GT-example}, seem to suggest that the answer to this question is ``yes''. We cannot provide a complete answer to this question, but give some partial results on it. 

This paper is organized as follows: In Section~\ref{sec:preliminaries}, we summarize some results on fusion categories and provide some lemmas. In Section~\ref{sec:regular}, we define the $n$-th indicator $\nu_n(\mathcal{C})$ of a pivotal fusion category $\mathcal{C}$ and introduce some basic properties of $\nu_n(\mathcal{C})$. One of the most important property is that $\nu_n(\mathcal{C}) = \nu_n(\mathcal{D})$ for every $n$ if $\mathcal{C}$ and $\mathcal{D}$ are monoidally equivalent (Theorem~\ref{thm:FS-invariance}). We also show that $\nu_n(\mathcal{C})$ is a cyclotomic integer and that $\nu_n(\mathcal{C})$ is real if $\mathcal{C}$ admits a braiding.

In Section~\ref{sec:group-theoretical}, we introduce a formula for indicators of group-theoretical categories. By using this formula, we argue arithmetic properties of indicators of group-theoretical categories. In the following Section~\ref{sec:GT-example}, we compute Frobenius-Schur indicators of the regular representations of various Hopf algebras. In Section~\ref{sec:frobenius}, we formulate and discuss Frobenius theorem for semisimple Hopf algebras, in view of results of Section~\ref{sec:group-theoretical} and Section~\ref{sec:GT-example}.

\section{Preliminaries}
\label{sec:preliminaries}

\subsection{Pivotal categories}

Throughout this paper, the basic theory of Hopf algebras and monoidal categories will be freely used. Our main references are \cite{MR1797619}, \cite{MR2183279} and \cite{MR1321145}. We work over the field $\mathbb{C}$ of complex numbers. Unless otherwise noted, quasi-Hopf algebras are assumed to be finite-dimensional (over $\mathbb{C}$). Functors between $\mathbb{C}$-linear categories are always assumed to be $\mathbb{C}$-linear.

First we fix some conventions. In a monoidal category $\mathcal{C}$, the associativity isomorphism is denoted by $a_{X,Y,Z}: (X \otimes Y) \otimes Z \to X \otimes (Y \otimes Z)$. We may assume that the unit object $\mathbf{1} \in \mathcal{C}$ is strictly unital, that is, it satisfies $\mathbf{1} \otimes V = V = V \otimes \mathbf{1}$ and the unit constraints are identities. A left dual object of $V \in \mathcal{C}$ is denoted by $V^*$ if it exists. Duality morphisms are usually denoted by
\begin{equation*}
  b_V: {\bf 1} \to V \otimes V^* \quad \text{and} \quad
  d_V: V^* \otimes V \to {\bf 1}.
\end{equation*}
We say that $\mathcal{C}$ is left rigid if every object of $\mathcal{C}$ has a left dual. If $\mathcal{C}$ is left rigid, the assignment $V \mapsto V^*$ extends naturally to a contravariant endofunctor $(-)^*$ on $\mathcal{C}$, and $(-)^{**}$ is naturally a monoidal endofunctor on $\mathcal{C}$.

The {\em trace} of a morphism $f: V \to V^{**}$ in $\mathcal{C}$ is given by
\begin{equation*}
  \ctrace(f) = d_{V^*} \circ (f \otimes \id_{V^*}) \circ b_V \in \End_{\mathcal{C}}(\mathbf{1}).
\end{equation*}
Let $g: W \to W^{**}$ be another morphism in $\mathcal{C}$. We can regard $f \otimes g$ as a morphism $V \otimes W \to (V \otimes W)^{**}$ via the canonical isomorphism. It is well-known that
\begin{equation}
  \label{eq:ctrace}
  \ctrace(f \otimes g) = \ctrace(f) \circ \ctrace(g) \in \End_\mathcal{C}(\mathbf{1})
\end{equation}
if $\id_V \otimes \ctrace(g) = \ctrace(g) \otimes \id_V$. This condition is satisfied, for example, if $\mathcal{C}$ is a fusion category which will be mentioned later.

A {\em pivotal structure} on a left rigid monoidal category $\mathcal{C}$ is an isomorphism $\id_\mathcal{C} \to (-)^{**}$ of monoidal functors. Let $j$ be a pivotal structure on $\mathcal{C}$. It is known that $j_V^* = j_{V^*}^{-1}$ for every $V \in \mathcal{C}$, see \cite[Appendix~A]{MR2095575}. The {\em pivotal trace} of $f: V \to V$ with respect to $j$ is given and denoted by $\ptrace_j(f) = \ctrace(j_V \circ f)$. The pivotal trace of $\id_V$ is called the {\em pivotal dimension} of $V$ and denoted by $\pdim_j(V)$. The subscript $j$ of $\ptrace_j$ and of $\pdim_j$ are often omitted if $j$ is understood.

A {\em pivotal category} is a left rigid monoidal category equipped with a pivotal structure. Let $F: \mathcal{C} \to \mathcal{D}$ be a monoidal functor between pivotal categories that preserves the pivotal structures and the unit objects. Then,
\begin{equation}
  \label{eq:ptrace-1}
  \ptrace(F(f)) = F(\ptrace(f))
\end{equation}
for every morphism $f: V \to V$ in $\mathcal{C}$. See, for example, \cite[\S 6]{MR2381536} for the proof.

For a monoidal category $\mathcal{C}$, we denote by $\mathcal{C}^{\rm rev}$ the monoidal category with the underlying category $\mathcal{C}$ and the reversed tensor product $\otimes^{\rm rev}$ given by $V \otimes^{\rm rev} W = W \otimes V$. If $\mathcal{C}$ is a pivotal category, $\mathcal{C}^{\rm rev}$ is naturally a pivotal category. We denote by $V^{\rm rev}$ the object $V$ of $\mathcal{C}$ regarded as an object of $\mathcal{C}^{\rm rev}$. For a morphism $f$ in $\mathcal{C}$, $f^{\rm rev}$ has the same meaning. For $f: V \to V$ in $\mathcal{C}$, we have
\begin{equation}
  \label{eq:ptrace-2}
  \ptrace(f^{\rm rev}) = \ptrace(f^*).
\end{equation}
Note that, in general, the pivotal trace of $f$ and of $f^*$ are different. $\mathcal{C}$ is said to be {\em spherical} if $\ptrace(f) = \ptrace(f^*)$ for every $f: V \to V$ in $\mathcal{C}$, see \cite[Definition~2.5]{MR1686423}.

\subsection{Fusion categories}

A {\em fusion category} $\mathcal{C}$ is a linear semisimple rigid monoidal category with finitely many isomorphism classes of simple objects such that the unit object is simple and $\End_\mathcal{C}(V) \cong \mathbb{C}$ for every simple object of $\mathcal{C}$. We denote by $\Irr(\mathcal{C})$ the set of (representatives of) isomorphism classes of simple objects. The Grothendieck ring of $\mathcal{C}$ is denoted by $K_\mathbb{Z}(\mathcal{C})$. Set $K(\mathcal{C}) = K_{\mathbb{Z}}(\mathcal{C}) \otimes_{\mathbb{Z}} \mathbb{C}$.

Let $\mathcal{C}$ be a fusion category. Then, since $\End_\mathcal{C}({\bf 1}) \cong \mathbb{C}$ by definition, we may treat the trace of a morphism as a complex number. We first prove that pivotal dimensions are cyclotomic integers (cf. Corollary~5.15 and Corollary~8.54 of \cite{MR2183279}).

\begin{lemma}
  \label{lem:cyclo-pdim}
  Let $\mathcal{C}$ be a pivotal fusion category. There exists a root of unity $\xi$ such that $\pdim(V) \in \mathbb{Z}[\xi]$ for every $V \in \mathcal{C}$.
\end{lemma}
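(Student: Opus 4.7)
The plan is to promote $\pdim$ to a ring homomorphism $K_\mathbb{Z}(\mathcal{C}) \to \mathbb{C}$, deduce algebraicity of its values from the integer structure of the fusion ring, and finally upgrade to cyclotomicity by following the approach of \cite{MR2183279}. First I would check that, since the pivotal structure $j$ is a monoidal natural isomorphism, we have $j_{V \otimes W} = j_V \otimes j_W$ under the canonical identification $(V \otimes W)^{**} \cong V^{**} \otimes W^{**}$, and $j_{V \oplus W} = j_V \oplus j_W$. Combined with the additivity of $\ctrace$ on direct sums and its multiplicativity on tensor products (equation \eqref{eq:ctrace}, whose hypothesis holds in a fusion category because $\End_\mathcal{C}(\mathbf{1}) \cong \mathbb{C}$ is commutative), this yields $\pdim(V \oplus W) = \pdim(V) + \pdim(W)$ and $\pdim(V \otimes W) = \pdim(V)\pdim(W)$, so $\pdim$ descends to a ring homomorphism on $K_\mathbb{Z}(\mathcal{C})$.

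Next, fix an enumeration $\Irr(\mathcal{C}) = \{V_1, \ldots, V_n\}$. Then $K_\mathbb{Z}(\mathcal{C})$ is a free $\mathbb{Z}$-module of rank $n$ with basis $\{[V_i]\}$, and left multiplication by each $[V_i]$ is represented by a matrix with nonnegative integer entries (the fusion matrix). Its characteristic polynomial is a monic polynomial in $\mathbb{Z}[x]$, and since $\pdim$ is a character of $K_\mathbb{Z}(\mathcal{C})$, $\pdim([V_i])$ is an eigenvalue of this matrix; by $\mathbb{Z}$-linearity it follows that $\pdim(V)$ is an algebraic integer for every $V \in \mathcal{C}$.

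The cyclotomic refinement is the main obstacle. The plan is to follow the argument of \cite[Corollary~8.54]{MR2183279}. The image $R := \pdim(K_\mathbb{Z}(\mathcal{C}))$ is a finitely generated $\mathbb{Z}$-subring of $\mathbb{C}$, so $K := \mathbb{Q}(R)$ is a number field, and what must be shown is that $K \subseteq \mathbb{Q}(\zeta_m)$ for some $m$. The key inputs are the finiteness of the set of ring homomorphisms $K_\mathbb{Z}(\mathcal{C}) \to \overline{\mathbb{Q}}$, on which $\Aut(\overline{\mathbb{Q}}/\mathbb{Q})$ acts by post-composition, together with the extra structure on $K_\mathbb{Z}(\mathcal{C})$ coming from the distinguished basis of simple classes, the duality involution $[V] \mapsto [V^*]$, and the finite-order automorphism induced by the double-dual functor. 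Using these, one argues that the Galois action on $\pdim$ factors through an abelian quotient, so that $K$ is abelian over $\mathbb{Q}$ and hence cyclotomic by Kronecker--Weber. A primitive $m$-th root of unity $\xi$ generating $K$ then satisfies $\pdim(V) \in \mathbb{Z}[\xi]$ for every $V \in \mathcal{C}$, as claimed.
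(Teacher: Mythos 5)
Your proposal follows the paper's proof in essentially the same way: both show that $\pdim$ is a character of $K_\mathbb{Z}(\mathcal{C})$ via additivity and multiplicativity of the pivotal trace, observe that its values are algebraic integers, and then invoke the cyclotomicity result of Etingof--Nikshych--Ostrik (Corollary~8.53 of \cite{MR2183279}) for characters of Grothendieck rings of fusion categories. The only caveat is that your closing sketch of the Galois-theoretic argument inside that citation is too optimistic---abelianness of the Galois action does not follow merely from the based-ring structure, the duality involution, and the double dual, but in \cite{MR2183279} is obtained by passing to the Drinfeld center and exploiting the symmetric $S$-matrix of the resulting modular category---yet since the paper itself simply cites that result, this does not constitute a gap relative to its proof.
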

\begin{proof}
  Equation~(\ref{eq:ctrace}) yields that the assignment $V \mapsto \pdim(V)$ defines a representation of $K_\mathbb{Z}(\mathcal{C})$. It is obvious that $\pdim(V)$ is an algebraic integer. Now the result follows from \cite[Corollary~8.53]{MR2183279}.
\end{proof}

It is conjectured, but not proved, that every fusion category admits a pivotal structure \cite[Conjecture~2.8]{MR2183279}. Instead, we can always define the {\em Frobenius-Perron dimensions} \cite[Section~8]{MR2183279} of objects of fusion categories; That of $V \in \mathcal{C}$ is defined to be the Frobenius-Perron eigenvalue of the left multiplication of $V$ on $K(\mathcal{C})$ and is denoted by $\FPdim(V)$.

Under certain assumptions, a fusion category admits a pivotal structure. Recall that $V^{**} \cong V$ for every $V \in \mathcal{C}$. For a simple object $V$, fix an isomorphism $g: V \to V^{**}$ and set $|V|^2 := \ctrace(g) \ctrace((g^*)^{-1})$. This does not depends on the choice of $g$ and is called the {\em squared norm} of $V$. A fusion category $\mathcal{C}$ is said to be {\em pseudo-unitary} if
\begin{equation*}
  \sum_{V \in \Irr(\mathcal{C})} |V|^2 = \sum_{V \in \Irr(\mathcal{C})} \FPdim(V)^2.
\end{equation*}
If $\mathcal{C}$ is pseudo-unitary, $\mathcal{C}$ has a canonical pivotal structure $j$ such that $\pdim_j(V) = \FPdim(V)$ for every $V \in \mathcal{C}$ \cite[Proposition~8.23]{MR2183279}. Every monoidal equivalence between pseudo-unitary fusion categories preserves the canonical pivotal structures \cite[Corollary~6.2]{MR2381536}. Note that the category $\Rep(H)$ of finite-dimensional representations of a semisimple quasi-Hopf algebra $H$ is pseudo-unitary \cite[Proposition~8.24]{MR2183279}. Hence it has a canonical pivotal structure $j$ such that
\begin{equation}
  \label{eq:dimensions}
  \pdim_j(V) = \FPdim(V) = \dim_{\mathbb{C}}(V)
\end{equation}
for every $V \in \Rep(H)$.

\subsection{Ribbon categories}

Let $\mathcal{C}$ be a left rigid braided monoidal category with braiding $c$. The {\em Drinfeld isomorphism} $u: \id_\mathcal{C} \to (-)^{**}$ is defined by
\begin{equation*}
  u_V = (d_V \otimes \id_{V^{**}}) a_{V^*,V,V^{**}}^{-1}
  (\id_{V^*} \otimes c_{V, V^{**}}^{-1}) a_{V^*, V^{**}, V}^{} (b_{V^*} \otimes \id_V).
\end{equation*}
This satisfies $u_{V \otimes W} = (u_V \otimes u_W) c_{V,W}^{-1} c_{W,V}^{-1}$ for all $V, W \in \mathcal{C}$. A {\em ribbon category} is a left rigid braided monoidal category $\mathcal{C}$ equipped with a {\em twist} \cite[Definition~XIV.3.2]{MR1321145}, that is, an isomorphism $\theta: \id_{\mathcal{C}} \to \id_{\mathcal{C}}$ of functors satisfying
\begin{equation*}
  \theta_{V \otimes W} = (\theta_V \otimes \theta_W) c_{W, V}^{} c_{V, W}^{}
  \quad \text{and} \quad (\theta_V)^* = \theta_{V^*}.
\end{equation*}
The former equation holds if and only if $j = u\theta: \id_\mathcal{C} \to (-)^{**}$ is a pivotal structure on $\mathcal{C}$. If $\theta$ is a twist, this $j$ is a spherical pivotal structure on $\mathcal{C}$. Under the assumption that $\mathcal{C}$ is a fusion category, $j$ is spherical if and only if $\theta$ is a twist.

We denote by $\mathcal{Z}(\mathcal{C})$ the {\em left} Drinfeld center of a monoidal category $\mathcal{C}$. Recall that the objects of $\mathcal{Z}(\mathcal{C})$ are pairs $(V, e_V)$ of an object $V \in \mathcal{C}$ and a half-braiding $e_V: V \otimes (-) \to (-) \otimes V$. M\"uger \cite{MR1966525} showed that $\mathcal{Z}(\mathcal{C})$ is a fusion category if so is $\mathcal{C}$, and then the forgetful functor $\Pi_{\mathcal{C}}: (V, e_V) \mapsto V$ has a two-sided adjoint $I_{\mathcal{C}}: \mathcal{C} \to \mathcal{Z}(\mathcal{C})$. On the objects, we have
\begin{equation}
  \label{eq:adjoint}
  I_{\mathcal{C}}(V) \cong \bigoplus_{(X, e_X) \in \Irr(\mathcal{Z}(\mathcal{C}))} (X, e_X)^{\oplus [X:V]}
\end{equation}
where $[X:V] := \dim_{\mathbb{C}} \Hom_{\mathcal{C}}(X, V)$.

A pivotal structure $j$ on $\mathcal{C}$ naturally induces a pivotal structure $J$ on $\mathcal{Z}(\mathcal{C})$ such that $\ptrace_{J}(f) = \ptrace_j(\Pi_{\mathcal{C}}(f))$ for every endomorphism $f$ in $\mathcal{Z}(\mathcal{C})$. If $j$ is spherical, since $\Pi_{\mathcal{C}}(f^*) = f^*$ for every morphism $f$ in $\mathcal{C}$, also $J$ is spherical. Therefore, the Drinfeld center of a spherical fusion category has a canonical twist, and hence it turns into a ribbon category.

\subsection{Frobenius-Schur indicators}

Frobenius-Schur indicators for pivotal categories are introduced by Ng and Schauenburg in \cite{MR2381536}. Let us briefly recall the definition. First, let $\mathcal{C}$ be a monoidal category with left duality. Then there exist isomorphisms
\begin{equation*}
  A^X_{Y,Z}: \Hom_{\mathcal{C}}(X, Y \otimes Z) \to \Hom_{\mathcal{C}}(Y^* \otimes X, Z)
\end{equation*}
and
\begin{equation*}
  B^{X,Y}_Z: \Hom_{\mathcal{C}}(X \otimes Y, Z) \to \Hom_{\mathcal{C}}(X, Z \otimes Y^*)
\end{equation*}
that are natural in $X$, $Y$ and $Z$ \cite[Proposition~XIV.2.2]{MR1321145}. We set
\begin{equation*}
  D_{V,W} = B^{\mathbf{1}, V^*}_{W} \circ A^{\mathbf{1}}_{V,W}:
  \Hom_{\mathcal{C}}(\mathbf{1}, V \otimes W) \to \Hom_{\mathcal{C}}(\mathbf{1}, W \otimes V^{**})
\end{equation*}
for $V, W \in \mathcal{C}$. For an object $V \in \mathcal{C}$, define $V^{\otimes n} \in \mathcal{C}$ inductively by $V^{\otimes 0} = \mathbf{1}$, $V^{\otimes 1} = V$ and $V^{\otimes n} = V \otimes V^{\otimes (n-1)}$ for $n \ge 2$. Now we assume $\mathcal{C}$ to be a linear pivotal category (with finite-dimensional $\Hom$-spaces). Then linear automorphisms
\begin{equation*}
  E_V^{(n)}: \Hom_{\mathcal{C}}(\mathbf{1}, V^{\otimes n}) \to \Hom_{\mathcal{C}}(\mathbf{1}, V^{\otimes n})
  \quad (n = 1, 2, \cdots)
\end{equation*}
are defined by
\begin{equation*}
  E_V^{(n)}(f) = a^{(n)}_V \circ \left(\id_{V^{\otimes(n-1)}} \otimes j_V^{-1} \right) \circ D_{V, V^{\otimes (n - 1)}}(f),
\end{equation*}
where $a_V^{(n)}: V^{\otimes (n - 1)} \otimes V \to V^{\otimes n}$ is the unique isomorphism obtained by composing tensor products of associativity and identity morphisms. Explicitly, it is given inductively by
\begin{equation}
  \label{eq:FS-assoc}
  a_V^{(1)} = \id_V, \quad
  a_V^{(n)} = (\id_V \otimes a_V^{(n - 1)}) \circ a_{V, V^{\otimes (n - 2)}, V}
  \quad (n \ge 2).
\end{equation}
The {\em $n$-th Frobenius-Schur indicator of $V$} is given and denoted by
\begin{equation}
  \label{eq:FS-pivotal}
  \nu_n(V) = \Trace \left(E_V^{(n)}\right)
\end{equation}
where $\Trace$ means the usual trace of linear maps.

Ng and Schauenburg showed that the above formula is a generalization of~(\ref{eq:FS-Hopf}), that is, the right-hand side of~(\ref{eq:FS-pivotal}) is equal to the right-hand side of~(\ref{eq:FS-Hopf}) if $V \in \Rep(H)$ for some semisimple Hopf algebra $H$ (see \cite[Remark~4.3]{MR2366965}). They also generalized the ``third formula'' of Kashina, Sommerh\"auser and Zhu \cite[\S 6.4, Corollary]{MR2213320} to objects of spherical fusion category: If $\mathcal{C}$ is a spherical fusion category, we have
\begin{equation}
  \label{eq:FS-third}
  \nu_n(V) = \frac{1}{\dim(\mathcal{C})} \ptrace\left(\theta^n_{I_{\mathcal{C}}(V)}\right) \quad (V \in \mathcal{C})
\end{equation}
where $\theta$ is the canonical twist of $\mathcal{Z}(\mathcal{C})$ and $\dim(\mathcal{C}) = \sum_{V \in \Irr(\mathcal{C})} |V|^2$ (see \cite[Theorem~4.1]{MR2313527}). This formula plays an important role in Section~\ref{sec:regular}.

One might think that the definition of $E_V^{(n)}$ is slight complicated. It should be understood in terms of graphical calculus \cite[\S 5]{MR2381536}. If we did so, it would be obvious that the $n$-th power of $E_V^{(n)}$ is equal to the identity. The following lemma follows immediately this observation (see \cite[Theorem~5.1]{MR2381536}).

\begin{lemma}
  \label{lem:cyclo-FS}
  $\nu_n(V) \in \mathbb{Z}[\zeta_n]$ where $\zeta_n$ is a primitive $n$-th root of unity.
\end{lemma}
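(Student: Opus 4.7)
The plan is to show that the linear automorphism $E_V^{(n)}$ has finite order dividing $n$, i.e. $(E_V^{(n)})^n = \id$. Once this is established, $E_V^{(n)}$ is diagonalizable over $\mathbb{C}$ with all eigenvalues lying in the set of $n$-th roots of unity. Consequently
\begin{equation*}
  \nu_n(V) = \Trace\bigl(E_V^{(n)}\bigr) = \sum_{i} \lambda_i, \qquad \lambda_i^n = 1,
\end{equation*}
and any $\mathbb{Z}$-linear combination of $n$-th roots of unity lies in $\mathbb{Z}[\zeta_n]$, giving the claim.

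So the whole argument reduces to verifying $(E_V^{(n)})^n = \id$. To this end I would use the graphical calculus for pivotal categories outlined in \cite[\S 5]{MR2381536}. Drawing a morphism $f \in \Hom_{\mathcal{C}}(\mathbf{1}, V^{\otimes n})$ as a coupon with $n$ upward $V$-strands, the combined operation $D_{V,V^{\otimes(n-1)}}$ followed by the associator $a_V^{(n)}$ and the $\id \otimes j_V^{-1}$ correction amounts, diagrammatically, to bending the leftmost strand up, around the top, and back down on the right, yielding a cyclic rotation of the $n$ output strands by one position. The pivotal structure $j$, together with the identity $j_V^* = j_{V^*}^{-1}$ recalled in the preliminaries, is precisely what is needed so that after completing the rotation the resulting diagram still represents a morphism $\mathbf{1} \to V^{\otimes n}$ (rather than one into a twisted double dual). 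Iterating this rotation $n$ times restores the original diagram, so $(E_V^{(n)})^n = \id$.

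Alternatively, I would cite \cite[Theorem~5.1]{MR2381536}, which establishes exactly this cyclic property of $E_V^{(n)}$ in the stated generality; the lemma then follows as a formal consequence, as sketched above. Either way, the actual mathematics is the rotational symmetry of the pivotal structure; the linear-algebra conclusion is automatic.

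The main obstacle is the rigorous verification that the map defined algebraically through $A^{\mathbf{1}}_{V,W}$, $B^{\mathbf{1},V^*}_W$, the associator $a_V^{(n)}$, and $j_V^{-1}$ really does coincide with the cyclic rotation described graphically. This is a bookkeeping exercise with coherence isomorphisms and the compatibility of $j$ with $(-)^{**}$, and it is the reason the paper invokes the graphical framework: once translated into diagrams the identity $(E_V^{(n)})^n = \id$ becomes visually evident, but a purely formula-based verification requires a somewhat delicate manipulation of duality morphisms and associativity constraints.
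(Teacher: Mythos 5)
Your proposal is correct and follows exactly the paper's route: the paper likewise reduces the claim to the identity $(E_V^{(n)})^n = \id$, justified via the graphical calculus and the citation of \cite[Theorem~5.1]{MR2381536}, and then concludes that the trace is a sum of $n$-th roots of unity. The linear-algebra deduction you spell out (diagonalizability and eigenvalues in $\mu_n$) is the same implicit final step.
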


The definition of Frobenius-Schur indicators depends on the choice of pivotal structures. Let $\mathcal{C}$ be a pivotal fusion category with pivotal structure $j$ and let $j'$ be another pivotal structure on $\mathcal{C}$. For a while, we denote by $\nu'_n$ the $n$-th Frobenius-Schur indicator with respect to $j'$. If $V$ is a simple object of $\mathcal{C}$, then there exists $\lambda_V \in \mathbb{C}^{\times}$ such that $j'_V = \lambda_V j_V$. As remarked in \cite[Remark~5.2]{MR2313527}, we have
\begin{equation*}
  \nu'_n(V) = \lambda_V^{-1} \nu_n(V).
\end{equation*}
and, on the other hand, $\pdim_{j'}(V) = \lambda_V \pdim_{j}(V)$. Hence we have the following:

\begin{lemma}
  \label{lem:FS-invariance}
  Let $\mathcal{C}$ be a fusion category that admits a pivotal structure. If $V \in \mathcal{C}$ is a simple object, the value $\widetilde{\nu}_n(V) = \nu_n(V) \pdim(V)$ does not depend on the choice of pivotal structures on $\mathcal{C}$.
\end{lemma}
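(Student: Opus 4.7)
The plan is to derive this immediately from the two scaling formulas stated in the paragraph preceding the lemma. Since $\mathcal{C}$ is fusion and $V$ is simple, $\End_{\mathcal{C}}(V) \cong \mathbb{C}$, so any two pivotal structures $j, j'$ differ on $V$ by a nonzero scalar $\lambda_V \in \mathbb{C}^{\times}$, i.e.\ $j'_V = \lambda_V j_V$. I would then quote the two identities already recorded above, namely
\begin{equation*}
  \nu'_n(V) = \lambda_V^{-1} \nu_n(V)
  \quad \text{and} \quad
  \pdim_{j'}(V) = \lambda_V \pdim_j(V),
\end{equation*}
where primed quantities refer to $j'$ and unprimed to $j$.

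Multiplying these together, the $\lambda_V$ factors cancel, giving
\begin{equation*}
  \nu'_n(V) \cdot \pdim_{j'}(V) = \nu_n(V) \cdot \pdim_j(V),
\end{equation*}
which is exactly the claim that $\widetilde{\nu}_n(V)$ is independent of the chosen pivotal structure. There is no real obstacle here: the lemma is essentially an observation, and the only thing to check is that the two cited formulas are being applied under the same normalization conventions (in particular, that the $\lambda_V$ appearing in the formula for $\nu'_n$ is the same $\lambda_V$ appearing in the formula for $\pdim_{j'}$), which is immediate from how both are defined via $j'_V = \lambda_V j_V$. So the proof really is a one-line cancellation following the setup sentence identifying $\lambda_V$.
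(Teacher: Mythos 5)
Your proposal is correct and is exactly the argument the paper intends: the lemma is stated as an immediate consequence of the two scaling identities $\nu'_n(V) = \lambda_V^{-1}\nu_n(V)$ and $\pdim_{j'}(V) = \lambda_V \pdim_j(V)$ recorded in the preceding paragraph, and the product cancels the scalar $\lambda_V$. Nothing further is needed.
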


\begin{remark}
  \label{rem:FS-invariance}
  Let $\mathcal{C}$ be a linear monoidal category with left duality. Assume that $V \in \mathcal{C}$ is isomorphic to $V^{**}$. Then, we fix an isomorphism $g: V \to V^{**}$ and define linear automorphisms
  \begin{equation*}
    E_{V, g}^{(n)}: \Hom_{\mathcal{C}}(\mathbf{1}, V^{\otimes n}) \to \Hom_{\mathcal{C}}(\mathbf{1}, V^{\otimes n})
    \quad (n = 1, 2, \cdots)
  \end{equation*}
  by the same formula as $E_V^{(n)}$ but with $j_V$ replaced by $g$. We define $\widetilde{\nu}_n(V)$ by
  \begin{equation*}
    \widetilde{\nu}_n(V) = \Trace(E_{V, g}^{(n)}) \ctrace(g).
  \end{equation*}
  analogously. If $\End_{\mathcal{C}}(V) \cong \mathbb{C}$, the right-hand side does not depends on the choice of $g: V \to V^{**}$. In particular, if $V$ is a simple object of a fusion category, $\widetilde{\nu}_n(V)$ can be well-defined.
\end{remark}

\subsection{Indicators of invertible objects}

An object $V$ of a monoidal category $\mathcal{C}$ is said to be {\em invertible} if the functor $V \otimes (-)$ is an equivalence. In this subsection, we introduce some formulae for Frobenius-Schur indicators of invertible objects.

Let $V$ be an invertible object of a pivotal fusion category $\mathcal{C}$. Then, by induction on $n$, we see that $V^{\otimes n}$ is a simple object for every $n$. Therefore,
\begin{equation*}
  \dim_{\mathbb{C}} \Hom_{\mathcal{C}}(\mathbf{1}, V^{\otimes n}) =
  \begin{cases}
    1 & \text{if $V^{\otimes n} \cong \mathbf{1}$}, \\
    0 & \text{otherwise}.
  \end{cases}
\end{equation*}
In particular, $\nu_n(V) = 0$ unless $V^{\otimes n} \cong \mathbf{1}$. If $V^{\otimes n} \cong \mathbf{1}$, we have
\begin{equation*}
  \Hom_{\mathcal{C}}(V^*, V^{\otimes (n - 1)}) \cong \Hom_{\mathcal{C}}(\mathbf{1}, V^{\otimes n}) \cong \Hom_{\mathcal{C}}(\mathbf{1}, \mathbf{1}) \cong \mathbb{C}.
\end{equation*}
Since $V^*$ is simple, this implies that $V^*$ is isomorphic to $V^{\otimes (n - 1)}$.

\begin{theorem}
  \label{thm:FS-invertible}
  Let $\mathcal{C}$ be a pivotal fusion category and $V \in \mathcal{C}$ an invertible object such that $V^{\otimes n} \cong \mathbf{1}$. Fix an isomorphism $\lambda: V^* \to V^{\otimes (n - 1)}$. Then, we have
  \begin{equation*}
    \nu_n(V) = \pdim(V^*) \cdot
    \left(d_V (\lambda^{-1} \otimes \id_V) \circ (a_V^{(n)})^{-1} \circ (\id_V \otimes \lambda) b_V: \mathbf{1} \to \mathbf{1} \right)^{-1}
  \end{equation*}
  where $a_V^{(n)}: V^{\otimes (n - 1)} \otimes V \to V^{\otimes n}$ is the isomorphism given by~(\ref{eq:FS-assoc}).
\end{theorem}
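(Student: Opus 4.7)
The plan is to exploit the fact that $V^{\otimes n} \cong \mathbf{1}$ makes $\Hom_{\mathcal{C}}(\mathbf{1}, V^{\otimes n})$ one-dimensional (since $\mathbf{1}$ is simple), so $E_V^{(n)}$ is just multiplication by a scalar, namely $\nu_n(V)$. The strategy is to evaluate both $E_V^{(n)}(f)$ and $f$ explicitly for a well-chosen generator $f$, read off the scalar by composing with an appropriate evaluation-like morphism, and recognize the leftover factor as $\pdim(V^*)$.

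First I would take $f = (\id_V \otimes \lambda) \circ b_V \colon \mathbf{1} \to V \otimes V^{\otimes(n-1)}$ as a nonzero element of $\Hom_{\mathcal{C}}(\mathbf{1}, V^{\otimes n})$. Then I would unwind the definition of $E_V^{(n)}$: applying $A^{\mathbf{1}}_{V, V^{\otimes(n-1)}}$ to $f$ and using the zig-zag identity $(d_V \otimes \id_{V^*}) \circ a^{-1}_{V^*, V, V^*} \circ (\id_{V^*} \otimes b_V) = \id_{V^*}$ (together with naturality of the associator with respect to $\id \otimes \id \otimes \lambda$), the computation collapses to $\lambda \colon V^* \to V^{\otimes(n-1)}$. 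Consequently $D_{V, V^{\otimes(n-1)}}(f) = (\lambda \otimes \id_{V^{**}}) \circ b_{V^*}$, and composing with $\id \otimes j_V^{-1}$ and $a_V^{(n)}$ yields
\begin{equation*}
  E_V^{(n)}(f) \; = \; a_V^{(n)} \circ (\lambda \otimes j_V^{-1}) \circ b_{V^*}.
\end{equation*}

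Since $E_V^{(n)}(f) = \nu_n(V) \cdot f$, I would pre-compose this identity with $(a_V^{(n)})^{-1}$ and then post-compose with $d_V \circ (\lambda^{-1} \otimes \id_V)$ to turn both sides into scalars in $\End_{\mathcal{C}}(\mathbf{1}) \cong \mathbb{C}$. The left-hand side becomes exactly the scalar appearing in the theorem statement times $\nu_n(V)$, while the right-hand side simplifies (after cancelling $\lambda^{-1} \circ \lambda$) to $d_V \circ (\id_{V^*} \otimes j_V^{-1}) \circ b_{V^*}$.

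The last step, and the one requiring a little care, is identifying that residual scalar with $\pdim(V^*)$. For this I would invoke the universal property of the dual morphism: for $h \colon V^{**} \to V$,
\begin{equation*}
  d_V \circ (\id_{V^*} \otimes h) \; = \; d_{V^{**}} \circ (h^* \otimes \id_{V^{**}}).
\end{equation*}
Specialising to $h = j_V^{-1}$ and using the identity $j_{V^*} = (j_V^*)^{-1} = (j_V^{-1})^*$ recalled in the preliminaries, the residual scalar equals $d_{V^{**}} \circ (j_{V^*} \otimes \id_{V^{**}}) \circ b_{V^*} = \ctrace(j_{V^*}) = \pdim(V^*)$. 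Solving for $\nu_n(V)$ then gives the claimed formula. I expect the main (small) obstacle to be bookkeeping with the associators in the $A$ and $B$ maps, and verifying that one genuinely ends up with $(a_V^{(n)})^{-1}$ rather than a shifted associator in the final expression; the rest is zig-zag calculus and the transpose identity above.
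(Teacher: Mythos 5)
Your proposal is correct and follows essentially the same route as the paper: evaluate $E_V^{(n)}$ on the generator $(\id_V\otimes\lambda)\circ b_V$ of the one-dimensional space $\Hom_{\mathcal{C}}(\mathbf{1},V^{\otimes n})$, obtain $D_{V,V^{\otimes(n-1)}}$ of it as $(\lambda\otimes\id_{V^{**}})\circ b_{V^*}$, and extract the scalar by composing with $d_V\circ(\lambda^{-1}\otimes\id_V)\circ(a_V^{(n)})^{-1}$. Your last step, justifying $d_V\circ(\id_{V^*}\otimes j_V^{-1})\circ b_{V^*}=\pdim(V^*)$ via the transpose identity and $j_{V^*}=(j_V^*)^{-1}$, correctly supplies a detail the paper merely asserts.
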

\begin{proof}
  Set $p = d_V (\lambda^{-1} \otimes \id_V) (a_V^{(n)})^{-1}: V \otimes X \to \mathbf{1}$ and $q = (\id_V \otimes \lambda) b_V: \mathbf{1} \to V \otimes X$ where $X = V^{\otimes (n - 1)}$. One can easily see that $p$ and $q$ are isomorphisms, and hence so is $p \circ q$. Consider the commutative diagram
  \begin{equation*}
    \begin{CD}
      \Hom_{\mathcal{C}}(\mathbf{1}, V \otimes V^*)
      @>{\Hom_{\mathcal{C}}(\mathbf{1}, \id_V \otimes \lambda)}>>
      \Hom_{\mathcal{C}}(\mathbf{1}, V \otimes X) \\
      @V{D_{V,V^*}}VV @VV{D_{V,X}}V \\
      \Hom_{\mathcal{C}}(\mathbf{1}, V^* \otimes V^{**})
      @>>{\Hom_{\mathcal{C}}(\mathbf{1}, \lambda \otimes \id_{V^{**}})}>
      \Hom_{\mathcal{C}}(\mathbf{1}, X \otimes V^{**}).
    \end{CD}
  \end{equation*}
  By chasing $b_V$ in the diagram, we have $D_{V,X}(q) = (\lambda \otimes \id_{V^{**}}) \circ b_{V^*}$. Hence
  \begin{equation*}
    p \circ E_V^{(n)}(q) = d_V(\lambda^{-1} \otimes j_V^{-1})D_{V,X}(q) = d_V(\id_{V^*} \otimes j_V^{-1}) b_V = \pdim(V^*).
  \end{equation*}
  Since $\dim_{\mathbb{C}} \Hom_{\mathcal{C}}(\mathbf{1}, V^{\otimes n}) = 1$, we have $p \circ E_V^{(n)}(q) = \nu_n(V) \cdot p \circ q$. This implies that $\nu_n(V) = \pdim(V^*) \cdot (p \circ q)^{-1}$.
\end{proof}

The following is an immediate consequence of Theorem~\ref{thm:FS-invertible}. We use the same notations as in \cite[XV.5]{MR1321145} for quasi-Hopf algebras.

\begin{corollary}
  \label{cor:FS-invertible-1}
  Let $H = (H, \Delta, \varepsilon, \Phi, S, \alpha, \beta)$ be a semisimple quasi-Hopf algebra and $V$ a one-di\-men\-sion\-al representation of $H$ with character $\chi$. Then we have
  \begin{equation*}
    \nu_n(V) = \frac{\delta_{\chi^n, \varepsilon}}{\chi(\alpha)\chi(\beta)}
    \prod_{k = 1}^{n - 2} \langle \chi \otimes \chi^k \otimes \chi, \Phi \rangle
  \end{equation*}
  with respect to the canonical pivotal structure on $\Rep(H)$.
\end{corollary}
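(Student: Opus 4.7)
The plan is to apply Theorem~\ref{thm:FS-invertible} directly, exploiting that one-dimensional modules over $H$ are invertible in $\Rep(H)$. Since $V^{\otimes n}$ has character $\chi^n$, we have $V^{\otimes n} \cong \mathbf{1}$ if and only if $\chi^n = \varepsilon$. In the case $\chi^n \ne \varepsilon$, the observation immediately preceding Theorem~\ref{thm:FS-invertible} yields $\nu_n(V) = 0$, which matches the Kronecker factor $\delta_{\chi^n, \varepsilon}$. So the interesting case is $\chi^n = \varepsilon$, to which I will apply the theorem with the canonical pivotal structure; by~(\ref{eq:dimensions}) we have $\pdim(V^*) = \dim_{\mathbb{C}}(V^*) = 1$.

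Fix a basis vector $v \in V$ and its dual $v^* \in V^*$. Recalling that the duality in $\Rep(H)$ is given by $b_V(1) = \sum_i \beta \cdot v_i \otimes v^i$ and $d_V(f \otimes w) = f(\alpha \cdot w)$ (cf.\ the conventions of \cite{MR1321145}, XV.5), I get $b_V(1) = \chi(\beta)\, v \otimes v^*$ and $d_V(v^* \otimes v) = \chi(\alpha)$. Since $V^{\otimes(n-1)}$ is one-dimensional with character $\chi^{n-1}$, which equals the character $\chi^{n-1}$ of $V^*$ when $\chi^n = \varepsilon$, I take $\lambda : V^* \to V^{\otimes(n-1)}$ to be the isomorphism sending $v^*$ to $v^{\otimes(n-1)}$.

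It remains to compute the scalar by which the associator $a_V^{(n)} \colon V^{\otimes(n-1)} \otimes V \to V^{\otimes n}$ acts on the obvious basis vectors. In $\Rep(H)$, an associator $a_{X,Y,Z}$ is given by the action of $\Phi$; hence $a_{V, V^{\otimes(n-2)}, V}$ contributes the scalar $\langle \chi \otimes \chi^{n-2} \otimes \chi, \Phi\rangle$. Using the recursion~(\ref{eq:FS-assoc}) together with the fact that $\id_V \otimes a_V^{(n-1)}$ acts by the same scalar as $a_V^{(n-1)}$ on one-dimensional factors, a straightforward induction gives
\[
a_V^{(n)} \;=\; \prod_{k=1}^{n-2} \langle \chi \otimes \chi^k \otimes \chi, \Phi\rangle
\]
as a scalar (with the empty product convention for $n \le 2$).

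Plugging these pieces into Theorem~\ref{thm:FS-invertible}, the chosen $\lambda$ and $\lambda^{-1}$ cancel, $b_V$ and $d_V$ contribute the factor $\chi(\alpha)\chi(\beta)$, and $(a_V^{(n)})^{-1}$ contributes the inverse of the product above; taking the reciprocal (and multiplying by $\pdim(V^*) = 1$) reproduces the claimed formula. The one substantive thing to verify is the inductive identification of $a_V^{(n)}$ as a product of pairings with $\Phi$; apart from that, the proof is routine bookkeeping of scalars across nested tensor products, which is the main place where one has to be careful.
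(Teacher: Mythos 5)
Your proposal is correct and follows the same route as the paper, which presents the corollary as an immediate consequence of Theorem~\ref{thm:FS-invertible} using the standard quasi-Hopf duality morphisms of \cite[XV.5]{MR1321145}; you have simply written out the scalar bookkeeping (the $\chi(\alpha)\chi(\beta)$ from $d_V$ and $b_V$, the trivial contribution of $\lambda$, and the inductive identification of $a_V^{(n)}$ with $\prod_{k=1}^{n-2}\langle\chi\otimes\chi^k\otimes\chi,\Phi\rangle$) that the paper leaves implicit. The verification is sound, including the reduction to the case $\chi^n=\varepsilon$ via the remark preceding the theorem and the use of $\pdim(V^*)=1$ from~(\ref{eq:dimensions}).
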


For a finite group $\Gamma$ and a normalized 3-cocycle $\omega \in Z^3(\Gamma, \mathbb{C}^\times)$, let $\Vect_\omega^\Gamma$ denote the category of finite-dimensional $\Gamma$-graded vector spaces with associator given by $\omega$. This category is monoidally equivalent to $\Rep(\mathbb{C}^\Gamma_\omega)$ where $\mathbb{C}^\Gamma_\omega$ is a semisimple quasi-Hopf algebra defined as follows: $\mathbb{C}^\Gamma_\omega$ has same algebra structure, comultiplication, counit and antipode as $\mathbb{C}^\Gamma$, the dual of Hopf algebra of $\mathbb{C}\Gamma$. $\Phi$, $\alpha$ and $\beta$ are given respectively by
\begin{equation*}
  \Phi = \sum_{x, y, z \in \Gamma} \omega(x, y, z) e_x \otimes e_y \otimes e_z, \quad
  \alpha = 1 \quad \text{and} \quad
  \beta = \sum_{g \in \Gamma} \omega(g, g^{-1}, g)^{-1} e_g
\end{equation*}
where $\{ e_g \}_{g \in \Gamma}$ is the dual basis of $\{ g \}_{g \in \Gamma}$. For $g \in \Gamma$, let $E_g$ denote a one-di\-men\-sion\-al $\Gamma$-graded vector space having a non-zero $g$-component. By Corollary~\ref{cor:FS-invertible-1}, we have the following formula due to Ng and Schauenburg \cite[Proposition~7.1]{MR2366965}.

\begin{corollary}
  \label{cor:FS-invertible-2}
  With respect to the canonical pivotal structure, we have
  \begin{equation*}
    \nu_n(E_g) = \delta_{g^n, 1} \prod_{k = 1}^{n - 1} \omega(g, g^k, g).
  \end{equation*}
\end{corollary}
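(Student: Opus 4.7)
The plan is to apply Corollary~\ref{cor:FS-invertible-1} directly to the semisimple quasi-Hopf algebra $H = \mathbb{C}^\Gamma_\omega$ and its one-dimensional representation $V = E_g$, since $\Vect_\omega^\Gamma$ is monoidally equivalent to $\Rep(\mathbb{C}^\Gamma_\omega)$ and this equivalence preserves canonical pivotal structures (both categories are pseudo-unitary). First I would identify the character $\chi$ of $E_g$: because $e_h \in \mathbb{C}^\Gamma$ acts on the one-dimensional $g$-component by the scalar $\delta_{g,h}$, the character is $\chi(e_h) = \delta_{g,h}$. Since the coalgebra structure is that of $\mathbb{C}^\Gamma$ and hence $E_g \otimes E_h \cong E_{gh}$ as representations, we have $\chi^k = \chi_{g^k}$, i.e.\ $\chi^k(e_h) = \delta_{g^k, h}$.

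Next I would evaluate each ingredient appearing in Corollary~\ref{cor:FS-invertible-1}. Since $\varepsilon(e_h) = \delta_{h,1}$, we get $\chi^n = \varepsilon$ iff $g^n = 1$, giving $\delta_{\chi^n,\varepsilon} = \delta_{g^n,1}$. Because $\alpha = 1 = \sum_{h} e_h$, we have $\chi(\alpha) = 1$. Because $\beta = \sum_{h} \omega(h, h^{-1}, h)^{-1} e_h$, we have $\chi(\beta) = \omega(g, g^{-1}, g)^{-1}$. Finally, pairing with $\Phi = \sum_{x,y,z} \omega(x,y,z)\, e_x \otimes e_y \otimes e_z$ picks out the unique nonzero term at $(x,y,z) = (g, g^k, g)$, so
\[
\langle \chi \otimes \chi^k \otimes \chi, \Phi\rangle = \omega(g, g^k, g).
\]

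Substituting these into the formula of Corollary~\ref{cor:FS-invertible-1} yields
\[
\nu_n(E_g) = \delta_{g^n, 1} \cdot \omega(g, g^{-1}, g) \cdot \prod_{k=1}^{n-2} \omega(g, g^k, g).
\]
The last step, which is the only bookkeeping point one needs to notice, is to fold the leading factor coming from $\chi(\beta)^{-1}$ into the product: under the hypothesis $g^n = 1$ (outside which the indicator vanishes), one has $g^{n-1} = g^{-1}$, so $\omega(g, g^{-1}, g) = \omega(g, g^{n-1}, g)$ is exactly the missing $k = n-1$ factor, and the product extends to $\prod_{k=1}^{n-1} \omega(g, g^k, g)$, giving the asserted formula. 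No serious obstacle is anticipated; the proof is essentially a direct specialization, with the only subtlety being the re-indexing that reconciles the quasi-Hopf formula with the symmetric form $\prod_{k=1}^{n-1}$ favored in Ng--Schauenburg.
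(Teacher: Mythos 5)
Your proposal is correct and follows exactly the route the paper intends: the paper derives Corollary~\ref{cor:FS-invertible-2} as an immediate specialization of Corollary~\ref{cor:FS-invertible-1} to $H=\mathbb{C}^\Gamma_\omega$ and $V=E_g$, and your computation of $\chi(\alpha)$, $\chi(\beta)$, and $\langle\chi\otimes\chi^k\otimes\chi,\Phi\rangle$, together with the re-indexing $\omega(g,g^{-1},g)=\omega(g,g^{n-1},g)$ when $g^n=1$, is precisely the bookkeeping the paper leaves implicit.
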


\section{Indicators of the regular representation}
\label{sec:regular}

Let $\mathcal{C}$ be a pivotal fusion category. For a positive integer $n$, set
\begin{equation}
  \label{eq:FS-def}
  \nu_n(\mathcal{C}) = \sum_{V \in \Irr(\mathcal{C})} \nu_n(V) \pdim(V).
\end{equation}
As we remarked in Remark~\ref{rem:FS-invariance}, the right-hand side can be defined for arbitrary fusion category $\mathcal{C}$. However, we always assume that $\mathcal{C}$ has a pivotal structure since examples of non-pivotal categories are not known. In fact, examples we deal with in this paper are all spherical.

If $\mathcal{C}$ is $\Rep(H)$ for some semisimple quasi-Hopf algebra $H$ (with the canonical pivotal structure), then $\nu_n(\mathcal{C})$ is equal to $\nu_n(H)$, the $n$-th Frobenius-Schur indicator of the regular representation of $H$. Indeed, it follows from Artin-Wedderburn theorem and the additivity of Frobenius-Schur indicators \cite[Corollary~7.8]{MR2381536} that
\begin{equation*}
  \nu_n(H) = \sum_{V \in \Irr(\mathcal{C})} \nu_n(V) \dim_{\mathbb{C}}(V).
\end{equation*}
By equation~(\ref{eq:dimensions}), we have $\nu_n(H) = \nu_n(\Rep(H))$.

We first prove that the assignment $\mathcal{C} \mapsto \nu_n(\mathcal{C})$ is an invariant of fusion categories admitting a pivotal structure.

\begin{theorem}
  \label{thm:FS-invariance}
  Let $\mathcal{C}$ and $\mathcal{D}$ be pivotal fusion categories. If $\mathcal{C}$ and $\mathcal{D}$ are monoidally equivalent, we have $\nu_n(\mathcal{C}) = \nu_n(\mathcal{D})$ for every $n$.
\end{theorem}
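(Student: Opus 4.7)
The plan is to reduce to the case where the given equivalence actually preserves the pivotal structures, and then observe that such a functor preserves every ingredient in the definition of $\nu_n(V)$ and of $\pdim(V)$. The bridge between an arbitrary monoidal equivalence and a pivotal one is supplied by Lemma~\ref{lem:FS-invariance}: although $\nu_n(V)$ by itself is not invariant under rescaling the pivotal structure, the product $\widetilde{\nu}_n(V) = \nu_n(V) \pdim(V)$ is. Since
\begin{equation*}
  \nu_n(\mathcal{C}) = \sum_{V \in \Irr(\mathcal{C})} \widetilde{\nu}_n(V),
\end{equation*}
the quantity $\nu_n(\mathcal{C})$ is independent of the choice of pivotal structure on $\mathcal{C}$, and we are therefore free to replace the given one by any other.

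Next I would construct that replacement. Let $F \colon \mathcal{C} \to \mathcal{D}$ be a monoidal equivalence; we may assume $F(\mathbf{1}) = \mathbf{1}$ on the nose. A monoidal functor canonically identifies left duals, giving a natural isomorphism $\phi_V \colon F(V^*) \to F(V)^*$ intertwining the duality morphisms, and hence a natural monoidal isomorphism $\psi_V \colon F(V^{**}) \to F(V)^{**}$. Using $\psi$, pull the pivotal structure $j^{\mathcal{D}}$ of $\mathcal{D}$ back to a pivotal structure $j'$ on $\mathcal{C}$ characterized by $F(j'_V) = \psi_V^{-1} \circ j^{\mathcal{D}}_{F(V)}$. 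By construction $F$ now preserves pivotal structures in the sense required by~(\ref{eq:ptrace-1}).

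With $\mathcal{C}$ equipped with $j'$, the automorphism $E_V^{(n)}$ is built entirely from the monoidal constraints, duality morphisms and the pivotal structure, all of which $F$ preserves; transporting $E_V^{(n)}$ along the natural isomorphism $\Hom_{\mathcal{C}}(\mathbf{1}, V^{\otimes n}) \cong \Hom_{\mathcal{D}}(\mathbf{1}, F(V)^{\otimes n})$ induced by $F$ therefore yields $E_{F(V)}^{(n)}$, so $\nu_n(V) = \nu_n(F(V))$. Moreover, by~(\ref{eq:ptrace-1}), $\pdim_{j'}(V) = \pdim_{j^{\mathcal{D}}}(F(V))$. Combining these, $\widetilde{\nu}_n(V) = \widetilde{\nu}_n(F(V))$ for every simple $V \in \mathcal{C}$; summing over the bijection $\Irr(\mathcal{C}) \to \Irr(\mathcal{D})$ induced by $F$ gives $\nu_n(\mathcal{C}) = \nu_n(\mathcal{D})$.

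The main obstacle is entirely absorbed into Lemma~\ref{lem:FS-invariance}: a monoidal equivalence need not preserve any preassigned pivotal structure, and rescaling the pivotal structure changes individual indicators $\nu_n(V)$, so without the invariance of $\widetilde{\nu}_n(V)$ one cannot even begin. Granted that lemma, the rest is a routine unwinding of the definitions of $E_V^{(n)}$ and $\pdim$ together with the standard fact that a monoidal functor canonically identifies left duals.
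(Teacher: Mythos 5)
Your proposal is correct and follows essentially the same route as the paper: invoke Lemma~\ref{lem:FS-invariance} to gain the freedom to change the pivotal structure so that the equivalence becomes pivotal (the paper adjusts $\mathcal{D}$ where you adjust $\mathcal{C}$, an immaterial difference), then use invariance of $\nu_n(V)$ and $\pdim(V)$ under pivotal equivalences and sum over the induced bijection of simple objects. The only cosmetic difference is that you sketch the invariance of $\nu_n(V)$ directly from the construction of $E_V^{(n)}$, whereas the paper cites Corollary~4.4 of Ng--Schauenburg for it.
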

\begin{proof}
  Let $F: \mathcal{C} \to \mathcal{D}$ be a monoidal equivalence. By Lemma~\ref{lem:FS-invariance}, $\nu_n(\mathcal{C})$ and $\nu_n(\mathcal{D})$ do not depend on the choice of pivotal structures. Hence we may assume that $F$ preserves the pivotal structures by changing pivotal structure of $\mathcal{D}$. Then, for every $V \in \Irr(\mathcal{C})$, we have
  \begin{equation*}
    \nu_n(F(V)) = \nu_n(V) \quad \text{and} \quad
    \pdim(F(V)) = \pdim(V)
  \end{equation*}
  by \cite[Corollary~4.4]{MR2381536} and~(\ref{eq:ptrace-1}). Now the proof is obvious.
\end{proof}

Let $\mathcal{C}$ and $\mathcal{D}$ be pivotal fusion categories. Then the exterior product $\mathcal{C} \boxtimes \mathcal{D}$ is naturally a pivotal fusion category. Recall that the objects of $\mathcal{C} \boxtimes \mathcal{D}$ are finite sums of the form $\bigoplus_i V_i \boxtimes W_i$ ($V_i \in \mathcal{C}$, $W_i \in \mathcal{D}$) and
\begin{equation*}
  \Hom_{\mathcal{C} \boxtimes \mathcal{D}}^{}(V \boxtimes W, V' \boxtimes W')
  = \Hom_{\mathcal{C}}^{}(V, V') \otimes \Hom_{\mathcal{D}}^{}(W, W').
\end{equation*}
One can easily see that $E_{V \boxtimes W}^{(n)} = E_V^{(n)} \otimes E_W^{(n)}$. We have $\nu_n(V \boxtimes W) = \nu_n(V) \, \nu_n(W)$ as remarked in the proof of Proposition~5.11 of \cite{MR2313527}. We also have $\pdim(V \boxtimes W) = \pdim(V) \pdim(W)$.

\begin{proposition}
  \label{prop:FS-product}
  $\nu_n(\mathcal{C} \boxtimes \mathcal{D}) = \nu_n(\mathcal{C})\,\nu_n(\mathcal{D})$.
\end{proposition}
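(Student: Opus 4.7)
The plan is to apply the definition of $\nu_n$ directly, using the two multiplicative properties that have just been recorded before the statement, and then factor the resulting double sum.

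First I would recall that the simple objects of $\mathcal{C} \boxtimes \mathcal{D}$ are precisely (up to isomorphism) the objects $V \boxtimes W$ with $V \in \Irr(\mathcal{C})$ and $W \in \Irr(\mathcal{D})$; this is a standard fact about exterior products of semisimple categories, and it follows from the description of the Hom-spaces recalled just above the proposition (so that $\End_{\mathcal{C} \boxtimes \mathcal{D}}(V \boxtimes W) \cong \mathbb{C}$ when $V,W$ are simple, and $\Hom$'s between non-isomorphic such products vanish). Equip $\mathcal{C} \boxtimes \mathcal{D}$ with the pivotal structure given by $j_{V \boxtimes W} = j_V \boxtimes j_W$, which is the natural pivotal structure referred to in the paragraph preceding the proposition.

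Next I would apply the definition~(\ref{eq:FS-def}) of $\nu_n(\mathcal{C} \boxtimes \mathcal{D})$, indexing the sum by $\Irr(\mathcal{C}) \times \Irr(\mathcal{D})$:
\begin{equation*}
\nu_n(\mathcal{C} \boxtimes \mathcal{D}) = \sum_{V \in \Irr(\mathcal{C})} \sum_{W \in \Irr(\mathcal{D})} \nu_n(V \boxtimes W)\,\pdim(V \boxtimes W).
\end{equation*}
Now I invoke the two multiplicativity statements recorded in the paragraph before the proposition: $\nu_n(V \boxtimes W) = \nu_n(V)\,\nu_n(W)$ and $\pdim(V \boxtimes W) = \pdim(V)\,\pdim(W)$. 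Substituting these and separating the variables factors the double sum as a product:
\begin{equation*}
\nu_n(\mathcal{C} \boxtimes \mathcal{D}) = \Bigl(\sum_{V \in \Irr(\mathcal{C})} \nu_n(V)\,\pdim(V)\Bigr) \Bigl(\sum_{W \in \Irr(\mathcal{D})} \nu_n(W)\,\pdim(W)\Bigr) = \nu_n(\mathcal{C})\,\nu_n(\mathcal{D}).
\end{equation*}

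There is no real obstacle here; the content has been packaged into the two multiplicativity observations immediately preceding the proposition. The only thing one might want to double-check is that the sum $\nu_n(\mathcal{C} \boxtimes \mathcal{D})$ is independent of the chosen pivotal structure, which is exactly Lemma~\ref{lem:FS-invariance} (together with the remark that $\widetilde{\nu}_n(V) = \nu_n(V)\pdim(V)$), so the specific pivotal structure $j_V \boxtimes j_W$ used above is immaterial.
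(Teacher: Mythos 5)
Your proof is correct and follows exactly the paper's own argument: identify $\Irr(\mathcal{C} \boxtimes \mathcal{D})$ with $\Irr(\mathcal{C}) \times \Irr(\mathcal{D})$, apply the multiplicativity of $\nu_n$ and $\pdim$ under $\boxtimes$ recorded just before the proposition, and factor the double sum. Nothing to add.
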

\begin{proof}
  $(V, W) \mapsto V \boxtimes W$ gives a bijection between $\Irr(\mathcal{C}) \times \Irr(\mathcal{D})$ and $\Irr(\mathcal{C} \boxtimes \mathcal{D})$. The proof follows directly from the above observation.
\end{proof}

One may expect that $\nu_n(\mathcal{C})$ is an algebraic integer. In fact, the following proposition follows from Lemma~\ref{lem:cyclo-pdim} and Lemma~\ref{lem:cyclo-FS}.

\begin{proposition}
  \label{prop:FS-cyclo}
  There exists a root of unity $\xi$ such that $\nu_n(\mathcal{C}) \in \mathbb{Z}[\xi]$.
\end{proposition}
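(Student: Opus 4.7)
The plan is to unpack the definition $\nu_n(\mathcal{C}) = \sum_{V \in \Irr(\mathcal{C})} \nu_n(V)\,\pdim(V)$ and observe that each factor in each summand already lies in a cyclotomic ring, so the whole sum does too; the only remaining point is that the two cyclotomic rings involved can be combined into a single one $\mathbb{Z}[\xi]$ for one root of unity $\xi$.

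More precisely, first I would invoke Lemma~\ref{lem:cyclo-pdim} to produce a root of unity $\xi_1$ (depending only on $\mathcal{C}$) such that $\pdim(V) \in \mathbb{Z}[\xi_1]$ for every $V \in \mathcal{C}$. Next, by Lemma~\ref{lem:cyclo-FS}, $\nu_n(V) \in \mathbb{Z}[\zeta_n]$ for every object $V$, where $\zeta_n$ is a primitive $n$-th root of unity. Therefore each product $\nu_n(V)\,\pdim(V)$ lies in the compositum $\mathbb{Z}[\xi_1, \zeta_n]$.

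To finish, I would let $m$ be the least common multiple of $n$ and the order of $\xi_1$, and take $\xi$ to be a primitive $m$-th root of unity; then $\mathbb{Z}[\xi_1, \zeta_n] \subseteq \mathbb{Z}[\xi]$. Because $\Irr(\mathcal{C})$ is finite (by definition of a fusion category), the sum $\nu_n(\mathcal{C})$ is a finite $\mathbb{Z}$-linear combination of elements of $\mathbb{Z}[\xi]$, hence lies in $\mathbb{Z}[\xi]$.

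There is no real obstacle here: this proposition is a direct combination of Lemma~\ref{lem:cyclo-pdim} and Lemma~\ref{lem:cyclo-FS} together with the elementary fact that the compositum of two cyclotomic rings is cyclotomic. The only thing to remember is that a pivotal structure on $\mathcal{C}$ has been fixed (so that $\pdim$ and $\nu_n(V)$ make sense), which is granted by the standing assumption on $\mathcal{C}$.
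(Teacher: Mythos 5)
Your proof is correct and follows exactly the route the paper intends: the paper gives no separate argument but simply states that the proposition ``follows from Lemma~\ref{lem:cyclo-pdim} and Lemma~\ref{lem:cyclo-FS},'' which is precisely the combination you spell out. Your additional remark about passing to a single primitive $m$-th root of unity with $m = \lcm(n, \ord(\xi_1))$ is the (routine) detail the paper leaves implicit, and it is handled correctly.
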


If $\mathcal{C}$ is $\Rep(H)$ for some semisimple Hopf algebra $H$, $\nu_2(\mathcal{C})$ is equal to the trace of the antipode $S$ \cite[Proposition~2.5]{MR2213320}. Since $S^2 = \id_H$ by the theorem of Larson and Radford, we have $\Trace(S) \in \mathbb{Z}$. The following proposition is motivated by this fact.

\begin{proposition}
  \label{prop:FS-reality-1}
  $\nu_2(\mathcal{C}) \in \mathbb{R}$.
\end{proposition}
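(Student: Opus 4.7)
The plan is to show that every term in the defining sum
\[
\nu_2(\mathcal{C}) = \sum_{V \in \Irr(\mathcal{C})} \nu_2(V) \, \pdim(V)
\]
is individually real. First I would handle the $\nu_2(V)$ factor: by Lemma~\ref{lem:cyclo-FS}, $\nu_2(V) \in \mathbb{Z}[\zeta_2] = \mathbb{Z}$, and in fact one can say much more. Since $\Hom_{\mathcal{C}}(\mathbf{1}, V \otimes V) \cong \Hom_{\mathcal{C}}(V^*, V)$, this space vanishes when $V \not\cong V^*$ and is one-dimensional when $V \cong V^*$. Because the definition of $E_V^{(n)}$ is designed so that $(E_V^{(n)})^n = \id$ (as remarked before Lemma~\ref{lem:cyclo-FS}), the operator $E_V^{(2)}$ is an involution, so on the one-dimensional space it is $\pm\id$. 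Hence $\nu_2(V) \in \{-1,0,+1\}$, and $\nu_2(V) = 0$ unless $V$ is self-dual.

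The real content is then to prove that $\pdim(V) \in \mathbb{R}$ whenever $V \in \Irr(\mathcal{C})$ is self-dual, so that the surviving summands lie in $\mathbb{Z} \cdot \mathbb{R} \subset \mathbb{R}$. The key identity is $\pdim(V)\,\pdim(V^*) = |V|^2$. This follows by taking $g = j_V$ in the definition of the squared norm: using $j_V^* = j_{V^*}^{-1}$,
\[
|V|^2 = \ctrace(j_V)\,\ctrace\bigl((j_V^*)^{-1}\bigr) = \ctrace(j_V)\,\ctrace(j_{V^*}) = \pdim(V)\,\pdim(V^*).
\]
Since $V \cong V^*$, pivotal dimension being an isomorphism invariant gives $\pdim(V) = \pdim(V^*)$, and therefore $\pdim(V)^2 = |V|^2$.

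The main obstacle is now to invoke cleanly the fact that $|V|^2$ is a positive real number for every simple object of a fusion category. This is a standard consequence of the general theory of squared norms in \cite{MR2183279} (Proposition~2.9 / Remark~2.10 there), and it is already implicitly used in the paper when pseudo-unitarity is defined via the identity $|V|^2 = \FPdim(V)^2$. Granting this, $\pdim(V) = \pm\sqrt{|V|^2} \in \mathbb{R}$ for self-dual simple $V$, which completes the argument. A cosmetic check is that $\pdim(V)$, $\nu_2(V)$, and hence $\nu_2(\mathcal{C})$ are unchanged under replacing $j$ by $j' = \lambda j$ only up to the scalar identities of Lemma~\ref{lem:FS-invariance}, so the reality conclusion is in harmony with the invariance statement there; no additional argument is needed once the self-dual case is handled.
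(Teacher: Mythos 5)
Your proof is correct, and its first half coincides with the paper's: both arguments reduce $\nu_2(\mathcal{C})$ to a sum over self-dual simple objects on which $\nu_2(V)\in\{\pm 1\}$, using that $E_V^{(2)}$ is an involution of the at most one-dimensional space $\Hom_{\mathcal{C}}(\mathbf{1},V^{\otimes 2})\cong\Hom_{\mathcal{C}}(V^*,V)$. You diverge only in how you see that $\pdim(V)$ is real for self-dual simple $V$. The paper quotes the conjugation formula $\pdim(V^*)=\overline{\pdim(V)}$ from \cite[Proposition~2.9]{MR2183279}, so that $\pdim(V)=\pdim(V^*)=\overline{\pdim(V)}$; you instead derive $\pdim(V)\,\pdim(V^*)=|V|^2$ by taking $g=j_V$ in the definition of the squared norm and using $j_V^*=j_{V^*}^{-1}$, conclude $\pdim(V)^2=|V|^2$ from $V\cong V^*$, and then invoke the positivity of squared norms. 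Both routes lean on an external fact from \cite{MR2183279}; yours trades the conjugation formula for the positivity theorem (which is stated earlier in \S 2 of that paper than the results you name --- your attribution to their Proposition~2.9/Remark~2.10 is slightly off, and pseudo-unitarity is defined there by equality of the \emph{sums} $\sum|V|^2=\sum\FPdim(V)^2$, not termwise, so it does not ``implicitly'' contain the termwise positivity). In exchange you get the marginally stronger conclusion that $\pdim(V)^2$ is a positive real for self-dual simple $V$, which is more than is needed here but harmless. The closing remark about independence of the pivotal structure is not required for the statement and can be dropped.
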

\begin{proof}
  Let $V$ be a simple object of $\mathcal{C}$. Since $(E_V^{(2)})^2$ is identity and since
  \begin{equation*}
    \Hom_{\mathcal{C}}(\mathbf{1}, V^{\otimes 2}) \cong \Hom_{\mathcal{C}}(V^*, V) \cong
    \begin{cases}
      \mathbb{C} & \text{if $V$ is self-dual}, \\
      0          & \text{otherwise},
    \end{cases}
  \end{equation*}
  we have that $\nu_2(V) \ne 0$ if and only if $V$ is self-dual and that $\nu_2(V) \in \{ \pm 1 \}$ if $V$ is self-dual. Let $\mathcal{I}$ be the subset of $\Irr(\mathcal{C})$ consisting self-dual objects. Then we have
  \begin{equation*}
    \nu_2(\mathcal{C}) = \sum_{V \in \mathcal{I}} \nu_2(V) \pdim(V).
  \end{equation*}
  If $V$ is self-dual, $\pdim(V) = \pdim(V^*) = \overline{\pdim(V)}$ (for the last equality, see \cite[Proposition~2.9]{MR2183279}). Therefore, the right-hand side is a real number.
\end{proof}

One might expect moreover that $\nu_2(\mathcal{C})$ is an integer. However, this does not hold in general. In fact, if $\mathcal{C}$ is the Tambara-Yamagami category \cite[Definition~3.1]{MR1659954} associated with a finite abelian group $A$, a non-degenerate symmetric bicharacter $\chi$ of $A$ and a square root $\tau$ of $|A|^{-1}$, we have
\begin{equation*}
  \nu_2(\mathcal{C}) = \# \{ a \in A \mid a^2 = 1 \} + \sgn(\tau) \sqrt{|A|}
\end{equation*}
where $\sgn$ means the sign of a real number. This is not an integer unless $|A|$ is a square number. Indicators of Tambara-Yamagami categories will be discussed in a forthcoming paper.

Of course, it follows from the above proof, $\nu_2(\mathcal{C}) \in \mathbb{Z}$ if $\pdim(V) \in \mathbb{Z}$ for every self-dual simple object $V$. In particular, $\nu_2(H)$ for a semisimple quasi-Hopf algebra $H$ is an integer.

Higher indicators are not real in general. We show that $\nu_n(\mathcal{C}) \in \mathbb{R}$ for every $n$ if $\mathcal{C}$ admits a braiding. The following lemma is needed to prove this. Recall that also $\mathcal{C}^{\rm rev}$ is a pivotal fusion category.

\begin{lemma}
  \label{lem:FS-rev}
  $\nu_n(\mathcal{C}^{\rm rev}) = \overline{\nu_n(\mathcal{C})}$.
\end{lemma}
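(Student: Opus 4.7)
My strategy is to prove the stronger, term-by-term identity
\[
\nu_n(V^{\rm rev}) \, \pdim(V^{\rm rev}) \;=\; \overline{\nu_n(V) \, \pdim(V)}
\]
for each simple object $V \in \mathcal{C}$, where the quantities on the left are computed in $\mathcal{C}^{\rm rev}$ with its induced pivotal structure. Summing over $\Irr(\mathcal{C}) = \Irr(\mathcal{C}^{\rm rev})$ then yields the lemma. By Lemma~\ref{lem:FS-invariance}, the product $\nu_n \cdot \pdim$ on a simple object does not depend on the choice of pivotal structure, so I am free to work with any convenient pivotal structures on $\mathcal{C}$ and $\mathcal{C}^{\rm rev}$.

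For the pivotal dimension, equation~(\ref{eq:ptrace-2}) applied to $\id_V$ gives $\pdim(V^{\rm rev}) = \pdim(V^*)$, and the identity $\pdim(V^*) = \overline{\pdim(V)}$ invoked in the proof of Proposition~\ref{prop:FS-reality-1} then shows $\pdim(V^{\rm rev}) = \overline{\pdim(V)}$.

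The main work is to prove $\nu_n(V^{\rm rev}) = \overline{\nu_n(V)}$. Identifying $V^{\rm rev,\otimes^{\rm rev}n}$ with $V^{\otimes n}$ through an iterated associativity isomorphism in the spirit of~(\ref{eq:FS-assoc}), the operators $E_{V^{\rm rev}}^{(n)}$ and $E_V^{(n)}$ act on the same space $\Hom_{\mathcal{C}}(\mathbf{1}, V^{\otimes n})$. I claim that under this identification,
\[
E_{V^{\rm rev}}^{(n)} \;=\; \bigl(E_V^{(n)}\bigr)^{-1}.
\]
The transparent way to see this is via the graphical calculus recalled just before Lemma~\ref{lem:cyclo-FS}: $E_V^{(n)}$ is represented by a diagram that rotates a tangle by $2\pi/n$, and passing to $\mathcal{C}^{\rm rev}$ horizontally reflects every such diagram, thereby reversing the direction of the rotation. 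Algebraically, the same identity can be checked directly from the definition, using $a^{\rm rev}_{X,Y,Z} = a_{Z,Y,X}^{-1}$ together with the exchange of $b_V$ and $d_V$ on passing to $\mathcal{C}^{\rm rev}$ (with the discrepancy between left and right duality absorbed by the pivotal isomorphism $j_V : V \to V^{**}$).

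Granted this identity, since $(E_V^{(n)})^n = \id$ its eigenvalues are $n$-th roots of unity, whence
\[
\nu_n(V^{\rm rev}) \;=\; \Trace\!\bigl((E_V^{(n)})^{-1}\bigr) \;=\; \overline{\Trace(E_V^{(n)})} \;=\; \overline{\nu_n(V)}.
\]
Combining with $\pdim(V^{\rm rev}) = \overline{\pdim(V)}$ gives the term-wise equality, and summation finishes the proof. The main obstacle is the verification of $E_{V^{\rm rev}}^{(n)} = (E_V^{(n)})^{-1}$: while this is essentially tautological in graphical calculus, an algebraic proof requires careful tracking of the reversed associators and the exchange of evaluation and coevaluation under $\mathcal{C} \mapsto \mathcal{C}^{\rm rev}$.
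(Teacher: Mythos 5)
Your proof is correct and follows essentially the same route as the paper: the paper likewise reduces the lemma to the term-wise identities $\nu_n(V^{\rm rev}) = \overline{\nu_n(V)}$ and $\pdim(V^{\rm rev}) = \pdim(V^*) = \overline{\pdim(V)}$, citing Ng--Schauenburg (Lemma~5.2 of \cite{MR2381536}) for the first and \cite[Proposition~2.9]{MR2183279} together with~(\ref{eq:ptrace-2}) for the second. The only difference is that you unpack the cited ingredient by sketching $E_{V^{\rm rev}}^{(n)} = (E_V^{(n)})^{-1}$ and using that $E_V^{(n)}$ has finite order so its eigenvalues are roots of unity, which is precisely the argument behind that citation.
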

\begin{proof}
  By \cite[Lemma~5.2]{MR2381536}, we have $\nu_n(V^{\rm rev}) = \overline{\nu_n(V)}$. We also have
  \begin{equation*}
    \pdim(V^{\rm rev}) = \pdim(V^*) = \overline{\pdim(V)}
  \end{equation*}
  by (\ref{eq:ptrace-2}) \cite[Proposition~2.9]{MR2183279}. Now the proof is obvious.
\end{proof}

\begin{corollary}
  \label{cor:FS-reality-2}
  $\nu_n(\mathcal{C}) \in \mathbb{R}$ if $\mathcal{C}$ admits a braiding.
\end{corollary}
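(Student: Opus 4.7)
The plan is to combine the two preceding results, Theorem~\ref{thm:FS-invariance} and Lemma~\ref{lem:FS-rev}, with the standard observation that a braiding on $\mathcal{C}$ gives a monoidal equivalence between $\mathcal{C}$ and $\mathcal{C}^{\rm rev}$.

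First I would set up the equivalence. If $c$ is a braiding on $\mathcal{C}$, define a functor $F: \mathcal{C} \to \mathcal{C}^{\rm rev}$ by $F(V) = V^{\rm rev}$ on objects and $F(f) = f^{\rm rev}$ on morphisms, with monoidal structure $F_{V,W}: F(V) \otimes^{\rm rev} F(W) = W \otimes V \xrightarrow{c_{W,V}} V \otimes W = F(V \otimes W)$. The hexagon axioms for the braiding are exactly what is needed to make $F$ a monoidal functor, and since $F$ is the identity on the underlying category, it is a monoidal equivalence.

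Next, I would apply Theorem~\ref{thm:FS-invariance} to conclude $\nu_n(\mathcal{C}) = \nu_n(\mathcal{C}^{\rm rev})$. Combining with Lemma~\ref{lem:FS-rev}, which gives $\nu_n(\mathcal{C}^{\rm rev}) = \overline{\nu_n(\mathcal{C})}$, we get $\nu_n(\mathcal{C}) = \overline{\nu_n(\mathcal{C})}$, whence $\nu_n(\mathcal{C}) \in \mathbb{R}$.

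There is no real obstacle here; the only minor point worth checking is that the equivalence $F$ preserves the property of being a fusion category with pivotal structure (which is immediate since the underlying category and the duality data are untouched), so that Theorem~\ref{thm:FS-invariance} applies. The mild subtlety that Theorem~\ref{thm:FS-invariance} is stated for pivotal fusion categories (without requiring $F$ to preserve pivotal structures on the nose) is already handled in its proof via Lemma~\ref{lem:FS-invariance}, so nothing more needs to be verified.
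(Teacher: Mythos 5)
Your proposal is correct and follows exactly the paper's argument: the braiding gives a monoidal equivalence $\mathcal{C} \simeq \mathcal{C}^{\rm rev}$, and then Theorem~\ref{thm:FS-invariance} combined with Lemma~\ref{lem:FS-rev} yields $\nu_n(\mathcal{C}) = \nu_n(\mathcal{C}^{\rm rev}) = \overline{\nu_n(\mathcal{C})}$. You simply spell out the construction of the equivalence via the braiding, which the paper leaves as a standard fact.
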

\begin{proof}
  If $\mathcal{C}$ admits a braiding, $\mathcal{C}$ is monoidally equivalent to $\mathcal{C}^{\rm rev}$. By Theorem~\ref{thm:FS-invariance} and Lemma~\ref{lem:FS-rev}, we have $\nu_n(\mathcal{C}) = \nu_n(\mathcal{C}^{\rm rev}) = \overline{\nu_n(\mathcal{C})}$.
\end{proof}

As another corollary of Lemma~\ref{lem:FS-rev}, we have the following description of indicators of the opposite category $\mathcal{C}^{\rm op}$.

\begin{corollary}
  \label{cor:FS-op}
  $\nu_n(\mathcal{C}^{\rm op}) = \overline{\nu_n(\mathcal{C})}$.
\end{corollary}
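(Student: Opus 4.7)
The plan is to reduce this corollary to Lemma~\ref{lem:FS-rev} by exhibiting a monoidal equivalence $\mathcal{C}^{\rm op} \simeq \mathcal{C}^{\rm rev}$ and then invoking Theorem~\ref{thm:FS-invariance}.

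First I would set up the duality functor as a monoidal equivalence. Since $\mathcal{C}$ is left rigid, the contravariant functor $(-)^*: \mathcal{C} \to \mathcal{C}$ may be regarded as a covariant functor $F: \mathcal{C}^{\rm rev} \to \mathcal{C}^{\rm op}$ sending $V \mapsto V^*$. The standard natural isomorphism $(V \otimes W)^* \cong W^* \otimes V^*$ then provides, for objects $V, W \in \mathcal{C}^{\rm rev}$, an isomorphism
\begin{equation*}
  F(V \otimes^{\rm rev} W) = (W \otimes V)^* \cong V^* \otimes W^* = F(V) \otimes_{\mathcal{C}^{\rm op}} F(W),
\end{equation*}
and these constitute the coherent monoidal structure on $F$. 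Because $(-)^{**} \cong \id_{\mathcal{C}}$ (via any pivotal structure), $F$ is moreover an equivalence of categories. Consequently, $F$ is a monoidal equivalence $\mathcal{C}^{\rm rev} \to \mathcal{C}^{\rm op}$.

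Second, I would note that both $\mathcal{C}^{\rm rev}$ and $\mathcal{C}^{\rm op}$ are fusion categories that admit pivotal structures (for $\mathcal{C}^{\rm rev}$ this was observed earlier in Section~\ref{sec:preliminaries}; for $\mathcal{C}^{\rm op}$ one transports the pivotal structure of $\mathcal{C}^{\rm rev}$ through $F$, or constructs it directly from $j$). By Lemma~\ref{lem:FS-invariance}, the definition of $\nu_n$ does not depend on this choice. Hence Theorem~\ref{thm:FS-invariance} applied to $F$ yields $\nu_n(\mathcal{C}^{\rm op}) = \nu_n(\mathcal{C}^{\rm rev})$, and combining with Lemma~\ref{lem:FS-rev} gives
\begin{equation*}
  \nu_n(\mathcal{C}^{\rm op}) = \nu_n(\mathcal{C}^{\rm rev}) = \overline{\nu_n(\mathcal{C})},
\end{equation*}
as required.

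There is no genuine obstacle here; the only small care needed is making sure one has a well-defined pivotal structure on $\mathcal{C}^{\rm op}$ to even form $\nu_n(\mathcal{C}^{\rm op})$. The cleanest way is to simply transport $j$ along $F$, so that $F$ automatically preserves pivotal structures and hence preserves both $\nu_n(V)$ and $\pdim(V)$ summand-by-summand (cf.\ the proof of Theorem~\ref{thm:FS-invariance}); the independence of pivotal structure given by Lemma~\ref{lem:FS-invariance} then shows the formula holds for any pivotal structure one chooses on $\mathcal{C}^{\rm op}$.
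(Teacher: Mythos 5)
Your proposal is correct and follows essentially the same route as the paper: the paper states that duality gives a monoidal equivalence $\mathcal{C} \simeq \mathcal{C}^{\rm op\,rev}$ and combines Theorem~\ref{thm:FS-invariance} with Lemma~\ref{lem:FS-rev}, which is exactly your argument with the equivalence written as $\mathcal{C}^{\rm rev} \simeq \mathcal{C}^{\rm op}$ instead (the two are interchanged by applying $(-)^{\rm rev}$, so the conjugation just appears at a different step). Your extra care about fixing a pivotal structure on $\mathcal{C}^{\rm op}$ is a reasonable elaboration of a point the paper leaves implicit.
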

\begin{proof}
  The duality gives a monoidal equivalence between $\mathcal{C}$ and $\mathcal{C}^{\rm op\,rev}$. This implies that $\nu_n(\mathcal{C}) = \overline{\nu_n(\mathcal{C}^{\rm op})}$. Hence, $\nu_n(\mathcal{C}^{\rm op}) = \overline{\nu_n(\mathcal{C})}$.
\end{proof}

Now we assume $\mathcal{C}$ to be spherical. The following equation (\ref{eq:FS-formula}) is due to Ng and Schauenburg (see the proof of Theorem~5.5 of \cite{MR2313527}). We present the proof for the sake of completeness.

\begin{lemma}
  \label{lem:FS-formula}
  Let $\mathcal{C}$ be a spherical fusion category. We have
  \begin{equation}
    \label{eq:FS-formula}
    \nu_n(\mathcal{C}) = \frac{1}{\dim(\mathcal{C})}
    \sum_{X \in \Irr(\mathcal{Z}(\mathcal{C}))} \theta_X^n \pdim(X)^2
  \end{equation}
  where $\theta$ is the canonical twist of $\mathcal{Z}(\mathcal{C})$.
\end{lemma}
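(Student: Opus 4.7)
The plan is to combine the definition (\ref{eq:FS-def}) of $\nu_n(\mathcal{C})$ with the ``third formula'' (\ref{eq:FS-third}) of Ng-Schauenburg, and then use the decomposition (\ref{eq:adjoint}) of the induction functor $I_{\mathcal{C}}$ together with the compatibility of the pivotal trace on $\mathcal{Z}(\mathcal{C})$ with $\Pi_\mathcal{C}$.

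First I would substitute (\ref{eq:FS-third}) into (\ref{eq:FS-def}) to obtain
\begin{equation*}
  \nu_n(\mathcal{C}) = \frac{1}{\dim(\mathcal{C})} \sum_{V \in \Irr(\mathcal{C})} \ptrace\bigl(\theta^n_{I_{\mathcal{C}}(V)}\bigr) \pdim(V).
\end{equation*}
Since $\theta_X$ acts as a scalar on each simple object $X \in \Irr(\mathcal{Z}(\mathcal{C}))$ (as $\End_{\mathcal{Z}(\mathcal{C})}(X) \cong \mathbb{C}$), and since pivotal trace is additive, the decomposition (\ref{eq:adjoint}) yields
\begin{equation*}
  \ptrace\bigl(\theta^n_{I_{\mathcal{C}}(V)}\bigr) = \sum_{X \in \Irr(\mathcal{Z}(\mathcal{C}))} [X:V] \, \theta_X^n \, \pdim(X).
\end{equation*}

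Substituting and swapping the order of summation gives
\begin{equation*}
  \nu_n(\mathcal{C}) = \frac{1}{\dim(\mathcal{C})} \sum_{X \in \Irr(\mathcal{Z}(\mathcal{C}))} \theta_X^n \, \pdim(X) \left( \sum_{V \in \Irr(\mathcal{C})} [X:V] \pdim(V) \right).
\end{equation*}
The crux is then to recognize the inner sum as $\pdim(X)$. By the compatibility noted before (\ref{eq:adjoint}), the induced pivotal structure $J$ on $\mathcal{Z}(\mathcal{C})$ satisfies $\pdim_J(X) = \pdim_j(\Pi_{\mathcal{C}}(X))$, and the definition of $[X:V]$ as $\dim_\mathbb{C} \Hom_\mathcal{C}(X,V)$ means exactly that $\Pi_\mathcal{C}(X) \cong \bigoplus_{V \in \Irr(\mathcal{C})} V^{\oplus [X:V]}$ in $\mathcal{C}$. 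Additivity of the pivotal dimension therefore gives $\sum_V [X:V] \pdim(V) = \pdim(X)$, and the formula (\ref{eq:FS-formula}) follows at once.

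The proof is essentially a bookkeeping argument once (\ref{eq:FS-third}) is available; the only nontrivial point is the identification of the inner sum with $\pdim(X)$, which relies on the fact that the forgetful functor $\Pi_{\mathcal{C}}$ is pivotal (so that dimensions in $\mathcal{Z}(\mathcal{C})$ match those in $\mathcal{C}$). I do not anticipate a serious obstacle, since sphericity of $\mathcal{C}$ is used only to legitimize the appeal to (\ref{eq:FS-third}) and to guarantee that $\mathcal{Z}(\mathcal{C})$ is ribbon with the canonical twist $\theta$.
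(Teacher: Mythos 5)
Your proposal is correct and follows essentially the same route as the paper's proof: substitute the third formula (\ref{eq:FS-third}) into (\ref{eq:FS-def}), expand $I_{\mathcal{C}}(V)$ via (\ref{eq:adjoint}), swap the sums, and identify $\sum_{V}[\Pi_{\mathcal{C}}(X):V]\pdim(V)$ with $\pdim(X)$ by additivity of the pivotal dimension and the compatibility of the induced pivotal structure on $\mathcal{Z}(\mathcal{C})$ with the forgetful functor.
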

\begin{proof}
  We use formula~(\ref{eq:FS-third}). In view of~(\ref{eq:adjoint}), we have
  \begin{equation*}
    \begin{split}
      \nu_n(\mathcal{C})
      & = \frac{1}{\dim(\mathcal{C})} \sum_{V \in \Irr(\mathcal{C})} \ptrace(\theta_{I_{\mathcal{C}}(V)}^n) \pdim(V) \\
      & = \frac{1}{\dim(\mathcal{C})} \sum_{X \in \Irr(\mathcal{Z}(\mathcal{C}))}
      \sum_{V \in \Irr(\mathcal{C})} \theta_X^n \pdim(X) \cdot [\Pi_{\mathcal{C}}(X):V] \pdim(V).
    \end{split}
  \end{equation*}
  Here, since $\Pi_{\mathcal{C}}(X) \cong \bigoplus_{V \in \Irr(\mathcal{C})} V^{\oplus [\Pi_{\mathcal{C}}(X):V]}$, we have
  \begin{equation*}
    \pdim(X) = \pdim(\Pi_{\mathcal{C}}(X)) = \sum_{V \in \Irr(\mathcal{C})} [\Pi_{\mathcal{C}}(X):V] \pdim(V)
  \end{equation*}
  by the additivity of the pivotal dimension. This completes the proof.
\end{proof}

\begin{remark}
  {\rm (i)} Some authors defined the regular representation in $\mathcal{C}$ by using Frobenius-Perron dimensions instead of pivotal dimensions. One can prove
  \begin{equation*}
    \sum_{V \in \mathcal{C}} \nu_n(V) \FPdim(V)
    = \frac{1}{\dim(\mathcal{C})} \sum_{X \in \mathcal{Z}(\mathcal{C})} \theta_X^n \pdim(X) \FPdim(X).
  \end{equation*}
  in a similar way as the proof of Lemma~\ref{lem:FS-formula}.

  {\rm (ii)} The right hand side of equation~(\ref{eq:FS-formula}) is equal to the Reshetikhin-Turaev invariant of the lens space $L(n, 1)$ associated with modular tensor category $\mathcal{Z}(\mathcal{C})$, see \cite{MR1797619} and \cite{MR1292673}.
\end{remark}

The following corollaries are direct consequences of Lemma~\ref{lem:FS-formula}.

\begin{corollary}
  \label{cor:FS-center-1}
  Let $\mathcal{C}$ and $\mathcal{D}$ be spherical fusion categories. If $\mathcal{Z}(\mathcal{C})$ and $\mathcal{Z}(\mathcal{D})$ are equivalent as ribbon categories, we have $\nu_n(\mathcal{C}) = \nu_n(\mathcal{D})$ for every $n$.
\end{corollary}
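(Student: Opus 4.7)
The plan is to read off the corollary directly from the formula of Lemma~\ref{lem:FS-formula}. That formula expresses $\nu_n(\mathcal{C})$ entirely in terms of data living in the ribbon category $\mathcal{Z}(\mathcal{C})$ (together with the scalar $\dim(\mathcal{C})$), so one only has to verify that each ingredient on the right-hand side depends solely on the ribbon-equivalence class of $\mathcal{Z}(\mathcal{C})$.

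First I would fix a ribbon equivalence $F: \mathcal{Z}(\mathcal{C}) \to \mathcal{Z}(\mathcal{D})$. By definition such an $F$ is a braided monoidal equivalence compatible with the twists, so $\theta_{F(X)} = F(\theta_X)$ for every $X$, and (since $j = u\theta$) $F$ preserves the induced pivotal structures. Hence, using~(\ref{eq:ptrace-1}), $F$ restricts to a bijection between $\Irr(\mathcal{Z}(\mathcal{C}))$ and $\Irr(\mathcal{Z}(\mathcal{D}))$ that preserves both the twist eigenvalues $\theta_X \in \mathbb{C}^{\times}$ and the pivotal dimensions $\pdim(X)$. Consequently the sums
\begin{equation*}
  \sum_{X \in \Irr(\mathcal{Z}(\mathcal{C}))} \theta_X^n \pdim(X)^2
  \quad \text{and} \quad
  \sum_{Y \in \Irr(\mathcal{Z}(\mathcal{D}))} \theta_Y^n \pdim(Y)^2
\end{equation*}
coincide term by term.

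Next I have to show that the normalizing factor also matches, that is, $\dim(\mathcal{C}) = \dim(\mathcal{D})$. For this I would invoke the standard fact that for a spherical fusion category one has $\dim(\mathcal{Z}(\mathcal{C})) = \dim(\mathcal{C})^2$ (see~\cite[Theorem~2.15]{MR2183279} or M\"uger). A ribbon equivalence is in particular a $\mathbb{C}$-linear monoidal equivalence, so it sends simple objects to simple objects with the same squared norm; therefore $\dim(\mathcal{Z}(\mathcal{C})) = \dim(\mathcal{Z}(\mathcal{D}))$, and taking positive square roots gives $\dim(\mathcal{C}) = \dim(\mathcal{D})$.

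Combining these two observations with Lemma~\ref{lem:FS-formula} immediately yields $\nu_n(\mathcal{C}) = \nu_n(\mathcal{D})$ for every $n$. The only mildly subtle point is the equality $\dim(\mathcal{C}) = \dim(\mathcal{D})$; once one cites $\dim(\mathcal{Z}(\mathcal{C})) = \dim(\mathcal{C})^2$, everything else is formal from the ribbon equivalence and the formula~(\ref{eq:FS-formula}).
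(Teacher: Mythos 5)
Your proposal is correct and is exactly the argument the paper intends when it calls the corollary a ``direct consequence'' of Lemma~\ref{lem:FS-formula}: a ribbon equivalence preserves the twist scalars and the induced pivotal structure $J=u\theta$, hence the sum $\sum_X \theta_X^n \pdim(X)^2$, and the normalization matches because $\dim(\mathcal{Z}(\mathcal{C}))=\dim(\mathcal{C})^2$ with $\dim(\mathcal{C})>0$. You have merely made explicit the one nontrivial point (that $\dim(\mathcal{C})=\dim(\mathcal{D})$) which the paper leaves implicit, so there is nothing to add.
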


\begin{corollary}
  \label{cor:FS-center-2}
  Let $\mathcal{C}$ and $\mathcal{D}$ be pseudo-unitary fusion categories. If $\mathcal{Z}(\mathcal{C})$ and $\mathcal{Z}(\mathcal{D})$ are equivalent as braided monoidal categories, we have $\nu_n(\mathcal{C}) = \nu_n(\mathcal{D})$ for every $n$.
\end{corollary}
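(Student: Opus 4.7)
The plan is to reduce the statement to Corollary~\ref{cor:FS-center-1}: given a braided monoidal equivalence $F : \mathcal{Z}(\mathcal{C}) \to \mathcal{Z}(\mathcal{D})$, I would argue that $F$ is automatically a ribbon equivalence when $\mathcal{C}$ and $\mathcal{D}$ are pseudo-unitary, and then invoke Corollary~\ref{cor:FS-center-1}.

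The first step is to verify that $\mathcal{Z}(\mathcal{C})$ and $\mathcal{Z}(\mathcal{D})$ are themselves pseudo-unitary fusion categories. Using M\"uger's computation $\dim(\mathcal{Z}(\mathcal{C})) = \dim(\mathcal{C})^2$ together with the multiplicativity $\FPdim(\mathcal{Z}(\mathcal{C})) = \FPdim(\mathcal{C})^2$, the pseudo-unitarity of $\mathcal{C}$ immediately lifts to $\mathcal{Z}(\mathcal{C})$, and similarly for $\mathcal{D}$. Consequently each center carries its own canonical pivotal structure in the sense of \cite[Proposition~8.23]{MR2183279}.

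Next I would identify the canonical twist on $\mathcal{Z}(\mathcal{C})$ of Corollary~\ref{cor:FS-center-1} with data invariant under braided equivalence. Writing $j$ for the canonical pivotal structure of $\mathcal{C}$ and $J$ for the one it induces on $\mathcal{Z}(\mathcal{C})$, the chain $\pdim_J(X) = \pdim_j(\Pi_{\mathcal{C}}(X)) = \FPdim(\Pi_{\mathcal{C}}(X)) = \FPdim(X)$ shows that $J$ coincides with the canonical pivotal structure of the pseudo-unitary category $\mathcal{Z}(\mathcal{C})$; hence the canonical twist of Corollary~\ref{cor:FS-center-1} is $\theta^{\mathcal{C}} = u^{-1} J$, with $J$ the canonical (pseudo-unitary) pivotal structure. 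The same identification holds on $\mathcal{Z}(\mathcal{D})$.

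Finally, since $F$ is a monoidal equivalence between pseudo-unitary fusion categories, \cite[Corollary~6.2]{MR2381536} guarantees that $F$ intertwines the canonical pivotal structures. On the other hand, the Drinfeld isomorphism $u$ is expressed solely in terms of the braiding and the duality morphisms, both preserved by any braided monoidal equivalence, so $F$ also intertwines the Drinfeld isomorphisms. Combining these two facts gives $F(\theta^{\mathcal{C}}) = \theta^{\mathcal{D}}$, i.e., $F$ is a ribbon equivalence, and Corollary~\ref{cor:FS-center-1} yields the conclusion. The main obstacle in this plan is the matching of the two pivotal structures on $\mathcal{Z}(\mathcal{C})$ (and the verification that the Drinfeld isomorphism is genuinely natural with respect to braided equivalences); once those identifications are in place the rest is formal.
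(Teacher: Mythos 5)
Your proposal is correct and is essentially the argument the paper intends: the paper dismisses this corollary as a ``direct consequence'' of Lemma~\ref{lem:FS-formula}, and the content you supply --- that $\mathcal{Z}(\mathcal{C})$ and $\mathcal{Z}(\mathcal{D})$ are pseudo-unitary, that the induced pivotal structure $J$ agrees with the canonical one because both give $\FPdim$ on simples, and that a braided equivalence therefore preserves the canonical twist $\theta = u^{-1}J$ --- is exactly what makes the right-hand side of~(\ref{eq:FS-formula}) a braided invariant in this setting. Routing the conclusion through Corollary~\ref{cor:FS-center-1} rather than directly through the formula is an immaterial difference.
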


Modular tensor categories \cite{MR1797619} are an important class of fusion categories. A formula of indicators of modular tensor categories has been obtained by Kashina, Montgomery and Ng: In the proof of Proposition~5.5 of \cite{KMN09}, they showed that
\begin{equation*}
  \nu_n(\mathcal{M}) = \frac{1}{\dim(\mathcal{M})} \left| \sum_{X \in \Irr(\mathcal{M})} \theta_X^n \pdim(X)^2 \right|^2
\end{equation*}
for a modular tensor category $\mathcal{M}$ with twist $\theta$. If $\mathcal{C}$ is a spherical fusion category, then $\mathcal{Z}(\mathcal{C})$ is a modular tensor category \cite{MR1966525}. Therefore:

\begin{theorem}[cf. {\cite[Theorem~5.6]{KMN09}}]
  \label{thm:FS-center-3}
  $\nu_n(\mathcal{Z}(\mathcal{C})) = |\nu_n(\mathcal{C})|^2$.
\end{theorem}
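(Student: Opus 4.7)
The plan is to apply the formula of Kashina--Montgomery--Ng (quoted in the paragraph preceding the theorem) to the modular tensor category $\mathcal{Z}(\mathcal{C})$. Since $\mathcal{C}$ is a spherical fusion category, M\"uger's theorem guarantees that $\mathcal{Z}(\mathcal{C})$ is modular, so the KMN formula is applicable with $\mathcal{M} = \mathcal{Z}(\mathcal{C})$ and the canonical twist $\theta$ on $\mathcal{Z}(\mathcal{C})$.

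First, I would record the identity
\begin{equation*}
  \nu_n(\mathcal{Z}(\mathcal{C})) = \frac{1}{\dim(\mathcal{Z}(\mathcal{C}))} \left| \sum_{X \in \Irr(\mathcal{Z}(\mathcal{C}))} \theta_X^n \pdim(X)^2 \right|^2
\end{equation*}
from the KMN formula. Next, I would invoke Lemma~\ref{lem:FS-formula} in the form
\begin{equation*}
  \sum_{X \in \Irr(\mathcal{Z}(\mathcal{C}))} \theta_X^n \pdim(X)^2 = \dim(\mathcal{C}) \cdot \nu_n(\mathcal{C}),
\end{equation*}
so that the squared modulus above becomes $\dim(\mathcal{C})^2 \cdot |\nu_n(\mathcal{C})|^2$.

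It remains to identify $\dim(\mathcal{Z}(\mathcal{C})) = \dim(\mathcal{C})^2$, which is a standard fact about the Drinfeld center of a spherical fusion category (and uses that $\dim(\mathcal{C})$ is a positive real number, being a sum of squared norms). Substituting yields
\begin{equation*}
  \nu_n(\mathcal{Z}(\mathcal{C})) = \frac{\dim(\mathcal{C})^2 \, |\nu_n(\mathcal{C})|^2}{\dim(\mathcal{C})^2} = |\nu_n(\mathcal{C})|^2,
\end{equation*}
finishing the proof.

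There is essentially no obstacle: everything is assembled from results already quoted, and the only subtlety is the appeal to $\dim(\mathcal{Z}(\mathcal{C})) = \dim(\mathcal{C})^2$ and the reality of $\dim(\mathcal{C})$, both of which are classical for spherical fusion categories. The proof is therefore a two-line calculation once the KMN formula and Lemma~\ref{lem:FS-formula} are in hand.
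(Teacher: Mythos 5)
Your proposal is correct and follows essentially the same route as the paper: the theorem there is stated as an immediate consequence of the Kashina--Montgomery--Ng formula applied to the modular tensor category $\mathcal{Z}(\mathcal{C})$, combined with Lemma~\ref{lem:FS-formula} and the identity $\dim(\mathcal{Z}(\mathcal{C})) = \dim(\mathcal{C})^2$, exactly as you spell out. The only difference is that you make explicit the intermediate steps (and the reality and positivity of $\dim(\mathcal{C})$) that the paper leaves implicit.
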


In the case where $\mathcal{C}$ is pseudo-unitary, we can give another proof: M\"uger showed that there is an equivalence $\mathcal{Z}(\mathcal{Z}(\mathcal{C})) \approx \mathcal{Z}(\mathcal{C} \boxtimes \mathcal{C}^{\rm rev})$ of braided monoidal categories in \cite{MR1966525}. Therefore Theorem~\ref{thm:FS-center-3} follows from Corollary~\ref{cor:FS-center-2}. It should be remarked that this argument is not applicable for general spherical fusion categories since we did not prove that $\mathcal{Z}(\mathcal{Z}(\mathcal{C}))$ and $\mathcal{Z}(\mathcal{C} \boxtimes \mathcal{C}^{\rm rev})$ are equivalent as ribbon categories.

We apply our results to Hopf algebras and obtain the following:

\begin{theorem}
  \label{thm:FS-Hopf}
  Let $H$ and $L$ be semisimple quasi-Hopf algebras. Denote by $D(H)$ the Drinfeld double of a quasi-Hopf algebra $H$.
  \begin{enumalph}
  \item Suppose that $\Rep(D(H))$ and $\Rep(D(L))$ are equivalent as braided monoidal categories. Then $\nu_n(H) = \nu_n(L)$ for every $n$.
  \item $\nu_n(H \otimes L) = \nu_n(H) \, \nu_n(L)$.
  \item $\nu_n(H^{\op\,\cop}) = \nu_n(H)$ and $\nu_n(H^{\op}) = \nu_n(H^{\cop}) = \overline{\nu_n(H)}$.
  \item $\nu_n(H) \in \mathbb{R}$ if $H$ admits a universal $R$-matrix.
  \item $\nu_n(D(H)) = |\nu_n(H)|^2$.
  \item If $H$ is a Hopf algebra, then $\nu_n(H^*)$, the $n$-th Frobenius-Schur indicator of the regular representation of the dual Hopf algebra $H^*$, is equal to $\nu_n(H)$.
  \end{enumalph}
\end{theorem}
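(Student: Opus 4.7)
The plan is to reduce each of the six assertions to a categorical fact about $\Rep(H)$ (or its Drinfeld center) and then invoke the corresponding result proved earlier in this section. Throughout, I will rely on the standard braided monoidal equivalence $\Rep(D(H)) \simeq \mathcal{Z}(\Rep(H))$ for any semisimple quasi-Hopf algebra $H$, together with the already-recalled fact that $\Rep(H)$ is pseudo-unitary.

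Parts (a), (b), (d) and (e) are the quickest. For (a), the hypothesis lifts to a braided monoidal equivalence $\mathcal{Z}(\Rep(H)) \simeq \mathcal{Z}(\Rep(L))$ between pseudo-unitary categories, so Corollary~\ref{cor:FS-center-2} applies. For (e), the same identification $\Rep(D(H)) \simeq \mathcal{Z}(\Rep(H))$ combined with Theorem~\ref{thm:FS-center-3} yields $\nu_n(D(H)) = \nu_n(\mathcal{Z}(\Rep(H))) = |\nu_n(H)|^2$. Part (b) uses the monoidal equivalence $\Rep(H \otimes L) \simeq \Rep(H) \boxtimes \Rep(L)$ together with Proposition~\ref{prop:FS-product}. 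Part (d) is immediate from Corollary~\ref{cor:FS-reality-2}, since a universal $R$-matrix makes $\Rep(H)$ braided.

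For (c), the antipode $S$ furnishes a (quasi-)Hopf algebra isomorphism $H \cong H^{\op\,\cop}$, so Theorem~\ref{thm:FS-invariance} gives $\nu_n(H^{\op\,\cop}) = \nu_n(H)$. Tautologically, $\Rep(H^{\cop}) = \Rep(H)^{\rm rev}$ as monoidal categories (the same modules with reversed tensor product), whence Lemma~\ref{lem:FS-rev} produces $\nu_n(H^{\cop}) = \overline{\nu_n(H)}$. Applying this last step with $H^{\op\,\cop}$ in place of $H$ and using the identification $H^{\op} = (H^{\op\,\cop})^{\cop}$ then gives $\nu_n(H^{\op}) = \overline{\nu_n(H^{\op\,\cop})} = \overline{\nu_n(H)}$.

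The delicate part is (f). My plan is to apply (a) with $L = H^*$, which requires a braided monoidal equivalence $\Rep(D(H)) \simeq \Rep(D(H^*))$. This should follow from the symmetric role of $H$ and $H^*$ in the Yetter-Drinfeld construction: since an $H$-coaction on a finite-dimensional vector space is the same as an $H^*$-action, the category of left-left Yetter-Drinfeld modules over $H$ coincides, modulo op/cop bookkeeping, with that over $H^*$, whence $\mathcal{Z}(\Rep(H)) \simeq \mathcal{Z}(\Rep(H^*))$ as braided monoidal categories. The hard part will be confirming that the resulting equivalence preserves, rather than reverses, the braiding under the chosen conventions; once that is done, part (a) delivers $\nu_n(H) = \nu_n(H^*)$.
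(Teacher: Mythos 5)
Your proposal is correct and follows essentially the same route as the paper, which simply declares (a)--(e) to be obvious consequences of the results of Section~\ref{sec:regular} and derives (f) from (a) together with the well-known braided equivalence $\Rep(D(H)) \approx \Rep(D(H^*))$. The only point worth a footnote is in (c): for a genuine quasi-Hopf algebra the antipode gives an isomorphism of $H$ onto a \emph{twist} of $H^{\op\,\cop}$ rather than onto $H^{\op\,\cop}$ itself, but since twisting does not change $\Rep$ up to monoidal equivalence, Theorem~\ref{thm:FS-invariance} still yields $\nu_n(H^{\op\,\cop}) = \nu_n(H)$ as you claim.
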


The part (a), (b), (c), (d) and (e) are obvious. The part (f) follows from (a) and the well-known fact that $\Rep(D(H))$ and $\Rep(D(H^*))$ are equivalent as braided monoidal categories. Of course, this can be derived directly from the definition of $\nu_n(H)$. We note that the part (e) has appeared in \cite{KMN09}.

$H$ and $L$ are said to be {\em monoidally Morita equivalent} if $\Rep(H)$ and $\Rep(L)$ are equivalent as monoidal categories. By part (a) of the above theorem, we have that if $H$ and $L$ are monoidally Morita equivalent, then $\nu_n(H) = \nu_n(L)$ for every $n$, as we referred in Section~\ref{sec:introduction}.

\section{Group-theoretical categories}
\label{sec:group-theoretical}

\subsection{Formulae for group-theoretical categories}

In this section, we study indicators of group-theoretical categories. We briefly recall the definition. Let $\Gamma$ be a finite group and $\omega \in Z^3(\Gamma, \mathbb{C}^\times)$ a normalized 3-cocycle. If $F$ is a subgroup of $\Gamma$ and $\alpha: F \times F \to \mathbb{C}^\times$ is a function satisfying $\delta\alpha = \omega|_{F \times F \times F}$, $\mathbb{C}_\alpha F = \bigoplus_{x \in F} E_x$ is an algebra in $\Vect_\omega^\Gamma$ with multiplication given by $\alpha$. The category $\mathcal{C}(\Gamma, \omega, F, \alpha)$ is defined to be the monoidal category of $\mathbb{C}_\alpha F$-bimodules in $\Vect_\omega^\Gamma$. A fusion category $\mathcal{C}$ is said to be {\em group-theoretical} if it is monoidally equivalent to the category of such a form. Since every group-theoretical category is pseudo-unitary, it has a canonical pivotal structure \cite[Corollary~8.43]{MR2183279}.

\begin{theorem}
  \label{thm:FS-GT-1}
  For a group-theoretical category $\mathcal{C} = \mathcal{C}(\Gamma, \omega, F, \alpha)$, we have
  \begin{equation}
    \label{eq:FS-group-theoretical}
    \nu_n(\mathcal{C}) = \sum_{g \in \Gamma} \delta_{g^n, 1} \prod_{k = 1}^{n - 1} \omega(g, g^k, g).
  \end{equation}
\end{theorem}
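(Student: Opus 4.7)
The plan is to reduce to the case $F = \{1\}$, $\alpha = 1$—where $\mathcal{C}(\Gamma,\omega,F,\alpha)$ is literally $\Vect_\omega^\Gamma$—and then apply Corollary~\ref{cor:FS-invertible-2} to the invertible simple objects $E_g$. All other information about $F$ and $\alpha$ should wash out upon passing to the Drinfeld center.

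First I would invoke Corollary~\ref{cor:FS-center-2}. Every group-theoretical category is pseudo-unitary, so it suffices to exhibit a braided monoidal equivalence
\[
\mathcal{Z}(\mathcal{C}(\Gamma,\omega,F,\alpha)) \simeq \mathcal{Z}(\Vect_\omega^\Gamma).
\]
This is a well-known result (due essentially to Ostrik, stated in Etingof--Nikshych--Ostrik \cite{MR2183279}): the Drinfeld center of $\mathcal{C}(\Gamma,\omega,F,\alpha)$ depends, up to braided equivalence, only on the pair $(\Gamma,\omega)$; in particular it is braided equivalent to $\mathcal{Z}(\Vect_\omega^\Gamma)$, the case $F = \{1\}$, $\alpha \equiv 1$. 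I would cite this rather than reprove it. This is the central (and only non-mechanical) ingredient, and constitutes the main obstacle.

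Granting this equivalence, Corollary~\ref{cor:FS-center-2} gives $\nu_n(\mathcal{C}) = \nu_n(\Vect_\omega^\Gamma)$, so it remains to evaluate the right-hand side. The simple objects of $\Vect_\omega^\Gamma$ are precisely the one-dimensional graded spaces $\{E_g\}_{g\in\Gamma}$. Each $E_g$ is invertible, so $\FPdim(E_g) = 1$, and pseudo-unitarity yields $\pdim(E_g) = 1$ under the canonical pivotal structure. Hence by the definition~(\ref{eq:FS-def}) and Corollary~\ref{cor:FS-invertible-2},
\[
\nu_n(\Vect_\omega^\Gamma) = \sum_{g \in \Gamma} \nu_n(E_g)\,\pdim(E_g) = \sum_{g \in \Gamma} \delta_{g^n,1} \prod_{k=1}^{n-1} \omega(g, g^k, g),
\]
which is exactly~(\ref{eq:FS-group-theoretical}). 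Once the braided equivalence of centers is cited, the remainder is direct assembly of results already in hand: the monoidal-Morita invariance packaged into Corollary~\ref{cor:FS-center-2}, and the explicit formula for indicators of invertible objects in a twisted group category.
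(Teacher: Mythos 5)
Your proposal is correct and follows essentially the same route as the paper: it invokes the braided equivalence $\mathcal{Z}(\mathcal{C}(\Gamma,\omega,F,\alpha)) \approx \mathcal{Z}(\Vect_\omega^\Gamma)$ (the paper cites Natale and Schauenburg for this), applies Corollary~\ref{cor:FS-center-2}, and then evaluates $\nu_n(\Vect_\omega^\Gamma)$ via Corollary~\ref{cor:FS-invertible-2} on the invertible objects $E_g$. No gaps.
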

\begin{proof}
  By \cite{MR2016657}, see also \cite{MR1822847}, there exists an equivalence $\mathcal{Z}(\mathcal{C}) \approx \mathcal{Z}(\Vect_\omega^\Gamma)$ of braided monoidal categories. Therefore, by Corollary~\ref{cor:FS-center-2}, we have
  \begin{equation*}
    \nu_n(\mathcal{C}) = \nu_n(\Vect_\omega^\Gamma) = \sum_{g \in \Gamma} \nu_n(E_g).
  \end{equation*}
  By Corollary~\ref{cor:FS-invertible-2}, we have the result.
\end{proof}

\begin{remark}
  \label{rem:fs-ind-gt}
  {\rm (i)} Note that the right-hand side of~(\ref{eq:FS-group-theoretical}) first appeared in the paper of Altsch\"uler and Coste \cite[(3.13)]{MR1230426} as the Dijkgraaf-Witten invariant of the lens space $L(n, 1)$ associated with $\Gamma$ and $\omega$.

  {\rm (ii)} For $\omega \in Z^3(\Gamma, \mathbb{C}^\times)$ and $n \ge 1$, define $\widetilde{\omega}_n: \Gamma \to \mathbb{C}$ by
  \begin{equation}
    \label{eq:omega-n}
    \widetilde{\omega}_{n}(g) = \delta_{g^n, 1} \prod_{k = 1}^{n - 1} \omega(g, g^k, g) \quad (g \in \Gamma).
  \end{equation}
  One can easily see that $\widetilde{\omega}_n(g)$ depends only on the cohomology class of the restriction of $\omega$ to the cyclic subgroup generated by $g$. In particular, if $\omega$ is cohomologous to the trivial $3$-cocycle, $\widetilde{\omega}_{n}(g) = \delta_{g^n, 1}$.

  {\rm (iii)} Altsch\"uler and Coste showed that $\widetilde{\omega}_n$ is a class function on $\Gamma$ (see \cite[Appendix]{MR1230426}). As the referee kindly pointed out, this follows from (ii) and the well-known fact in group cohomology that the function
  \begin{equation*}
    \omega^x(a, b, c) = (x a x^{-1}, x b x^{-1}, x c x^{-1}) \quad (a, b, c \in \Gamma)
  \end{equation*}
  is a 3-cocycle cohomologous to $\omega$.

  From the viewpoint of the theory of Frobenius-Schur indicators, the fact that $\widetilde{\omega}_n$ is a class function can be understood as follows: Let $x \in \Gamma$ and define an endofunctor $(-)^x$ on $\Vect_\omega^\Gamma$ by $V^x = (E_x \otimes V) \otimes E_x^*$ ($V \in \Vect_\omega^\Gamma$). Since $E_x$ is an invertible object, this functor extends to a monoidal autofunctor on $\Vect_\omega^\Gamma$. By the invariance of Frobenius-Schur indicators, we have $\nu_n(E_g^x) = \nu_n(E_g)$ for every $g \in \Gamma$. This implies that $\widetilde{\omega}_{n}$ is a class function on $\Gamma$.
\end{remark}

\subsection{Abelian extensions of group algebras}

Let $(F, G)$ be a matched pair of finite groups \cite{MR611561} together with maps
\begin{equation*}
  \triangleleft:  G \times F \to G \quad \text{and} \quad
  \triangleright: G \times F \to F.
\end{equation*}
The bicrossed product is denoted by $F \bicross G$; It is a set $F \times G$ endowed with multiplication given by $(x, g) \cdot (y, h) = (x(g \triangleright y), (g \triangleleft y)h)$ ($x, y \in F$, $g, h \in G$). The set of equivalence classes of abelian extensions
\begin{equation}
  \label{eq:extension}
  1 \to \mathbb{C}^G \to A \to \mathbb{C}F \to 1
\end{equation}
associated with this matched pair is denoted by $\Opext(\mathbb{C}F, \mathbb{C}^G)$. It is known that every Hopf algebra $A$ fitting into an abelian extension~(\ref{eq:extension}) can be constructed from maps $\sigma: G \times F \times F \to \mathbb{C}^\times$ and $\tau: G \times G \times F \to \mathbb{C}^\times$ satisfying certain cocycle conditions. The corresponding Hopf algebra, denoted by $A = \mathbb{C}^G \#_{\sigma, \tau} \mathbb{C}F$, is constructed as follows: For an algebra $K$, we mean by a {\em $K$-ring} an algebra equipped with an algebra map from $K$. As a $\mathbb{C}^G$-ring, $A$ is generated by $u_x$ ($x \in F$) with relations
\begin{equation}
  \label{eq:extension-mul}
  u_x u_y = \sum_{g \in G} \sigma(g; x, y) e_g u_{x y}
  \quad \text{and} \quad u_x e_g = e_{g \triangleleft x^{-1}} u_x
\end{equation}
where $e_g \in \mathbb{C}^G$ is the dual basis of $g \in G$. Note that $A$ is generated by $u_x$ ($x \in F$) and $e_g$ ($g \in G$) as an algebra. The comultiplication of $A$ is the algebra map $\Delta: A \to A \otimes A$ determined by
\begin{equation}
  \label{eq:extension-com}
  \Delta(u_x) = \sum_{h \in G} \tau(g, h; x) e_{g} u_{h \triangleright x} \otimes e_h u_x
  \text{\quad and \quad} \Delta(e_g) = \sum_{h \in G} e_{g h^{-1}} \otimes e_{h}.
\end{equation}
We omit a description of the antipode. For more details, see, e.g., \cite{MR1913439}. If $\sigma$ and $\tau$ are trivial, the corresponding Hopf algebra is called the {\em bismash product} and denoted by $\mathbb{C}^G \# \mathbb{C}F$.

Recall that we used the argument of Natale \cite{MR2016657} in the proof of Theorem~\ref{thm:FS-GT-1}. She also showed that if $A$ is a Hopf algebra fitting into~(\ref{eq:extension}), then $\Rep(A)$ is monoidally equivalent to $\mathcal{C}(F \bicross G, \omega, F, 1)$ where $\omega \in Z^3(F \bicross G, \mathbb{C}^\times)$ is the image of $A \in {\rm Opext}(\mathbb{C}^G, \mathbb{C}F)$ under the map $\overline{\omega}$ appearing in the Kac exact sequence
\begin{equation}
  \label{eq:Kac}
  \begin{split}
    \cdots &
    \mathop{\longrightarrow} H^2(F \bicross G, \mathbb{C}^\times)
    \mathop{\longrightarrow} H^2(F, \mathbb{C}^\times) \oplus H^2(G, \mathbb{C}^\times)
    \mathop{\longrightarrow}^{\partial} \Opext(\mathbb{C}^G, \mathbb{C}F) \\
    & \mathop{\longrightarrow}^{\overline{\omega}} H^3(F \bicross G, \mathbb{C}^\times)
    \mathop{\longrightarrow} H^3(F, \mathbb{C}^\times) \oplus H^3(G, \mathbb{C}^\times)
    \longrightarrow \cdots.
  \end{split}
\end{equation}
If $A = \mathbb{C}^G \#_{\sigma, \tau} \mathbb{C}F$, the corresponding $\omega$ is given by
\begin{equation}
  \label{eq:extension-3-cocycle}
  \omega(x, y, z)
  = \sigma(x_{|G}^{}; y_{|F}^{}, y_{|G}^{} \triangleright z_{|F}^{})
  \tau(x_{|G}^{} \triangleleft y_{|F}^{}, y_{|G}^{}; z_{|F}^{})
\end{equation}
where ${}_{|F}: F \bicross G \to F$ and ${}_{|G}: F \bicross G \to G$ are canonical projections. The following is a direct consequence of Theorem~\ref{thm:FS-GT-1}.

\begin{corollary}
  \label{cor:FS-extension-1}
  Notations are as above. For every $n$, we have
  \begin{equation*}
    \nu_n(\mathbb{C}^G \#_{\sigma, \tau} \mathbb{C}F) = \sum_{g \in F \bicross G} \delta_{g^n, 1} \prod_{k = 1}^{n - 1} \omega(g, g^k, g)
  \end{equation*}
  where $\omega$ is given by~(\ref{eq:extension-3-cocycle}).
\end{corollary}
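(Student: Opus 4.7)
The plan is to combine three ingredients that have already been assembled in the excerpt: Natale's identification of $\Rep(A)$ with a specific group-theoretical category, the monoidal invariance of $\nu_n$ (Theorem~\ref{thm:FS-invariance}), and the explicit formula of Theorem~\ref{thm:FS-GT-1}. Since $\nu_n(A)$ is by definition $\nu_n(\Rep(A))$, everything reduces to an unwinding of these results, with no fresh computation needed.

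First I would invoke Natale's theorem, recalled in the paragraph just before the corollary, which states that for $A = \mathbb{C}^G \#_{\sigma, \tau} \mathbb{C}F$ there is a monoidal equivalence
\begin{equation*}
  \Rep(A) \; \approx \; \mathcal{C}(F \bicross G, \, \omega, \, F, \, 1),
\end{equation*}
where $\omega \in Z^3(F \bicross G, \mathbb{C}^\times)$ is the $3$-cocycle given by formula~(\ref{eq:extension-3-cocycle}). Both sides are pseudo-unitary fusion categories (the left by \cite[Proposition~8.24]{MR2183279}, the right since every group-theoretical category is pseudo-unitary), so each carries its canonical pivotal structure.

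Next I would apply Theorem~\ref{thm:FS-invariance} to conclude $\nu_n(A) = \nu_n(\mathcal{C}(F \bicross G, \omega, F, 1))$ for every $n \ge 1$. Finally, specializing Theorem~\ref{thm:FS-GT-1} to $\Gamma = F \bicross G$ and the above $\omega$ gives
\begin{equation*}
  \nu_n(\mathcal{C}(F \bicross G, \omega, F, 1)) = \sum_{g \in F \bicross G} \delta_{g^n, 1} \prod_{k = 1}^{n-1} \omega(g, g^k, g),
\end{equation*}
which is exactly the claimed formula. There is essentially no obstacle here: the content has already been proved, and the corollary is just the specialization of Theorem~\ref{thm:FS-GT-1} to the group-theoretical data coming from an abelian extension via the Kac sequence~(\ref{eq:Kac}). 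The only point worth verifying in passing is that the $3$-cocycle~(\ref{eq:extension-3-cocycle}) is indeed the one produced by $\overline{\omega}$ applied to the class of $\mathbb{C}^G \#_{\sigma, \tau} \mathbb{C}F$ in $\Opext(\mathbb{C}^G, \mathbb{C}F)$, but this is precisely what the cited formula records, so no further work is required.
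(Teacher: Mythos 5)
Your proposal is correct and is essentially the argument the paper intends: the corollary is stated as a direct consequence of Theorem~\ref{thm:FS-GT-1}, obtained by combining Natale's monoidal equivalence $\Rep(\mathbb{C}^G \#_{\sigma,\tau} \mathbb{C}F) \approx \mathcal{C}(F \bicross G, \omega, F, 1)$ with the invariance of $\nu_n$ under monoidal equivalence (Theorem~\ref{thm:FS-invariance}) and the explicit formula~(\ref{eq:FS-group-theoretical}). Nothing further is needed.
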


More precisely, $\nu_n(\mathbb{C}^G \#_{\sigma, \tau} \mathbb{C}F)$ is given as follows: For a pair $(x, g) \in F \times G$, define $x_n(x, g) \in F$ and $g_n(x, g) \in G$ ($n \ge 1$) inductively by
\begin{align*}
  x_1(x, g) = x, \quad & x_{n + 1}(x, g) = x \cdot (g \triangleright x_n(x, g)), \\
  g_1(x, g) = g, \quad & g_{n + 1}(x, g) = (g_n(x, g) \triangleleft x) \cdot g
\end{align*}
so that $(x, g)^n = (x_n(x, g), g_n(x, g))$ in $F \bicross G$. Then
\begin{equation*}
  \nu_n(\mathbb{C}^G \#_{\sigma, \tau} \mathbb{C}F)
  = \sum_{x \in F, g \in G} \delta_{x_n, 1} \delta_{g_n, 1}
  \prod_{k = 1}^{n - 1} \sigma(g; x_k, g_k \triangleright x) \tau(g \triangleleft x_k, g_k; x),
\end{equation*}
where we abbreviated $x_k(x, g)$ and $g_k(x, g)$ as $x_k$ and $g_k$, respectively. Note that for $n = 2$ this formula has been showed by Kashina, Mason and Montgomery in \cite{MR1919158}.

If $A$ is isomorphic to the bismash product $\mathbb{C}^G \# \mathbb{C}F$, then the corresponding $\omega$ is trivial, and hence we have the following:

\begin{corollary}
  \label{cor:FS-extension-2}
  $\nu_n(\mathbb{C}^G \# \mathbb{C}F) = \# \{ x \in F \bicross G \mid x^n = 1 \}$
\end{corollary}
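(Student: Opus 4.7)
The plan is to deduce this as an immediate specialization of Corollary~\ref{cor:FS-extension-1}. First I would observe that for the bismash product $A = \mathbb{C}^G \# \mathbb{C}F$, by definition the cocycles $\sigma$ and $\tau$ are both trivial, i.e.\ constantly $1$. Substituting $\sigma \equiv 1$ and $\tau \equiv 1$ into the explicit formula~(\ref{eq:extension-3-cocycle}) for the associated $3$-cocycle $\omega$ on $F \bicross G$, one reads off that $\omega(x, y, z) = 1$ for all $x, y, z \in F \bicross G$, so $\omega$ is the trivial $3$-cocycle (in particular, cohomologically trivial).

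Next I would apply Corollary~\ref{cor:FS-extension-1} directly. Since $\omega \equiv 1$, the inner product $\prod_{k=1}^{n-1} \omega(g, g^k, g)$ equals $1$ for every $g \in F \bicross G$, so the formula collapses to
\begin{equation*}
  \nu_n(\mathbb{C}^G \# \mathbb{C}F) = \sum_{g \in F \bicross G} \delta_{g^n, 1} = \# \{ g \in F \bicross G \mid g^n = 1 \},
\end{equation*}
which is the claimed identity. Alternatively, one can invoke Remark~\ref{rem:fs-ind-gt}(ii), which says that if $\omega$ is cohomologous to the trivial $3$-cocycle then $\widetilde{\omega}_n(g) = \delta_{g^n, 1}$ on every cyclic subgroup; summing over $g$ gives the same conclusion.

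There is essentially no obstacle here: the only thing to verify is the bookkeeping in~(\ref{eq:extension-3-cocycle}) with trivial $\sigma, \tau$, which is purely formal. The content of the statement is really carried by Corollary~\ref{cor:FS-extension-1}, and this corollary is the clean ``Frobenius-style'' consequence of the vanishing of the cocycle obstruction in the Kac exact sequence~(\ref{eq:Kac}) for bismash products.
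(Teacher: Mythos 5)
Your proposal is correct and is essentially the paper's own argument: the paper likewise notes that for the bismash product the cocycles $\sigma$ and $\tau$ are trivial, hence the associated $\omega$ from~(\ref{eq:extension-3-cocycle}) is trivial, and then Corollary~\ref{cor:FS-extension-1} collapses to the stated count of solutions of $x^n = 1$ in $F \bicross G$.
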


More precisely, the right hand side of the above equation is equal to the number of pairs $(x, g) \in F \times G$ such that $x_n(x, g) = 1$ and $g_n(x, g) = 1$. This has been showed by Kashina, Montgomery and Ng in \cite{KMN09} in a different way.

\subsection{Arithmetic properties of indicators}

We have shown that indicators of a pivotal fusion category $\mathcal{C}$ are cyclotomic integers in Proposition~\ref{prop:FS-cyclo}. This proposition can be refined in the case where $\mathcal{C}$ is group-theoretical. To start with, we observe the right-hand side of~(\ref{eq:FS-group-theoretical}) with $\Gamma = \mathbb{Z}_N$. For a while, we fix a positive integer $n$ and a normalized 3-cocycle $\omega \in Z^3(\mathbb{Z}_N, \mathbb{C}^\times)$. Let $\widetilde{\omega}_n: \mathbb{Z}_N \to \mathbb{C}$ denote the function given by~(\ref{eq:omega-n}) with $\Gamma = \mathbb{Z}_N$.

\begin{lemma}
  \label{lem:omega-n}
  Let $e$ be the cohomological order of $\omega$, that is, the order of the cohomology class of $\omega$ in $H^3(\mathbb{Z}_N, \mathbb{C}^\times)$. Suppose that $i \in \mathbb{Z}_N$ satisfies $n i = 0$. Then the following hold:
  \begin{enumalph}
  \item $\widetilde{\omega}_n(i)$ is a root of unity whose order divides all $N$, $n$ and $e$.
  \item $\widetilde{\omega}_n(a i) = \widetilde{\omega}_n(i)^{a^2}$ for every $a \in \mathbb{Z}_N$.
  \end{enumalph}
\end{lemma}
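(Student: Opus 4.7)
The plan is to reduce to an explicit computation with a canonical representative of the cohomology class of $\omega$. Recall that $H^3(\mathbb{Z}_N, \mathbb{C}^\times) \cong \mathbb{Z}_N$, with the distinguished family of representatives
\begin{equation*}
  \omega_s(a, b, c) = \zeta^{\, s \cdot a \cdot (b + c - [b + c]_N)},
  \qquad a, b, c \in \{0, 1, \ldots, N - 1\},
\end{equation*}
where $\zeta = \exp(2\pi\sqrt{-1}/N^2)$ and $[\,\cdot\,]_N$ denotes the canonical representative modulo $N$; the cohomology class of $\omega_s$ has order $N/\gcd(N, s)$. By Remark~\ref{rem:fs-ind-gt}~(ii), $\widetilde{\omega}_n(i)$ depends only on the cohomology class of the restriction of $\omega$ to $\langle i \rangle$, and \emph{a fortiori} only on the class of $\omega$ itself in $H^3(\mathbb{Z}_N, \mathbb{C}^\times)$. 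Thus I may assume $\omega = \omega_s$ for some $s \in \{0, 1, \ldots, N - 1\}$, in which case $e = N/\gcd(N, s)$.

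Next I will compute $\widetilde{\omega}_n(i)$ by telescoping. Identifying $i \in \mathbb{Z}_N$ with its canonical integer representative and writing $r_k = [k i]_N$, the definition gives
\begin{equation*}
  \omega_s(i, k i, i) = \zeta^{\, s \cdot i \cdot (r_k + i - r_{k+1})}.
\end{equation*}
Since $r_1 = i$ and $r_n = 0$ (the latter from the hypothesis $n i \equiv 0 \pmod N$), the identity $\sum_{k=1}^{n-1}(r_k + i - r_{k+1}) = n i$ yields the closed form
\begin{equation*}
  \widetilde{\omega}_n(i) = \zeta^{\, s n i^2}.
\end{equation*}
A short check using $N \mid n i$ confirms that the right-hand side is independent of the chosen integer lift of $i$, so it defines a function of $i \in \mathbb{Z}_N$ under the hypothesis $n i = 0$.

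From this closed form, both conclusions follow by elementary number theory. Part (b) is immediate: $\widetilde{\omega}_n(a i) = \zeta^{\, s n a^2 i^2} = \widetilde{\omega}_n(i)^{a^2}$. For part (a), $\widetilde{\omega}_n(i)$ is a root of unity of order $d = N^2/\gcd(s n i^2, N^2)$, and the three required divisibilities all flow from the single arithmetic input $N \mid n i$: this gives $N \mid s n i^2$ (hence $d \mid N$) and $N^2 \mid n^2 i^2$ (hence $d \mid n$); combined with $\gcd(N, s) \mid s$, it also yields $N \gcd(N, s) \mid s n i^2$, whence $d \mid N/\gcd(N, s) = e$.

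The main place where care is required is the telescoping step and the verification that the resulting closed-form formula is truly well-defined on $\mathbb{Z}_N$; once that is in hand, parts (a) and (b) reduce to routine manipulations of divisibility.
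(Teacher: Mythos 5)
Your proposal is correct and follows essentially the same route as the paper: reduce to the standard generator $\psi_N$ of $H^3(\mathbb{Z}_N,\mathbb{C}^\times)$ via Remark~\ref{rem:fs-ind-gt}~(ii), telescope the product defining $\widetilde{\omega}_n(i)$ to get the closed form $\exp(2\pi\sqrt{-1}\,snî^2/N^2)$, check independence of the integer lift, and finish with elementary divisibility. The only cosmetic difference is that the paper normalizes the exponent as $r=(N/e)s$ with $s$ coprime to $N$ before reading off (a), whereas you keep $s$ general and compute the order of the root of unity directly; both are the same computation.
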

\begin{proof}
  Following a description of \cite[Appendix~E]{MR1002038}, $H^3(\mathbb{Z}_N, \mathbb{C}^\times)$ is a cyclic group of order $N$ generated by the cohomology class of
  \begin{equation}
    \label{eq:cohomology}
    \psi_N^{}(j, k, l) = \exp\left(
      \frac{2\pi\sqrt{-1}}{N^2} \cdot \overline{j\mathstrut}
      (\overline{k\mathstrut} + \overline{l\mathstrut} - \overline{k + l\mathstrut})
    \right) \quad (j, k, l \in \mathbb{Z}_N),
  \end{equation}
  where $\overline{a}$ denotes the integer between $0$ and $N - 1$ representing an element $a \in \mathbb{Z}_N$. Since the function $\widetilde{\omega}_n$ depends only on the cohomology class of $\omega$, we may assume that $\omega = (\psi_N^{})^r$ with $r = (N/e) \cdot s$ for some $s \in \mathbb{Z}$ coprime to $N$. Then, we compute
  \begin{equation*}
    \widetilde{\omega}_n(i)
    = \exp\left(
      \frac{2\pi r \sqrt{-1}}{N^2} \sum_{k = 1}^{n - 1} \overline{i\mathstrut}
      (\overline{i\mathstrut} + \overline{k i \mathstrut} - \overline{(k+1)i \mathstrut})
    \right)
    =  \exp\left(\frac{2\pi s \sqrt{-1}}{e} \cdot \frac{n (\overline{i})^2}{N} \right).
  \end{equation*}
  By an elementary number-theoretical argument, we can replace $\overline{i}$ in the above equation with $i$. Therefore, we obtain a formula
  \begin{equation*}
    \widetilde{\omega}_n(i) = \exp\left(\frac{2\pi s \sqrt{-1}}{e} \cdot \frac{n i^2}{N} \right)
    = \exp\left(\frac{2\pi s \sqrt{-1}}{n} \cdot \frac{n^2 i^2}{N e} \right).
  \end{equation*}
  Now (b) is obvious. Note that $e$ is a divisor of $N$. (a) follows from that both $n i^2 / N$ and $n^2 i^2 / e N$ are integers under our assumption that $n i = 0$ in $\mathbb{Z}_N$.
\end{proof}

\begin{lemma}
  \label{lem:GT-ind-cyclic-2}
  Notations are as above. Let $S = \sum_{i \in \mathbb{Z}_N} \widetilde{\omega}_n(i)$.
  \begin{enumalph}
  \item Suppose that $\omega$ is cohomologous to $(\psi_N^{})^r$ where $\psi_N^{} \in Z^3(\mathbb{Z}_N, \mathbb{C}^\times)$ is the normalized 3-cocycle given by~(\ref{eq:cohomology}). Then $S = S(n r / d, d)$ where $d = \gcd(N, n)$ is the greatest common divisor of $N$ and $n$ and
    \begin{equation*}
      S(a, m) := \sum_{i = 0}^{m - 1} \exp\left(\frac{2\pi\sqrt{-1}}{m} \cdot a i^2 \right)
      \quad (a \in \mathbb{Z}, m \in \mathbb{N}).
    \end{equation*}
  \item Let $e$ be the cohomological order of $\omega$. There exists an integer $a$ and a common divisor $c$ of $d$ and $e$ such that $S = (d/c) \cdot S(a, c)$.
  \end{enumalph}
\end{lemma}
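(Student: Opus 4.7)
For part~(a), I plan to reduce to the case $\omega = (\psi_N^{})^r$ by invoking Remark~\ref{rem:fs-ind-gt}(ii), since $\widetilde{\omega}_n$ is a cohomology invariant. The proof of Lemma~\ref{lem:omega-n} then already contains the key formula $\widetilde{\omega}_n(i) = \exp\bigl(2\pi\sqrt{-1}\, s n i^2/(eN)\bigr)$ when $ni \equiv 0 \pmod{N}$, where $r = (N/e)\,s$ with $\gcd(s, e) = 1$. Since $ni \equiv 0 \pmod{N}$ is equivalent to $(N/d) \mid i$ in $\mathbb{Z}$ (using $\gcd(n/d, N/d) = 1$), only the terms $i = (N/d)j$ with $j = 0, \ldots, d-1$ contribute to $S$. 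A short calculation, using $s \cdot (N/e) = r$, reduces the exponent to $2\pi\sqrt{-1}(nr/d) j^2/d$, giving $S = S(nr/d, d)$.

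For part~(b), the plan is to take $c = \gcd(d, e)$ and apply the elementary Gauss-sum scaling identity $S(b u, b m) = b \cdot S(u, m)$, which follows from the fact that $j^2 \bmod m$ depends only on $j \bmod m$ (so the $bm$ summands group naturally into $b$ copies of the same sum). With $b = d/c$, this identity applied to the outcome of part~(a) yields $S = (d/c) \cdot S(a, c)$ for $a = c(nr/d)/d$, \emph{provided} that $d/c$ divides $nr/d$. The divisibility will be verified as follows: writing $d' := N/e = \gcd(N, r)$, we have $r = d' s$ and $nr/d = (n/d)\,d'\,s$, so it suffices to show $d/c \mid d'$. Since $N = d' e$ and $d \mid N$, we get $(d/c) \mid d'(e/c)$; and since $\gcd(d/c, e/c) = 1$ by the choice $c = \gcd(d, e)$, Euclid's lemma forces $(d/c) \mid d'$.

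The only genuine obstacle is the arithmetic divisibility $d/c \mid nr/d$; everything else reduces to a direct substitution into the formula of Lemma~\ref{lem:omega-n} for part~(a), and to the routine scaling behavior of finite quadratic exponential sums for part~(b).
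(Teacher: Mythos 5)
Your argument is correct. Part~(a) is essentially identical to the paper's proof: both restrict the sum to $i=(N/d)j$ and substitute into the exponential formula from the proof of Lemma~\ref{lem:omega-n} (the paper phrases the substitution via Lemma~\ref{lem:omega-n}(b), writing $S=\sum_{j=0}^{d-1}\widetilde{\omega}_n(N/d)^{j^2}$ with $\widetilde{\omega}_n(N/d)=\exp(2\pi\sqrt{-1}\cdot nr/d^2)$, which is the same computation). For part~(b) you take a slightly different route. The paper chooses $c$ to be the \emph{order} of the root of unity $\widetilde{\omega}_n(N/d)$; by Lemma~\ref{lem:omega-n}(a) this order divides $N$, $n$ and $e$, hence divides both $d$ and $e$ automatically, and the identity $S=(d/c)S(a,c)$ then falls out immediately because $\widetilde{\omega}_n(N/d)^{j^2}$ depends only on $j \bmod c$ and $c\mid d$ --- no divisibility of the Gauss-sum parameter needs to be checked. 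You instead fix $c=\gcd(d,e)$ and invoke the scaling identity of Lemma~\ref{lem:quadratic-sum}(b), which forces you to verify that $d/c$ divides $nr/d$; your verification (via $d'=N/e=\gcd(N,r)$, the factorization $nr/d=(n/d)d's$, and Euclid's lemma with $\gcd(d/c,e/c)=1$) is correct, but it is extra arithmetic that the paper's choice of $c$ sidesteps entirely. Both choices of $c$ are legitimate witnesses for the existential statement (your $c$ is a multiple of the paper's, and the two resulting expressions are reconciled by the same scaling identity), so the proposal stands; the paper's version is just the leaner packaging of the same periodicity idea.
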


The part (a) has been showed by Altsch\"uler and Coste \cite[(3.18)]{MR1230426}.

\begin{proof}
  The solutions $i$ of the congruence equation $n i \equiv 0 \pmod{N}$ are $i = (N / d) \cdot j$ ($j = 0, 1, \cdots, d - 1$). By Lemma~\ref{lem:omega-n}~(b), we have a formula
  \begin{equation}
    \label{eq:GT-ind-cyclic-2}
    S = \sum_{j = 0}^{d - 1} \widetilde{\omega}_n(N/d)^{j^2}.
  \end{equation}

  \textup{(a)} By arguments in the proof of Lemma~\ref{lem:omega-n}, $\widetilde{\omega}_n(N/d) = \exp(2\pi\sqrt{-1} \cdot n r/d^2)$. The proof is done by comparing the definition of $S(a, m)$ with~(\ref{eq:GT-ind-cyclic-2}).

  \textup{(b)} Let $c$ be the order of $\widetilde{\omega}_n(N/d)$. By Lemma~\ref{lem:omega-n}~(a), $c$ divides both $d$ and $e$. Fix $a \in \mathbb{Z}$ such that $\widetilde{\omega}_n(N/d) = \exp(2\pi\sqrt{-1} \cdot a/c)$. Then our claim follows immediately from~(\ref{eq:GT-ind-cyclic-2}).
\end{proof}

The above $S(a, m)$ is called the quadratic Gauss sum. It can be computed by the following Lemma~\ref{lem:quadratic-sum} which is well-known in the number theory. The reader may refer to, for example, \cite[Chapter~V]{MR0174513} for the proof.

\begin{lemma}
  \label{lem:quadratic-sum}
  Let $a$ and $m$ be positive integers. We denote by $\legendre{\ }{n}$ the Jacobi-Legendre symbol for a positive odd integer $n$.
  \begin{enumalph}
  \item $S(a + k m, m) = S(a, m)$ for every $k \in \mathbb{Z}$.
  \item For any common divisor $d$ of $a$ and $m$, $S(a, m) = d \cdot S(a / d, m / d)$.
  \item Suppose that $a$ and $m$ are relatively prime. Then, if $m$ is odd,
    \begin{equation*}
      S(a, m) = \legendre{a}{m} \sqrt{m} \times
      \begin{cases}
        1         & \text{if $m \equiv 1 \pmod{4}$}, \\
        \sqrt{-1} & \text{otherwise}.
      \end{cases}
    \end{equation*}
    If $m \equiv 0 \pmod{4}$,
    \begin{equation*}
      S(a, m) = \legendre{m}{a} \sqrt{m} \times
      \begin{cases}
        1 + \sqrt{-1} & \text{if $a \equiv 1 \pmod{4}$}, \\
        1 - \sqrt{-1} & \text{otherwise}.
      \end{cases}
    \end{equation*}
    If $m \equiv 2 \pmod{4}$, $S(a, m) = 0$.
  \end{enumalph}
\end{lemma}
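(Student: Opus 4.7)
The plan is to dispatch (a) and (b) by direct manipulations of the defining sum, and to reduce (c) to a product over prime powers via a multiplicativity derived from the Chinese Remainder Theorem; the Gauss evaluation for odd primes will be the only genuinely difficult input.

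For (a), since $k i^2 \in \mathbb{Z}$, the factor $\exp(2\pi\sqrt{-1} k i^2)$ equals $1$, so termwise the summand is unchanged. For (b), writing $a = d a'$, $m = d m'$ and reindexing by $i = j m' + r$ with $0 \le j < d$ and $0 \le r < m'$, the expansion $i^2 = j^2 m'^2 + 2 j r m' + r^2$ shows that $a' i^2 / m' - a' r^2 / m' \in \mathbb{Z}$; hence the summand depends only on $r$ and the sum factors as $d \cdot S(a/d, m/d)$.

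The key to (c) is the following multiplicativity, which I would establish first: if $m = m_1 m_2$ with $\gcd(m_1, m_2) = 1$, then
\begin{equation*}
  S(a, m_1 m_2) = S(a m_2, m_1) \, S(a m_1, m_2).
\end{equation*}
To prove it, the Chinese Remainder Theorem parametrizes $i \bmod m$ by pairs $(i_1, i_2) \in \mathbb{Z}/m_1 \times \mathbb{Z}/m_2$ via $i \equiv m_2 i_1 + m_1 i_2 \pmod{m}$, and a direct computation combined with (a) yields $a i^2 / m \equiv a m_2 i_1^2 / m_1 + a m_1 i_2^2 / m_2 \pmod{\mathbb{Z}}$. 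This reduces the evaluation of $S(a, m)$ with $\gcd(a, m) = 1$ to a product of factors $S(a'', p^k)$ indexed by the prime powers in $m$.

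For $m \equiv 2 \pmod 4$, the factor $S(a'', 2) = 1 + \exp(\pi \sqrt{-1} a'')$ with $a''$ odd vanishes, so $S(a, m) = 0$. For $m$ odd, the classical identity $S(a, p^k) = \legendre{a}{p^k} S(1, p^k)$, obtainable via the substitution $i \mapsto b i$ on $(\mathbb{Z}/p^k)^\times$ together with a residue-class analysis, reduces matters to the evaluation of $S(1, p^k)$; a further substitution $i = j + p^{k-1} r$ inductively reduces $k \ge 2$ to $k - 2$. The case $m \equiv 0 \pmod 4$ is handled by the same strategy after direct computation at $m = 4$ and $m = 8$. The principal obstacle is the evaluation of $S(1, p)$ for odd primes $p$ with its correct sign depending on $p \bmod 4$---this is Gauss's classical quadratic-sum theorem, which requires genuine input beyond elementary manipulations and is standardly established via the theta-function transformation or a finite Fourier argument, as carried out in \cite[Chapter~V]{MR0174513}.
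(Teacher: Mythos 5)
The paper does not actually prove this lemma: it declares the statement ``well-known in the number theory'' and refers the reader to \cite[Chapter~V]{MR0174513}, so there is no internal proof to compare yours against. Your outline is the standard textbook argument and is essentially correct. Parts (a) and (b) are fine as written: in (a) the extra factor $\exp(2\pi\sqrt{-1}\,k i^2)$ is $1$, and in (b) the reindexing $i = jm' + r$ makes the summand independent of $j$, producing the factor $d$. The multiplicativity $S(a, m_1 m_2) = S(am_2, m_1)\,S(am_1, m_2)$ via the CRT parametrization $i \equiv m_2 i_1 + m_1 i_2$ is correct (the cross term $2ai_1i_2$ is an integer), and it cleanly yields the vanishing for $m \equiv 2 \pmod 4$ and the reduction to prime powers. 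The remaining steps --- the identity $S(a,p^k) = \legendre{a}{p^k} S(1,p^k)$ for odd $p$, the descent from $p^k$ to $p^{k-2}$, the base cases $m = 4, 8$ for the even part, and above all the determination of the sign of $S(1,p)$ --- are all standard but nontrivial; you correctly isolate the last of these as the genuine input and defer it to the same reference the paper cites. Since the paper itself outsources the entire lemma, your sketch is, if anything, more explicit than what the paper provides; to make it a complete proof you would still need to carry out the quadratic-residue/non-residue analysis behind $S(a,p^k) = \legendre{a}{p^k}S(1,p^k)$ and verify that the $2$-power local factors assemble into the stated $\legendre{m}{a}(1 \pm \sqrt{-1})$ form, which requires the quadratic reciprocity supplement for the Jacobi symbol.
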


Since every finite group is a union of its cyclic subgroups, above results are very helpful for our purpose. Fix a group-theoretical category $\mathcal{C} = \mathcal{C}(\Gamma, \omega, F, \alpha)$ till the end of this subsection. Let $c(\omega)$ be the least common multiple of the cohomological orders of the restrictions of $\omega$ to all cyclic subgroups of $\Gamma$. One can easily see that $c(\omega)$ divides both the exponent of $\Gamma$ and the cohomological order of $\omega$.

We give a refinement of Proposition~\ref{prop:FS-cyclo} by using $c(\omega)$. For a positive integer $m$, let $R(m)$ be the subring of $\mathbb{C}$ generated by the set $\{ S(a, d) \mid d|m, 0 < a < d \}$. For example, $R(1) = R(2) = \mathbb{Z}$, $R(3) = \mathbb{Z}[\sqrt{-3}]$ and $R(4) = \mathbb{Z}[2\sqrt{-1}]$. It is obvious by the definition that $R(m) \subset R(m')$ if $m$ divides $m'$.

\begin{lemma}
  \label{lem:GT-ind-arith-1}
  $\nu_n(\mathcal{C}) \in R(d)$ where $d = \gcd(n, c(\omega))$.
\end{lemma}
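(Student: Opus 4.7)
The plan is to reduce the sum over $\Gamma$ to sums over cyclic subgroups via Möbius inversion, then apply Lemma~\ref{lem:GT-ind-cyclic-2}(b) to each piece. By Theorem~\ref{thm:FS-GT-1} we have $\nu_n(\mathcal{C}) = \sum_{g \in \Gamma} \widetilde{\omega}_n(g)$. Since every element of $\Gamma$ generates a unique cyclic subgroup, this decomposes as
$$\nu_n(\mathcal{C}) \;=\; \sum_{C \text{ cyclic subgroup of } \Gamma} T_C, \qquad T_C \,:=\, \sum_{g \in C,\ \langle g \rangle = C} \widetilde{\omega}_n(g).$$
So it suffices to prove $T_C \in R(d)$ for each cyclic $C \leq \Gamma$.

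Fix such a $C$ of order $m$, and for each divisor $d' \mid m$ let $C_{d'} \leq C$ denote its unique subgroup of order $d'$. By Remark~\ref{rem:fs-ind-gt}(ii), the value $\widetilde{\omega}_n(g)$ depends only on $\omega|_{\langle g \rangle}$, so the restriction of $\widetilde{\omega}_n$ to $C_{d'}$ coincides with the function $\widetilde{\omega|_{C_{d'}}}_n$ of Lemma~\ref{lem:GT-ind-cyclic-2}. Grouping elements of $C$ by the subgroup they generate yields
$$\sum_{g \in C_{d'}} \widetilde{\omega}_n(g) \;=\; \sum_{d'' \mid d'} T_{C_{d''}},$$
and Möbius inversion on the divisor lattice therefore gives
$$T_C \;=\; \sum_{d' \mid m} \mu(m/d') \sum_{g \in C_{d'}} \widetilde{\omega}_n(g).$$

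Now apply Lemma~\ref{lem:GT-ind-cyclic-2}(b) to each inner sum, with the cyclic group $C_{d'}$ and cocycle $\omega|_{C_{d'}}$: there exist an integer $a$ and a common divisor $c'$ of $\gcd(d', n)$ and the cohomological order $e'$ of $\omega|_{C_{d'}}$ such that $\sum_{g \in C_{d'}} \widetilde{\omega}_n(g) = (\gcd(d',n)/c') \cdot S(a, c')$. By the definition of $c(\omega)$, we have $e' \mid c(\omega)$, hence $c' \mid c(\omega)$; combined with $c' \mid n$, this gives the crucial divisibility $c' \mid d = \gcd(n, c(\omega))$. Therefore $S(a, c') \in R(d)$ by definition, the integer prefactor lies in $\mathbb{Z} \subseteq R(d)$, and summing the Möbius-weighted contributions (with integer weights) keeps us inside the ring $R(d)$. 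So $T_C \in R(d)$, and hence $\nu_n(\mathcal{C}) \in R(d)$.

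The routine obstacle is the bookkeeping for the divisibility chain $c' \mid \gcd(n, c(\omega))$; everything else is a direct assembly of Theorem~\ref{thm:FS-GT-1}, Remark~\ref{rem:fs-ind-gt}(ii), and Lemma~\ref{lem:GT-ind-cyclic-2}(b). The essential insight is that even though $\Gamma$ itself need not be cyclic, the formula in Theorem~\ref{thm:FS-GT-1} is already a sum of contributions each of which is controlled by the restriction of $\omega$ to a cyclic subgroup, so the cyclic-case analysis of Lemma~\ref{lem:GT-ind-cyclic-2} is exactly what one needs after one partitions $\Gamma$ by generated cyclic subgroup.
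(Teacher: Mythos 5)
Your proof is correct and follows essentially the same route as the paper: both reduce $\nu_n(\mathcal{C})$ to an integer linear combination of sums of $\widetilde{\omega}_n$ over full cyclic subgroups of $\Gamma$ and then invoke Remark~\ref{rem:fs-ind-gt}~(ii) together with Lemma~\ref{lem:GT-ind-cyclic-2}~(b), via exactly the divisibility $c' \mid \gcd(n, c(\omega))$ that you isolate. The only difference is bookkeeping: you partition $\Gamma$ by generated cyclic subgroup and apply M\"obius inversion on each divisor lattice, whereas the paper writes $\{ g \in \Gamma \mid g^n = 1 \}$ as the union of the cyclic subgroups of order dividing $n$ and uses inclusion--exclusion over their (again cyclic) intersections.
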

\begin{proof}
  Let $C_1, \cdots, C_m$ be all cyclic subgroups of $\Gamma$ whose orders are divisors of $n$. $C_{i_1} \cap \cdots \cap C_{i_k}$ is denoted by $C_{i_1 \cdots i_k}$ for convention. For a subset $X \subset \Gamma$, set
  \begin{equation}
    \label{eq:GT-partial-sum}
    W_n(X) = \sum_{g \in X} \delta_{g^n, 1} \prod_{k = 1}^{n - 1} \omega(g, g^k, g).
  \end{equation}
  By Theorem~\ref{thm:FS-GT-1}, $\nu_n(\mathcal{C}) = W_n(X)$ where $X = \{ g \in \Gamma \mid g^n = 1 \}$. Since $X = \bigcup_{i = 1}^m C_i$,
  \begin{equation*}
    \nu_n(\mathcal{C}) = \sum_{k = 1}^{m} \sum_{1 \le i_1 < \cdots < i_k \le m} (-1)^{k - 1} W_n(C_{i_1 \cdots i_k}).
  \end{equation*}
  Therefore, it suffices to prove that $W_n(C_{i_1 \cdots i_k}) \in R(d)$ for every $i_1, \cdots, i_k$. This follows from Remark~\ref{rem:fs-ind-gt}~(ii) and Lemma~\ref{lem:GT-ind-cyclic-2}~(b), since the cohomological order of the restriction of $\omega$ to $C_{i_1 \cdots i_k}$ divides both $n$ and $c(\omega)$.
\end{proof}

The following are direct consequences of the above lemma.

\begin{corollary}
  \label{cor:GT-ind-arith-2}
  $\nu_n(\mathcal{C}) \in R(c(\omega))$ for every $n \ge 1$.
\end{corollary}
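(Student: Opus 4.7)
The plan is essentially immediate from the preceding Lemma~\ref{lem:GT-ind-arith-1}. That lemma already gives $\nu_n(\mathcal{C}) \in R(d)$ with $d = \gcd(n, c(\omega))$, so the only remaining ingredient is monotonicity of the family $\{R(m)\}$ with respect to divisibility, which was noted just before the lemma: $R(m) \subset R(m')$ whenever $m \mid m'$.

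Concretely, I would first observe that by definition $d = \gcd(n, c(\omega))$ is a divisor of $c(\omega)$. Therefore every generator $S(a,d')$ of $R(d)$, with $d' \mid d$ and $0 < a < d'$, is also a generator of $R(c(\omega))$, whence $R(d) \subset R(c(\omega))$. Applying Lemma~\ref{lem:GT-ind-arith-1} then yields
\begin{equation*}
  \nu_n(\mathcal{C}) \in R(d) \subset R(c(\omega)),
\end{equation*}
which is the claim.

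There is no real obstacle here; the statement is a cleaned-up, $n$-independent reformulation of the previous lemma, and the entire point of introducing $c(\omega)$ before stating it is precisely to absorb the dependence on $n$ into a single ring that works uniformly. I would keep the written proof to one or two sentences, simply citing Lemma~\ref{lem:GT-ind-arith-1} together with the divisibility-monotonicity of $R(-)$.
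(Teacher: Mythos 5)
Your argument is correct and is exactly the intended one: the paper states this corollary as a direct consequence of Lemma~\ref{lem:GT-ind-arith-1}, relying on precisely the monotonicity $R(m) \subset R(m')$ for $m \mid m'$ noted just before that lemma. Nothing is missing.
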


\begin{corollary}
  Suppose that $c(\omega) \le 2$. Then $\nu_n(\mathcal{C}) \in \mathbb{Z}$ for every $n \ge 1$.
\end{corollary}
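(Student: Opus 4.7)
The plan is to deduce this corollary directly from the preceding Corollary~\ref{cor:GT-ind-arith-2} together with the explicit identification of the rings $R(m)$ for small $m$. First, I would observe that by hypothesis $c(\omega) \in \{1, 2\}$, so that $R(c(\omega))$ is one of $R(1)$ or $R(2)$. Both of these rings were noted, just before Lemma~\ref{lem:GT-ind-arith-1}, to coincide with $\mathbb{Z}$: indeed, $R(1)$ is generated over $\mathbb{Z}$ by the empty set of Gauss sums, while $R(2)$ is additionally generated by $S(a, d)$ for $d \mid 2$ and $0 < a < d$, i.e.\ only by $S(1, 2) = 1 + \exp(\pi\sqrt{-1}) = 0 \in \mathbb{Z}$.

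Having established $R(c(\omega)) = \mathbb{Z}$, the conclusion is immediate from Corollary~\ref{cor:GT-ind-arith-2}, which asserts $\nu_n(\mathcal{C}) \in R(c(\omega))$ for every $n \ge 1$. There is essentially no obstacle here; the real content has already been absorbed into Lemma~\ref{lem:GT-ind-arith-1} and Corollary~\ref{cor:GT-ind-arith-2}. If one wished to avoid quoting the values of $R(1)$ and $R(2)$ from the preceding paragraph, the alternative would be to reprove the statement from scratch using Lemma~\ref{lem:GT-ind-cyclic-2}~(b): when $c(\omega) \le 2$, the divisor $c$ appearing there is at most $2$, so the partial sums $W_n(C_{i_1 \cdots i_k})$ defined in (\ref{eq:GT-partial-sum}) reduce to $(d/c) \cdot S(a, c)$ with $c \in \{1, 2\}$, both of which are integers (the case $c = 2$ gives the vanishing sum above), and then inclusion-exclusion over the cyclic subgroups of $\Gamma$ of order dividing $n$ finishes the argument.
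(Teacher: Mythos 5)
Your proposal is correct and is exactly the paper's intended argument: the paper states this corollary as a direct consequence of Lemma~\ref{lem:GT-ind-arith-1} (via Corollary~\ref{cor:GT-ind-arith-2}), having already recorded $R(1) = R(2) = \mathbb{Z}$ in the paragraph defining $R(m)$. Your explicit verification that $S(1,2) = 0$ and the alternative route through Lemma~\ref{lem:GT-ind-cyclic-2}~(b) are both sound but add nothing beyond what the paper already absorbed into the preceding results.
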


We are interested in arithmetic properties of indicators. Let $K$ be the subfield of $\mathbb{C}$ generated by $\{ \nu_n(\mathcal{C}) \}_{n \ge 1}$. $K$ has the following property:

\begin{proposition}
  $K/\mathbb{Q}$ is Galois. If $K \ne \mathbb{Q}$, the Galois group ${\rm Gal}(K/\mathbb{Q})$ is isomorphic to a direct product of finitely many $\mathbb{Z}_2$'s. In particular, the degree of the extension $K/\mathbb{Q}$ is a power of two.
\end{proposition}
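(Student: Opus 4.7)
The plan is to combine Corollary~\ref{cor:GT-ind-arith-2} with the explicit evaluation of the quadratic Gauss sums provided by Lemma~\ref{lem:quadratic-sum} in order to embed $K$ into an explicit multiquadratic extension of $\mathbb{Q}$; once such an embedding is established, the proposition follows from elementary Galois theory.

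Set $m = c(\omega)$ and let $p_1, \dots, p_s$ denote the distinct odd primes dividing $m$. The key technical step is to prove the inclusion
\begin{equation*}
  R(m) \subset L := \mathbb{Q}(\sqrt{-1}, \sqrt{p_1}, \dots, \sqrt{p_s}).
\end{equation*}
For a generator $S(a, d)$ of $R(m)$, with $d \mid m$ and $0 < a < d$, parts (a) and (b) of Lemma~\ref{lem:quadratic-sum} reduce us to the coprime case $\gcd(a,d)=1$. Part (c) then expresses $S(a, d)$ either as $0$, or as $\pm \sqrt{\pm d}$, or as $(1 \pm \sqrt{-1}) \sqrt{d}$. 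Writing $d = q^2 d'$ with $d'$ squarefree exhibits $\sqrt{d}$ as an element of $\mathbb{Q}(\sqrt{p_1}, \dots, \sqrt{p_s})$, and the possible factor $1 \pm \sqrt{-1}$ is absorbed by the generator $\sqrt{-1}$ of $L$.

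Given this inclusion, Corollary~\ref{cor:GT-ind-arith-2} yields $\nu_n(\mathcal{C}) \in L$ for every $n \ge 1$, so $K \subset L$. The extension $L/\mathbb{Q}$ is Galois, and its Galois group is an elementary abelian $2$-group. Since this group is abelian, every subgroup is normal, and the Galois correspondence guarantees that every intermediate field of $L/\mathbb{Q}$, in particular $K$, is Galois over $\mathbb{Q}$. Moreover
\begin{equation*}
  \mathrm{Gal}(K/\mathbb{Q}) \cong \mathrm{Gal}(L/\mathbb{Q})/\mathrm{Gal}(L/K)
\end{equation*}
is a quotient of an elementary abelian $2$-group, hence itself of the form $(\mathbb{Z}/2)^{t}$ for some $t \ge 0$. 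If $K \ne \mathbb{Q}$ then $t \ge 1$, and consequently $[K : \mathbb{Q}] = 2^{t}$ is a power of two.

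The sole technical hurdle is the embedding $R(m) \subset L$, which rests on the Gauss-sum evaluation in Lemma~\ref{lem:quadratic-sum}(c); everything after that is a formal consequence of the Galois theory of multiquadratic extensions.
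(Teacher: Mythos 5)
Your argument follows the paper's proof exactly in outline: embed $K$ into an explicit multiquadratic field $L$ via Corollary~\ref{cor:GT-ind-arith-2} and Lemma~\ref{lem:quadratic-sum}, then invoke the Galois theory of $L/\mathbb{Q}$. The Galois-theoretic half is correct and is precisely what the paper does. However, the embedding step has a genuine gap: you adjoin only the \emph{odd} primes dividing $m = c(\omega)$, and the assertion that $\sqrt{d} \in \mathbb{Q}(\sqrt{p_1},\dots,\sqrt{p_s})$ for every divisor $d$ of $m$ fails when the squarefree part of $d$ is even. Concretely, if $8 \mid m$, then $S(1,8) = \sqrt{8}\,(1+\sqrt{-1}) = 4\zeta_8$ with $\zeta_8 = (1+\sqrt{-1})/\sqrt{2}$ a primitive eighth root of unity, and $\zeta_8 \notin \mathbb{Q}(\sqrt{-1},\sqrt{p_1},\dots,\sqrt{p_s})$ when all $p_i$ are odd: the quadratic subfields of that multiquadratic field are the $\mathbb{Q}(\sqrt{\delta})$ with $\delta$ a product of $-1$ and distinct $p_i$'s, and $\sqrt{2}$ generates none of them. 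So the claimed inclusion $R(m) \subset L$ is false for your choice of $L$.

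The repair is a one-word change and is what the paper does: take $p_1,\dots,p_m$ to be \emph{all} prime divisors of $c(\omega)$, including $2$ when $2 \mid c(\omega)$. Then $\sqrt{2} \in L$, hence $\zeta_8 \in L$, the factor $(1\pm\sqrt{-1})\sqrt{d}$ lies in $L$ in all cases, and the rest of your argument --- reduction to the coprime case via Lemma~\ref{lem:quadratic-sum}(a),(b), the evaluation via part (c), and the passage from $K \subset L$ to the conclusion through the fundamental theorem of Galois theory --- goes through verbatim and coincides with the paper's proof.
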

\begin{proof}
  Let $p_1, \cdots, p_m$ be all prime divisors of $c(\omega)$. By Lemma~\ref{lem:quadratic-sum} and Corollary~\ref{cor:GT-ind-arith-2}, $K$ is a subfield $L = \mathbb{Q}[\sqrt{-1}, \sqrt{p_1}, \cdots, \sqrt{p_m}]$, which is the minimal splitting field of
  \begin{equation*}
    f(x) = (x^2 + 1) (x^2 - p_1) (x^2 - p_2) \cdots (x^2 - p_m) \in \mathbb{Q}[x].
  \end{equation*}
  $L/\mathbb{Q}$ is a Galois extension and the corresponding Galois group is isomorphic to $\mathbb{Z}_2^{m+1}$. Now the proof is obvious by the fundamental theorem of Galois theory.
\end{proof}

\begin{remark}
  Let $K$ be as above. Suppose that $\zeta \in K\setminus\{\pm 1\}$ is a root of unity. By the above proposition, ${\rm Gal}(\mathbb{Q}(\zeta)/\mathbb{Q})$ is isomorphic to a direct product of finitely many $\mathbb{Z}_2$'s. On the other hand, if the order of $\zeta$ is $m$, ${\rm Gal}(\mathbb{Q}(\zeta)/\mathbb{Q}) \cong \mathbb{Z}_m^{\times}$. It follows from these observations that $\zeta^{24} = 1$.

  This remark is motivated by the Frobenius-Schur indicators of the regular representations of semisimple Hopf algebras of dimension 27. We will use primitive third roots of unity to express them, see Theorem~\ref{thm:FS-ind-N3}.
\end{remark}

\subsection{Some estimations of $c(\omega)$}

In view of results of the last subsection, it is important to know $c(\omega)$ to study indicators of group-theoretical categories. Here we give the following estimation of $c(\omega)$.

\begin{lemma}
  \label{lem:GT-ind-estim}
  Let $\Gamma$ be a finite group and $\omega \in Z^3(\Gamma, \mathbb{C}^\times)$ a normalized 3-cocycle. Suppose that there exists a normal subgroup $H$ of $\Gamma$ such that the restriction of $\omega$ to $H$ is trivial. Then $c(\omega)$ divides the exponent of the quotient group $\Gamma/H$.
\end{lemma}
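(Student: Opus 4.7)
The plan is to verify, for each cyclic subgroup $C \le \Gamma$, that the cohomological order of $\omega|_C$ divides $e := \exp(\Gamma/H)$; since $c(\omega)$ is by definition the least common multiple of these orders over all cyclic subgroups of $\Gamma$, the lemma will follow.

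First I would fix a cyclic subgroup $C = \langle g \rangle$ of order $N$ and consider $C' = C \cap H$, which is cyclic of the form $\langle g^m \rangle$ for a unique divisor $m$ of $N$. The inclusion $C \hookrightarrow \Gamma$ induces an injection of the cyclic group $C/C'$ into $\Gamma/H$; since $|C/C'| = m$, this yields $m \mid e$.

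Next I would identify $H^3(C, \mathbb{C}^\times)$ with $\mathbb{Z}/N\mathbb{Z}$ and $H^3(C', \mathbb{C}^\times)$ with $\mathbb{Z}/(N/m)\mathbb{Z}$ via the generators $\psi_N$ and $\psi_{N/m}$ from~(\ref{eq:cohomology}), and show that the restriction map $H^3(C, \mathbb{C}^\times) \to H^3(C', \mathbb{C}^\times)$ is simply reduction modulo $N/m$. Concretely, a short case analysis on whether $k + l < N/m$ or $k + l \ge N/m$ in evaluating $\overline{mk} + \overline{ml} - \overline{mk+ml}$ modulo $N$ shows that $\psi_N|_{C'} = \psi_{N/m}$ on the nose as cocycles. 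This identification of the restriction map is the main technical point of the argument, though it is a direct calculation with the explicit formula~(\ref{eq:cohomology}).

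Finally, writing $\omega|_C$ as cohomologous to $(\psi_N)^r$ for some integer $r$, the hypothesis that $\omega|_H$, and therefore $\omega|_{C'}$, is cohomologically trivial forces $r \equiv 0 \pmod{N/m}$, so $r = (N/m)\,s$ for some integer $s$. Consequently the cohomological order of $\omega|_C$ equals $N / \gcd(N, (N/m)s) = m / \gcd(m, s)$, which divides $m$ and hence divides $e$, as required.
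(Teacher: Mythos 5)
Your argument is correct, and it reaches the same conclusion by the same overall strategy as the paper --- fix a cyclic subgroup $C\le\Gamma$, use that $\omega$ is cohomologically trivial on the part of $C$ lying in $H$, and bound the order of $[\omega|_C]$ via the restriction map in degree-three cohomology of cyclic groups --- but the key technical input is different. The paper restricts from $\langle x\rangle$ to $\langle x^e\rangle\le H$ (where $e=\exp(\Gamma/H)$), quotes the surjectivity of restriction maps $H^3(\langle x\rangle,\mathbb{C}^\times)\to H^3(\langle x^e\rangle,\mathbb{C}^\times)$ from the literature, and then counts: the kernel is the subgroup of order $\gcd(\ord(x),e)$ of a cyclic group, so any class in it has order dividing $e$. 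You instead restrict to $C'=C\cap H$ and compute the restriction map explicitly from~(\ref{eq:cohomology}); your identity $\overline{mk'}+\overline{ml'}-\overline{mk'+ml'}=m\bigl(\overline{k'}+\overline{l'}-\overline{k'+l'}\bigr)$ does give $\psi_N|_{C'}=\psi_{N/m}$ on the nose, so the restriction is reduction modulo $N/m$ --- which in particular reproves the surjectivity the paper cites. Your route is more self-contained (it uses only the description of $H^3(\mathbb{Z}_N,\mathbb{C}^\times)$ already invoked in Lemma~\ref{lem:omega-n}) at the cost of an explicit cocycle manipulation, and it yields the marginally sharper bound $\ord([\omega|_C])\mid [C:C\cap H]$, whereas the paper gets $\ord([\omega|_C])\mid\gcd(|C|,e)$; since $[C:C\cap H]$ divides $\gcd(|C|,e)$ divides $e$, both suffice for the lemma.
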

\begin{proof}
  Let $e$ be the exponent of $\Gamma/H$. By the definition, $e$ is the smallest positive integer such that $x^e \in H$ for every $x \in \Gamma$. Let $\omega_x$ denote the restriction of $\omega$ to the cyclic subgroup of $\Gamma$ generated by $x$. By the definition of $c(\omega)$, it is sufficient to show that $\omega_x^e$ is trivial for every $x \in \Gamma$.

  Let $x \in \Gamma$. Consider the restriction map $\rho_x: H^3(\langle x \rangle, \mathbb{C}^\times) \to H^3(\langle x^e \rangle, \mathbb{C}^\times)$. This is known to be surjective (see \cite[\S 6.7]{MR1269324}; Note that $H^*(G, \mathbb{C}^\times) \cong H^{*+1}(G, \mathbb{Z})$ for all group $G$). By the assumption that the restriction of $\omega$ to $H$ is trivial, $\omega_x \in \Ker(\rho_x)$. Hence we have
  \begin{equation*}
    |\Ker(\rho_x)|
    = \frac{|H^3(\langle x \rangle, \mathbb{C}^\times)|}{|{\rm Im}(\rho_x)|}
    = \frac{|H^3(\langle x \rangle, \mathbb{C}^\times)|}{|H^3(\langle x^e \rangle, \mathbb{C}^\times)|}
    = \frac{|\langle x \rangle|}{|\langle x^e \rangle|} = \gcd(m, e)
  \end{equation*}
  where $m$ is the order of $x$. This implies that $\omega_x^e$ is trivial.
\end{proof}

By using this estimation of $c(\omega)$, we give some criteria for indicators of group-theoretical categories being integers.

\begin{corollary}
  \label{cor:GT-int-criterion-1}
  Let $\mathcal{C} = \mathcal{C}(\Gamma, \omega, F, \alpha)$ be a group-theoretical category. If there exists a subgroup $H$ of $\Gamma$ of index two such that the restriction of $\omega$ to $H$ is trivial, then $\nu_n(\mathcal{C}) \in \mathbb{Z}$ for every $n$.
\end{corollary}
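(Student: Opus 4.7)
The plan is to reduce directly to the two preceding results: Lemma~\ref{lem:GT-ind-estim} (which bounds $c(\omega)$ by the exponent of a quotient $\Gamma/H$ whenever $\omega|_H$ is trivial) and the corollary immediately before this one (which guarantees integrality of $\nu_n(\mathcal{C})$ when $c(\omega) \le 2$). So essentially everything is already done; I just need to verify the hypotheses of Lemma~\ref{lem:GT-ind-estim} and compute the resulting exponent.

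First I would observe that any subgroup of index two is automatically normal in $\Gamma$, so the hypothesis ``$H$ normal'' of Lemma~\ref{lem:GT-ind-estim} is satisfied for free. The quotient $\Gamma/H$ then has order two, hence is isomorphic to $\mathbb{Z}_2$, and in particular its exponent equals $2$. Applying Lemma~\ref{lem:GT-ind-estim} with this $H$, the conclusion is that $c(\omega)$ divides $2$, so $c(\omega) \in \{1, 2\}$.

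Finally I would invoke the preceding corollary (the one asserting that $c(\omega) \le 2$ implies $\nu_n(\mathcal{C}) \in \mathbb{Z}$ for every $n \ge 1$) to conclude the desired integrality. The argument is therefore a two-line chain of implications; there is no genuine obstacle, and no delicate estimate is needed beyond what is already contained in Lemma~\ref{lem:GT-ind-estim}. The only point worth flagging for the reader is the automatic normality of an index-two subgroup, so that Lemma~\ref{lem:GT-ind-estim} applies without further hypothesis.
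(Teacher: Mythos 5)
Your proposal is correct and follows the paper's own proof essentially verbatim: note that an index-two subgroup is normal, apply Lemma~\ref{lem:GT-ind-estim} to conclude $c(\omega)$ divides $2$, and then use the integrality statement for $c(\omega) \le 2$ (equivalently, Corollary~\ref{cor:GT-ind-arith-2} together with $R(1) = R(2) = \mathbb{Z}$). No gaps.
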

\begin{proof}
  As $H$ has index two, $H$ is normal in $\Gamma$. Note that $R(2) = \mathbb{Z}$. The result follows from Corollary~\ref{cor:GT-ind-arith-2} and Lemma~\ref{lem:GT-ind-estim}.
\end{proof}

\begin{corollary}
  \label{cor:GT-int-criterion-2}
  Suppose that $A$ is a semisimple Hopf algebra fitting into an abelian extension $1 \to \mathbb{C}^G \to A \to \mathbb{C}\mathbb{Z}_2 \to 1$. Then $\nu_n(A) \in \mathbb{Z}$ for every $n$.
\end{corollary}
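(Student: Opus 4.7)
The plan is to reduce this to Corollary~\ref{cor:GT-int-criterion-1} by realising $\Rep(A)$ as a group-theoretical category whose associator is trivial on an index-two subgroup.

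First, I would invoke Natale's theorem (recalled in the discussion preceding Corollary~\ref{cor:FS-extension-1}): since $A$ fits into an abelian extension $1 \to \mathbb{C}^G \to A \to \mathbb{C}\mathbb{Z}_2 \to 1$, we may write $A = \mathbb{C}^G \#_{\sigma,\tau} \mathbb{C}F$ for $F = \mathbb{Z}_2$ and suitable cocycles $\sigma, \tau$, and $\Rep(A)$ is monoidally equivalent to $\mathcal{C}(\Gamma, \omega, F, 1)$ where $\Gamma = F \bicross G$ and $\omega \in Z^3(\Gamma, \mathbb{C}^\times)$ is the 3-cocycle given by formula~(\ref{eq:extension-3-cocycle}). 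By Theorem~\ref{thm:FS-invariance}, it suffices to show that $\nu_n(\mathcal{C}(\Gamma, \omega, F, 1)) \in \mathbb{Z}$.

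The key step is to exhibit an index-two subgroup on which $\omega$ restricts trivially. I would take $H = G$, embedded in $\Gamma = F \bicross G$ as $\{(1,g) : g \in G\}$; since $|F| = 2$, this has index two in $\Gamma$. It remains to check that the restriction of $\omega$ to $H \times H \times H$ is the constant function~1. For $x, y, z \in H$, the $F$-components satisfy $y_{|F} = z_{|F} = 1$, and because the matched-pair action fixes the identity, $y_{|G} \triangleright z_{|F} = y_{|G} \triangleright 1 = 1$ as well. Plugging this into~(\ref{eq:extension-3-cocycle}) and using the standard normalisation of $\sigma$ and $\tau$ (which take value~$1$ whenever any of their $F$-arguments is~$1$), we obtain
\begin{equation*}
  \omega(x, y, z) = \sigma(x_{|G};\, 1,\, 1) \cdot \tau(x_{|G},\, y_{|G};\, 1) = 1.
\end{equation*}
Thus $\omega|_H$ is literally the trivial cocycle, in particular cohomologically trivial.

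With this in place, Corollary~\ref{cor:GT-int-criterion-1} applies directly and yields $\nu_n(\mathcal{C}(\Gamma, \omega, F, 1)) \in \mathbb{Z}$ for every~$n$, hence $\nu_n(A) \in \mathbb{Z}$. There is no real obstacle here; the only subtlety worth double-checking is the vanishing of~(\ref{eq:extension-3-cocycle}) on $G$, which depends on normalisation conventions for $\sigma$ and $\tau$ but holds in all standard references (e.g.\ \cite{MR1913439}). One may note for completeness that $H = G$ of index two is automatically normal in $\Gamma$, consistent with the hypothesis invoked inside the proof of Corollary~\ref{cor:GT-int-criterion-1}.
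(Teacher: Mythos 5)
Your proof is correct and follows the same overall reduction as the paper: identify $G$ as an index-two subgroup of $\Gamma = F \bicross G$ on which $\omega$ restricts trivially, then invoke Corollary~\ref{cor:GT-int-criterion-1}. The one step you justify differently is the triviality of $\omega|_G$. The paper reads it off from exactness of the Kac sequence~(\ref{eq:Kac}) at $H^3(F \bicross G, \mathbb{C}^\times)$: since $\omega = \overline{\omega}(A)$ lies in the image of the map $\overline{\omega}$, it dies under restriction to $H^3(F,\mathbb{C}^\times)\oplus H^3(G,\mathbb{C}^\times)$, so $\omega|_G$ is cohomologically trivial. You instead compute directly from~(\ref{eq:extension-3-cocycle}): for $x,y,z$ with trivial $F$-components one gets $\omega(x,y,z)=\sigma(x_{|G};1,1)\,\tau(x_{|G},y_{|G};1)=1$, using the matched-pair identities $g\triangleright 1=1$, $g\triangleleft 1=g$ and the normalisation of $\sigma$ and $\tau$ (which is forced by $u_1=1$ and $\Delta(1)=1\otimes 1$). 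Your computation is more elementary and in fact yields literal, not merely cohomological, triviality of the chosen representative, at the cost of leaning on normalisation conventions that the paper states only implicitly; the Kac-sequence argument is convention-free and shows in one stroke that $\omega$ also restricts trivially to $F$. Either version supplies the hypothesis of Corollary~\ref{cor:GT-int-criterion-1} (only cohomological triviality is used there, via Lemma~\ref{lem:GT-ind-estim}), so your argument is complete.
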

\begin{proof}
  Let $\Gamma = G \bicross \mathbb{Z}_2$ be the bicrossed product associated with this extension. By the Kac exact sequence~(\ref{eq:Kac}), the restriction of $\omega = \overline{\omega}(A) \in Z^3(\Gamma, \mathbb{C}^\times)$ to $G$ is trivial. Recall that $\Rep(A)$ is equivalent to $\mathcal{C}(\Gamma, \omega, G, 1)$ as a monoidal category. By Corollary~\ref{cor:GT-int-criterion-1}, $\nu_n(A) \in \mathbb{Z}$ for every $n$.
\end{proof}

\section{Examples}
\label{sec:GT-example}

In this section, we compute Frobenius-Schur indicators of regular representations of some group-theoretical Hopf algebras.

Throughout this section, we use the following notations: For an element $g$ of a group, $\ord(g)$ means the order of $g$. $\mu_N$ is the group of $N$-th roots of unity in $\mathbb{C}$. For a positive integer $n$ and a prime number $p$, let $b_p(n)$ denote the maximal non-negative integer $i$ such that $p^i$ divides $n$. The Iverson bracket $[P]$ for a condition $P$ is used to indicate 1 if $P$ holds and 0 otherwise. For example,
\begin{equation*}
  [b_2(n) = 0] = [\text{$n$ odd}] = \begin{cases}
    1 & \text{if $n$ is odd}, \\
    0 & \text{if $n$ is even}.
  \end{cases}
\end{equation*}

\subsection{Some Hopf algebras of dimension $2N^2$}

Fix an integer $N > 1$. The right action of $F = \mathbb{Z}_2$ on $G = \mathbb{Z}_N \times \mathbb{Z}_N$ given by
\begin{equation*}
  (i, j) \triangleleft 0 = (i, j), \quad
  (i, j) \triangleleft 1 = (j, i)  \quad (i, j \in \mathbb{Z}_N)
\end{equation*}
and the trivial action of $G$ on $F$ make $(F, G)$ into a matched pair. Masuoka \cite[\S 2]{MR1448809} showed that $\Opext(\mathbb{C}F, \mathbb{C}^G) \cong \mu_N$. The Hopf algebra, denoted by $H_{2N^2}(\xi)$, obtained by the abelian extension corresponding to $\xi \in \mu_N$ is isomorphic to $\mathbb{C}^G \#_{\sigma, \tau} \mathbb{C}F$ with $\sigma$ and $\tau$ given by
\begin{equation*}
  \sigma((i, j); a, b) =
  \begin{cases}
    \xi^{i j} & \text{if $a = b = 1$}, \\
    1        & \text{otherwise},
  \end{cases}
  \quad \tau((i, j), (k, l); a) =
  \begin{cases}
    \xi^{j k} & \text{if $a = 1$}, \\
    1        & \text{otherwise}
  \end{cases}
\end{equation*}
for $i, j, k, l \in \mathbb{Z}_N$ and $a, b \in \mathbb{Z}_2$, see the proof of \cite[Theorem~2.1]{MR1448809}.

Applying Corollary~\ref{cor:FS-extension-1}, we compute Frobenius-Schur indicators of the regular representation of $H_{2N^2}(\xi)$. Let $\Gamma = F \bicross G$. By~(\ref{eq:extension-3-cocycle}), $\omega \in Z^3(\Gamma, \mathbb{C}^\times)$ associated to $H_{2N^2}(\xi)$ is given by
\begin{equation*}
  \omega((a_1, i_1, j_1), (a_2, i_2, j_2), (a_3, i_3, j_3)) =
  \begin{cases}
    1                       & \text{if $a_3 =   0$}, \\
    \xi^{j_1 i_2}           & \text{if $a_3 \ne 0$ and $a_2 =   0$}, \\
    \xi^{i_1 j_1 + i_1 i_2} & \text{if $a_3 \ne 0$ and $a_2 \ne 0$}
  \end{cases}
\end{equation*}
for $(a_k, i_k, j_k) \in \Gamma$ ($k = 1, 2, 3$).

\begin{theorem}
  \label{thm:GT-example}
  The $n$-th Frobenius-Schur indicator of the regular representation of $H = H_{2N^2}(\xi)$ is given as follows: If either $n$ is odd or the condition
  \begin{equation}
    \label{eq:GT-ex-1-cond}
    b_2(N) = b_2(\ord(\xi)) = b_2(n) - 1 \ge 1
  \end{equation}
  holds, then $\nu_n(H) = \gcd(N, n)^2$. Otherwise, $\nu_n(H) = \gcd(N, n)^2 + N \gcd(N, n/2)$.
\end{theorem}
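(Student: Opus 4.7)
The plan is to apply Corollary~\ref{cor:FS-extension-1} directly. Writing $\Gamma = F \bicross G$, I would split the sum $\sum_{g \in \Gamma} \delta_{g^n,1} \prod_{k=1}^{n-1} \omega(g, g^k, g)$ according to the $F$-coordinate $a \in \mathbb{Z}_2$. For $a = 0$ the condition $a_3 = 0$ in the explicit formula for $\omega$ given just before the theorem forces $\omega = 1$ on every triple from this subgroup, and $(0, i, j)^n = 1$ amounts to $ni \equiv nj \equiv 0 \pmod{N}$, contributing $\gcd(N, n)^2$. This is the ``main'' term in both cases of the theorem.

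For $a = 1$, a short induction using $(1, i, j)^2 = (0, i+j, i+j)$ gives, with $s := i + j$,
\begin{equation*}
  (1, i, j)^{2k} = (0, k s, k s), \qquad (1, i, j)^{2k+1} = (1, i + k s, j + k s).
\end{equation*}
Hence $(1, i, j)^n$ has $F$-coordinate $1$ whenever $n$ is odd, so the $a = 1$ block contributes nothing and $\nu_n(H) = \gcd(N, n)^2$, settling the odd case. For $n = 2m$ even, the condition $g^n = 1$ reads $m s \equiv 0 \pmod N$, so $s$ is one of the $d' := \gcd(N, n/2)$ multiples of $N/d'$ in $\mathbb{Z}_N$. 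Substituting the power formulas into the explicit expression for $\omega$ and splitting $\prod_{k=1}^{n-1}$ by the parity of $k$, the two arithmetic progressions collapse via $\sum_{l=0}^{m-1} l = m(m-1)/2$ to give
\begin{equation*}
  \prod_{k=1}^{n-1} \omega(g, g^k, g) = \xi^{E(i, j)}, \qquad E(i, j) = \tfrac{m(m-1)}{2}\, s^2 + m\, s\, i.
\end{equation*}

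The next step is to carry out the double sum. For each admissible $s$ the inner sum $\sum_{i \in \mathbb{Z}_N} \xi^{m s \cdot i}$ equals $N$, since $m s \equiv 0 \pmod{N}$ and $\ord(\xi) \mid N$. Writing $s = l N/d'$, the outer sum becomes
\begin{equation*}
  N \sum_{l = 0}^{d' - 1} \eta^{l^2}, \qquad \eta := \xi^{(N/d')^2 m(m-1)/2}.
\end{equation*}
A prime-by-prime check using $\ord(\xi) \mid N$ shows $\eta^2 = 1$, so $\eta \in \{\pm 1\}$, and the problem reduces to pinning down when $\eta = -1$.

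The decisive step is the $2$-adic analysis of $\eta$. Setting $\alpha = b_2(N)$, $\beta = b_2(\ord \xi)$ and $\gamma = b_2(n)$ (so $\beta \le \alpha$ and $b_2(m) = \gamma - 1$), one verifies that at every odd prime the inequality $b_p(\ord \xi) \le b_p(E_0)$ is automatic, where $E_0 = (N/d')^2 m(m-1)/2$, and then a case split at $p = 2$ on whether $\alpha \le b_2(m)$ or $\alpha > b_2(m)$ collapses the condition $\eta = -1$ to the single equality $\alpha = \beta = \gamma - 1 \ge 1$, which is precisely condition~(\ref{eq:GT-ex-1-cond}). When that condition holds, $b_2(d') = \min(\alpha, \gamma - 1) = \gamma - 1 \ge 1$, so $d'$ is even and $\sum_{l=0}^{d'-1} (-1)^{l^2} = \sum_{l=0}^{d'-1} (-1)^l = 0$, killing the $a = 1$ block; otherwise $\eta = 1$ and the sum contributes $N d' = N \gcd(N, n/2)$. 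Adding the $a = 0$ and $a = 1$ parts yields the two cases of the theorem. The step I expect to cost most effort is the $2$-adic case analysis just sketched, since it requires tracking how $b_2(N/d')$, $b_2(m(m-1))$ and $\beta$ interact across the subcases determined by the parity of $m$ and by $\alpha$ versus $b_2(m)$.
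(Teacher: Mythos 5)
Your proposal is correct and follows essentially the same route as the paper's proof: split by the $\mathbb{Z}_2$-coordinate, compute the powers of $(1,i,j)$, reduce the $a=1$ block to $N\sum_{l}\eta^{l^2}$ with $\eta=\xi^{(N/d')^2\binom{n/2}{2}}=\pm1$, and identify $\eta=-1$ with condition~(\ref{eq:GT-ex-1-cond}) by a $2$-adic case analysis. The only (cosmetic) difference is that you retain the term $msi$ in the exponent and kill it by summing over $i$, whereas the paper discards it already modulo $N$ using $ms\equiv 0$.
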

\begin{proof}
  Let $\widetilde{\omega}_n: \Gamma \to \mathbb{C}$ be the function given by (\ref{eq:omega-n}). We first suppose that $n$ is odd. Then, $g \in \Gamma$ satisfies $g^n = 1$ if and only if $g = (0, i, j)$ with $i, j \in \mathbb{Z}_N$ satisfying $n i = n j = 0$. For such an element $g \in \Gamma$, we have $\widetilde{\omega}_n(g) = 1$. Hence, we obtain
  \begin{equation*}
    \nu_n(H) = \left( \# \{ i \in \mathbb{Z}_N \mid n i = 0 \} \right)^2 = \gcd(N, n)^2.
  \end{equation*}

  Next, we suppose that $n = 2 k$ is even. Then $g \in \Gamma$ satisfies $g^n = 1$ if and only if one of the following exclusive conditions holds:
  \begin{itemize}
  \item $g = (0, i, j)$ with $i, j \in \mathbb{Z}_N$ satisfying $n i = n j = 0$.
  \item $g = (1, i, j)$ with $i, j \in \mathbb{Z}_N$ satisfying $k(i + j) = 0$.
  \end{itemize}
  Let $g = (1, i, j) \in \Gamma$ be an element satisfying $g^n = 1$. Since $k (i + j) = 0$ in $\mathbb{Z}_N$, we may assume that $j = -i + (N/d) \cdot m$ ($0 \le m < d$) where $d = \gcd(N, k)$. We compute
  \begin{equation*}
    \widetilde{\omega}_n(g) = \prod_{l = 0}^{k - 1} \omega(g, g^{2l}, g) \omega(g, g^{2l+1}, g) = \xi^{E(k; i, j)}
  \end{equation*}
  where $E(k; i, j) \in \mathbb{Z}_N$, the exponent of $\xi$, is given by
  \begin{equation*}
    \sum_{l = 0}^{k - 1} \{ l j (i + j) + i j + i (l i + l j + i) \} = \binom{k}{2} \cdot (i + j)^2 = \binom{k}{2} \cdot \frac{N^2}{d^2} \cdot m^2.
  \end{equation*}
  Set $E(k) = \binom{k}{2} \cdot (N/d)^2$ so that $E(k; i, j) = E(k) m^2$. Since $2E(k) \equiv 0 \pmod{N}$, $\xi^{E(k)} = \pm 1$. We also see that $\xi^{E(k)} = -1$ only if both $N$ and $k$ are even. By the above arguments, we compute
  \begin{align*}
    \nu_n(H) & = \gcd(N, n)^2 + \sum_{i = 0}^{N - 1} \sum_{m = 0}^{d - 1} \xi^{E(k) m^2} \\
    & = \gcd(N, n)^2 + [\xi^{E(k)} = +1] \cdot N \gcd(N, k).
  \end{align*}
  Now it suffices to show that the condition~(\ref{eq:GT-ex-1-cond}) is equivalent to that $\xi^{E(k)} = -1$. This can be done by easy number-theoretical arguments.
\end{proof}

Suppose that there exists $\zeta \in \mu_N$ satisfying $\xi = \zeta^2$. (Such a $\zeta$ always exists when $N$ is odd.) Then, it follows from the above theorem that $\nu_n(H_{2N^2}(\xi))$ is equal to the number of elements $g \in \Gamma$ satisfying $g^n = 1$.

On the other hand, if there does not exist such a $\zeta \in \mu_N$, then $N$ is necessarily even, and we have $\nu_{2N}(H_{2N^2}(\xi)) = N^2 \ne \nu_{2N}(H_{2N^2}(1)) = 2N^2$. This implies that $H_{2N^2}(\xi)$ and $H_{2N^2}(1)$ are not monoidally Morita equivalent. Letting $N = 2$, we obtain the following result of Tambara and Yamagami \cite{MR1659954}. This result was also obtained by Ng and Schauenburg by using Frobenius-Schur indicators in a way slightly different from ours \cite[\S 6]{MR2366965}.

\begin{corollary}
  Let $\mathcal{B}_8 = H_8(-1)$, $D_8$ the dihedral group of order 8, $Q_8$ the quaternion group of order 8. Then $\mathcal{B}_8$, $\mathbb{C}D_8$ and $\mathbb{C}Q_8$ are not mutually monoidally Morita equivalent.
\end{corollary}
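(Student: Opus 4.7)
The plan is to use the fact that $\nu_n$ is a monoidal Morita invariant (Theorem~\ref{thm:FS-invariance}, together with the remark at the end of Section~\ref{sec:regular}), so it suffices to exhibit, for each pair among $\mathcal{B}_8$, $\mathbb{C}D_8$, $\mathbb{C}Q_8$, some $n$ for which their $n$-th Frobenius-Schur indicators differ. For the two group algebras I would use the classical formula
\begin{equation*}
  \nu_n(\mathbb{C}G) = \#\{g \in G \mid g^n = 1\},
\end{equation*}
which is a special case of Corollary~\ref{cor:FS-extension-2} (or, alternatively, the observation in Section~\ref{sec:introduction}). For $\mathcal{B}_8 = H_8(-1)$ I would appeal directly to Theorem~\ref{thm:GT-example}, which just has been proved and thus requires no further work.

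First I would distinguish $\mathbb{C}D_8$ from $\mathbb{C}Q_8$ via $\nu_2$: in $D_8$ the identity, the unique rotation of order $2$, and the four reflections give $\nu_2(\mathbb{C}D_8)=6$, whereas in $Q_8$ only $\pm 1$ square to $1$, so $\nu_2(\mathbb{C}Q_8)=2$. Hence these two Hopf algebras cannot be monoidally Morita equivalent.

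Next I would separate $\mathcal{B}_8$ from both group algebras by using $n=4$. Since every element of $D_8$ and of $Q_8$ has order dividing $4$, the elementary count gives $\nu_4(\mathbb{C}D_8)=\nu_4(\mathbb{C}Q_8)=8$. On the other hand, applying Theorem~\ref{thm:GT-example} with $N=2$, $\xi=-1$, $n=4$, the condition~(\ref{eq:GT-ex-1-cond}) becomes $b_2(2)=b_2(2)=b_2(4)-1=1\ge 1$, which holds, and so $\nu_4(\mathcal{B}_8)=\gcd(2,4)^2=4\ne 8$. This distinguishes $\mathcal{B}_8$ from each of $\mathbb{C}D_8$ and $\mathbb{C}Q_8$, completing the proof.

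There is no real obstacle: everything reduces to reading off values of a computed formula. The only thing that deserves a sentence of justification is the verification of condition~(\ref{eq:GT-ex-1-cond}) at $(N,\ord\xi,n)=(2,2,4)$, and that is immediate. (One could equally well quote the inequality $\nu_{2N}(H_{2N^2}(-1))=N^2\ne 2N^2=\nu_{2N}(H_{2N^2}(1))$ noted in the paragraph preceding the corollary, specialized to $N=2$, to conclude $\nu_4(\mathcal{B}_8)=4$ without re-invoking Theorem~\ref{thm:GT-example}.)
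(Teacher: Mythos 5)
Your proposal is correct and follows essentially the same route as the paper: both use the monoidal Morita invariance of $\nu_n$ together with $\nu_2$ (which gives $6$, $6$, $2$ for $\mathcal{B}_8$, $\mathbb{C}D_8$, $\mathbb{C}Q_8$) and $\nu_4$ (which gives $4$ for $\mathcal{B}_8$ versus $8$ for the two group algebras, via the observation $\nu_{2N}(H_{2N^2}(-1))=N^2$ preceding the corollary). The only cosmetic difference is which indicator you invoke to separate $\mathcal{B}_8$ from $\mathbb{C}Q_8$ ($\nu_4$ for you, $\nu_2$ in the paper), and your verification of condition~(\ref{eq:GT-ex-1-cond}) at $(N,\ord\xi,n)=(2,2,4)$ is accurate.
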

\begin{proof}
  Note that $H_8(1) \cong \mathbb{C}D_8$. $\mathcal{B}_8$ and $\mathbb{C}D_8$ are not monoidally Morita equivalent by the above observation. We also have $\nu_2(\mathbb{C}Q_8) = 2$ while $\nu_2(\mathcal{B}_8) = \nu_2(\mathbb{C}D_8) = 6$. This implies that both $\mathcal{B}_8$ and $\mathbb{C}D_8$ are not monoidally Morita equivalent to $\mathbb{C}Q_8$.
\end{proof}

\subsection{Some Hopf algebras of dimension $N^3$}

Fix an odd integer $N$. The right action of $F = \mathbb{Z}_N$ on $G = \mathbb{Z}_N \times \mathbb{Z}_N$ given by
\begin{equation*}
  (i, j) \triangleleft a = (i + a j, j) \quad (a, i, j \in \mathbb{Z}_N)
\end{equation*}
and the trivial action of $G$ on $F$ makes $(F, G)$ into a matched pair. Masuoka \cite[\S 3]{MR1448809} showed that $\Opext(\mathbb{C}^G, \mathbb{C}F) \cong \mu_N \times \mu_N$. We denote by $H_{N^3}(\xi, \zeta)$ the Hopf algebra obtained by the abelian extension corresponding to $(\xi, \zeta) \in \mu_N \times \mu_N$. Following the proof of \cite[Theorem~3.1]{MR1448809}, $H = H_{N^3}(\xi, \zeta)$ is constructed as follows: As a $\mathbb{C}^G$-ring, $H$ is generated by $x$ with relations
\begin{equation*}
  x^N = \sum_{i, j \in \mathbb{Z}_N} \xi^j e_{i j}
  \text{\quad and \quad} x e_{i j} = e_{i - j, j} x
\end{equation*}
where $e_{i j} \in \mathbb{C}^G$ is the dual basis of $(i, j) \in G$. $H$ is generated by $x$ and $e_{i j}$ ($i, j \in \mathbb{Z}_N$) as an algebra. The comultiplication of $H$ is the algebra map $\Delta$ determined by
\begin{equation*}
  \Delta(x) = \sum_{p, q, r, s \in \mathbb{Z}_N} \zeta^{q r} e_{p, q} x \otimes e_{r, s} x
  \quad \text{and} \quad \Delta(e_{i j}) = \sum_{p, q \in \mathbb{Z}_N} e_{i - p, j - q} \otimes e_{p q},
\end{equation*}
and the counit is the algebra map $\varepsilon$ determined by $\varepsilon(x) = 1$ and $\varepsilon(e_{i j}) = \delta_{i 0} \delta_{j 0}$.

Let us find cocycles $\sigma$ and $\tau$ corresponding to $H_{N^3}(\xi, \zeta)$. Fix an $N$-th root $\lambda$ of $\xi^{-1}$. For $j \in \mathbb{Z}_N$, set $\lambda_j = \lambda^m$ where $0 \le m < N$ is a unique integer such that $j \equiv m \pmod{N}$. Then, since the element
\begin{equation*}
  u_1 = \sum_{i, j \in \mathbb{Z}_N} \lambda_j e_{i j} x \in H_{N^3}(\xi, \zeta)
\end{equation*}
satisfies $u_1^N = 1$,  $u_a := u_1^a$ ($a \in \mathbb{Z}_N$) is well-defined. We have
\begin{equation*}
  \Delta(u_a) = \sum_{i, j, k, l \in \mathbb{Z}_N} \tau((i, j), (k, l); a) e_{i j} u_a \otimes e_{k l} u_a
\end{equation*}
where $\tau: G \times G \times F \to \mathbb{C}^\times$ is given by
\begin{equation*}
  \tau((i, j), (k, l); a)
  = (\lambda_{j + l} \lambda_j^{-1} \lambda_l^{-1})^a \cdot \zeta^{a \cdot j k + \binom{a}{2} \cdot j l}.
\end{equation*}
Note that the right-hand side of the above equation is well-defined for $a \in \mathbb{Z}_N$. Comparing these equations with (\ref{eq:extension-mul}) and (\ref{eq:extension-com}), we have that $H_{N^3}(\xi, \zeta)$ is isomorphic to $\mathbb{C}^G \#_{1, \tau} \mathbb{C}F$ with the above $\tau$.

Let $\Gamma = F \bicross G$. By~(\ref{eq:extension-3-cocycle}), the corresponding $\omega \in Z^3(\Gamma, \mathbb{C}^\times)$ is given by
\begin{equation*}
  \omega((a_1, i_1, j_1), (a_2, i_2, j_3), (a_3, i_3, j_3))
  = (\lambda_{j_1 + j_2}^{} \lambda_{j_1}^{-1} \lambda_{j_2}^{-1})^{a_3} \zeta^{a_3 \cdot j_1 i_2 + \binom{a_3}{2} j_1 j_2}
\end{equation*}
where $a_k, i_k, j_k \in \mathbb{Z}_N$ ($k = 1, 2, 3$). Let $\widetilde{\omega}_n: \Gamma \to \mathbb{C}$ be the function given by~(\ref{eq:omega-n}). To compute indicators, we provide the following lemma.

\begin{lemma}
  Let $n$ be a positive integer.
  \begin{enumalph}
  \item $g = (a, i, j) \in \Gamma$ satisfies $g^n = 1$ if and only if $n a = n i = n j = 0$ in $\mathbb{Z}_N$.
  \item Let $g = (a, i, j) \in \Gamma$ be an element satisfying $g^n = 1$. Then
    \begin{equation*}
      \widetilde{\omega}_n(g) = \xi^{a j n / N} \cdot \zeta^{\binom{n}{3} a^2 j^2}.
    \end{equation*}
  \end{enumalph}
\end{lemma}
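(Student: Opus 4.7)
The plan is to compute $g^n$ explicitly in $\Gamma$, deduce (a), and then insert the power formula into the 3-cocycle $\omega$ to extract $\widetilde{\omega}_n(g)$ by separating the $\lambda$-contribution (which telescopes) from the $\zeta$-contribution (which simplifies once $na = ni = nj = 0$).

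First I would verify by induction that the multiplication
$(a_1, i_1, j_1)(a_2, i_2, j_2) = (a_1 + a_2,\, i_1 + a_2 j_1 + i_2,\, j_1 + j_2)$
in $\Gamma = F \bicross G$ yields
\[
g^n = \bigl(na,\; ni + \tbinom{n}{2}\, aj,\; nj\bigr)
\]
for $g = (a, i, j)$; the inductive step uses $\binom{n}{2} + n = \binom{n+1}{2}$. For part (a), the conditions $na = nj = 0$ are evidently necessary, and then $ni = 0$ is the only remaining constraint, because $N$ is odd so that $2$ is invertible in $\mathbb{Z}_N$ and
$\binom{n}{2} aj = (n-1)\cdot 2^{-1}\cdot (na)\cdot j = 0$ automatically.

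For part (b) I would evaluate $\omega(g, g^k, g)$ by substituting $j_1 = j_3 = j$, $j_2 = kj$, $i_2 = ki + \binom{k}{2} aj$ and $a_3 = a$ into the explicit cocycle formula, obtaining
\[
\omega(g, g^k, g) = \bigl(\lambda_{(k+1)j}\, \lambda_j^{-1}\, \lambda_{kj}^{-1}\bigr)^a \cdot \zeta^{\,akij + \binom{k}{2} a^2 j^2 + \binom{a}{2} k j^2}.
\]
Taking $\prod_{k=1}^{n-1}$, the $\lambda$-factors telescope to $\lambda_{nj}\,\lambda_j^{-n}$, while the $\zeta$-exponent becomes $\binom{n}{2} aij + \binom{n}{3} a^2 j^2 + \binom{n}{2}\binom{a}{2} j^2$, using $\sum_{k=1}^{n-1} k = \binom{n}{2}$ and $\sum_{k=1}^{n-1}\binom{k}{2} = \binom{n}{3}$. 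Under the hypothesis $na = nj = 0$ in $\mathbb{Z}_N$, the first $\zeta$-term vanishes (absorbing the factor $n$ from $\binom{n}{2} = n(n-1)\cdot 2^{-1}$ into $na$, which is legitimate because $N$ is odd) and the third vanishes similarly through $nj = 0$, leaving just $\binom{n}{3} a^2 j^2$.

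The main subtlety is the $\lambda$-product. On the one hand, $nj \equiv 0 \pmod N$ forces $\lambda_{nj} = 1$ directly from the definition of $\lambda_{(-)}$ via the canonical representative in $\{0,\ldots,N-1\}$. On the other hand, viewing $j$ itself as an integer in that range, the naive power $\lambda_j^n$ equals $\lambda^{nj}$; since $\lambda^N = \xi^{-1}$ and $N \mid nj$, this gives $\lambda_j^{-n} = \xi^{nj/N}$ with integer exponent. Hence $\lambda_{nj}\,\lambda_j^{-n} = \xi^{nj/N}$, and raising to the $a$ yields the factor $\xi^{anj/N}$. Keeping careful track of this mismatch between the $\mathbb{Z}_N$-indexed family $\{\lambda_j\}$ and its naive multiplicative extension is the only non-routine ingredient of the proof.
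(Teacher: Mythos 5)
Your proposal is correct and follows essentially the same route as the paper: an inductive computation of $g^n=(na,\,ni+\binom{n}{2}aj,\,nj)$, elimination of the cross term using that $2$ is invertible in $\mathbb{Z}_N$, and then a term-by-term evaluation of $\prod_{k=1}^{n-1}\omega(g,g^k,g)$ in which the $\lambda$-factors telescope to $\lambda_j^{-n}=\xi^{nj/N}$ and the $\zeta$-exponent reduces to $\binom{n}{3}a^2j^2$. Your explicit handling of the mismatch between $\lambda_{nj}=1$ and $\lambda_j^{n}=\lambda^{n\overline{j}}$ is exactly the point the paper treats by fixing representatives $0\le a,j<N$.
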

\begin{proof}
  (a) We have $g^n = (n a, n i + \binom{n}{2} a j, n j)$ by induction on $n$. This implies that $g^n = 1$ if and only if  $n a = n i + \binom{n}{2} a j = n j = 0$ in $\mathbb{Z}_N$. Since $N$ is odd, $\binom{n}{2} a j = 0$ under the assumption that $n a = 0$. Therefore, this is equivalent to that $n a = n i = n j = 0$.

  (b) Suppose that $0 \le a, j < N$. By the definition of $\omega$, we have
  \begin{align*}
    \widetilde{\omega}_n(g)
    & = \prod_{k = 1}^{n - 1} \tau\left(i, j, k i + \binom{k}{2} a j, k j; a\right) \\
    & = \prod_{k = 1}^{n - 1} (\lambda_{j + k j} \lambda_j^{-1} \lambda_{k j}^{-1})^a
    \cdot \zeta^{a j (k i + \binom{k}{2} a j) + \binom{a}{2} k j^2}
    = \lambda_j^{- n a} \cdot \zeta^{E(n; a, i, j)}
  \end{align*}
  where $E(n; a, i, j) \in \mathbb{Z}_N$, the exponent of $\zeta$, is given by
  \begin{equation*}
    E(n; a, i, j) = a^2 j^2 \sum_{k = 1}^{n - 1} \binom{k}{2}
    + \left\{ a i + \binom{a}{2} j \right\} \cdot j \sum_{k = 1}^{n - 1} k
    = \binom{n}{3} a^2 j^2.
  \end{equation*}
  We also have $\lambda_j^{- n a} = (\lambda_j^N)^{- \frac{1}{N} n a} = \xi^{a j n / N}$. This completes the proof.
\end{proof}

\begin{theorem}
  \label{thm:FS-ind-N3}
  The $n$-th Frobenius-Schur indicator of the regular representation of $H = H_{N^3}(\xi, \zeta)$ is given as follows: Let $\alpha = \xi^{E_1(n)}$ and $\beta = \zeta^{E_2(n)}$ where
  \begin{equation*}
    E_1(n) = \frac{N n}{\gcd(N, n)^2} \text{\quad and \quad}
    E_2(n) = \binom{n}{3} \frac{N^4}{\gcd(N, n)^4}.
  \end{equation*}
  If both the condition
  \begin{equation}
    \label{eq:GT-ex-2-cond} b_3(n) = b_3(N) = b_3(\ord(\zeta)) \ge 1
  \end{equation}
  and the inequality $b_3(\ord(\xi)) \le 1$ hold, then
  \begin{equation*}
    \nu_n(H) = \frac{\gcd(N, n)^3}{9 \ord(\alpha)} \times \begin{cases}
      \ \ 5 + 4 \beta & \text{if $b_3(\ord(\xi)) = 0$}, \\
      3(5 - 2 \beta)  & \text{if $b_3(\ord(\xi)) = 1$}.
    \end{cases}
  \end{equation*}
  Otherwise, $\nu_n(H) = \gcd(N, n)^3 / \ord(\alpha)$.
\end{theorem}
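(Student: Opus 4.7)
My plan is to substitute the formulas of Corollary~\ref{cor:FS-extension-1} and the preceding lemma directly into a closed-form calculation. By part (a) of that lemma, an element $g = (a, i, j) \in \Gamma = F \bicross G$ satisfies $g^n = 1$ if and only if $na \equiv ni \equiv nj \equiv 0 \pmod{N}$; since part (b) shows $\widetilde{\omega}_n(a, i, j)$ is independent of $i$, the $i$-coordinate contributes the bare factor $d := \gcd(N, n) = \#\{i \in \mathbb{Z}_N : ni = 0\}$. Parameterising the remaining coordinates as $a = (N/d) a'$, $j = (N/d) j'$ with $a', j' \in \{0, 1, \dots, d-1\}$ and inserting part (b) collapses the exponents of $\xi$ and $\zeta$ to $E_1(n) \cdot a' j'$ and $E_2(n) \cdot (a'j')^2$ respectively, yielding
\begin{equation*}
\nu_n(H) = d \cdot S, \qquad S := \sum_{a' = 0}^{d - 1} \sum_{j' = 0}^{d - 1} \alpha^{a' j'} \beta^{(a' j')^2}.
\end{equation*}
A direct check gives $\alpha^d = \xi^{nN/d} = (\xi^N)^{n/d} = 1$, so $k := \ord(\alpha)$ divides $d$.

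The easy regime is the one in which $\beta^{(a' j')^2} = 1$ for every $(a', j')$ appearing in $S$. There the double sum factors by orthogonality:
\begin{equation*}
S = \sum_{a' = 0}^{d - 1} \sum_{j' = 0}^{d - 1} (\alpha^{a'})^{j'} = \sum_{a' : k \mid a'} d = d^2 / k,
\end{equation*}
so $\nu_n(H) = d^3 / \ord(\alpha)$, which is the otherwise formula. A $3$-adic valuation argument using $b_3(\binom{n}{3}) = b_3(n) - 1$ (when $3 \mid n$) and $b_3(N/d) = \max(0, b_3(N) - b_3(n))$ shows that this applies precisely when condition~(\ref{eq:GT-ex-2-cond}) fails or $b_3(\ord(\xi)) \ge 2$: in each such case either $\beta = 1$ outright, or $\beta \ne 1$ but $(a' j')^2$ is forced into the subgroup on which $\beta$ acts trivially.

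The remaining case is condition~(\ref{eq:GT-ex-2-cond}) together with $b_3(\ord(\xi)) \le 1$. A careful $3$-adic analysis then shows $9 \mid d$, that $\beta$ is a primitive cube root of unity, and that $\beta^{(a' j')^2}$ depends on $(a', j')$ only modulo $3$. Writing $a' = 3 a'' + r$ and $j' = 3 j'' + s$ with $r, s \in \{0, 1, 2\}$, one has $(a' j')^2 \equiv (rs)^2 \pmod{3}$, while $\alpha^{a' j'}$ splits as $\alpha^{9 a'' j''} \cdot \alpha^{3(a'' s + r j'') + r s}$. The inner sums over $a''$ and $j''$ produce orthogonality factors of $(d/3)/k$ or $0$; what remains is a $9$-term sum over $(r, s) \in \mathbb{Z}_3 \times \mathbb{Z}_3$ of an appropriate twisted character, and a direct enumeration using $1 + \beta + \beta^2 = 0$ gives the factor $5 + 4\beta$ when $b_3(\ord(\xi)) = 0$ (so that $\alpha$ has order coprime to $3$) and $3(5 - 2\beta)$ when $b_3(\ord(\xi)) = 1$ (so that $\alpha^3$ has order $k/3$, absorbing the residue character and contributing the extra factor $3$).

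The hard part will be the $3$-adic bookkeeping required both to justify the dependence of $\beta^{(a' j')^2}$ on $(r, s) \bmod 3$ alone and to track how the order of $\alpha$ restricted to cubes interacts with the character sum over $(r, s)$; this is what forces the case split on $b_3(\ord(\xi))$. Once these reductions are in place, the final sum over $\mathbb{Z}_3 \times \mathbb{Z}_3$ is a routine enumeration.
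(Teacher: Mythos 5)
Your reduction to $\nu_n(H) = d\sum_{a',j'=0}^{d-1}\alpha^{a'j'}\beta^{(a'j')^2}$ with $d=\gcd(N,n)$, and the observation that $\beta^3=1$ followed by a splitting of the indices modulo $3$, is essentially the paper's own route (the paper splits only one index mod $3$ and then disposes of the other via $m=\ord(\alpha^3)$, which sidesteps the coupling term $\alpha^{9a''j''}$ that your two-variable split creates, but both lead to the same small enumeration). The genuine gap is where you draw the boundary between your two regimes. You assign the subcase ``condition~(\ref{eq:GT-ex-2-cond}) holds and $b_3(\ord(\xi))\ge 2$'' to the easy regime on the grounds that $\beta^{(a'j')^2}=1$ for every pair $(a',j')$ occurring in $S$. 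That is false: when (\ref{eq:GT-ex-2-cond}) holds one has $\beta\ne 1$ and $3\mid d$, so already $a'=j'=1$ gives $\beta^{(a'j')^2}=\beta\ne 1$, and the orthogonality computation $S=d^2/k$ does not apply. In the paper this subcase is carried through the full mod-$3$ computation, which produces
\begin{equation*}
  \nu_n(H)=\frac{d^3}{9m}\bigl(5+2(\alpha^m+\alpha^{2m})\beta^{m^2}\bigr),
  \qquad m=\ord(\alpha^3),
\end{equation*}
and the collapse to $d^3/\ord(\alpha)$ for $b_3(\ord(\xi))\ge 2$ comes from $\alpha^m$ being a primitive cube root of unity (so $\alpha^m+\alpha^{2m}=-1$) together with $\beta^{m^2}=1$ --- a cancellation in the $\alpha$-sum, not triviality of the $\beta$-factor. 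As written, your argument never establishes the stated formula in that subcase; you would need to run your ``hard'' computation there as well and check that it degenerates correctly.

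A smaller slip: in the remaining case you assert $9\mid d$, but condition~(\ref{eq:GT-ex-2-cond}) only forces $b_3(d)=b_3(N)\ge 1$; for $N=3$ (precisely the dimension-$27$ examples of Table~\ref{tab:FS-ind-27}) one has $d=3$. Your parameterisation $a'=3a''+r$ still makes sense with $a''$ ranging over a single value, so nothing breaks, but the claim itself is false and should not be leaned on. The rest of the sketch (the $9$-term sum over $(r,s)\in\mathbb{Z}_3\times\mathbb{Z}_3$ and the case split on $b_3(\ord(\xi))\in\{0,1\}$ via $\ord(\alpha)$ versus $\ord(\alpha^3)$) matches the paper's final case analysis and is sound in outline.
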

\begin{proof}
  Let $d = \gcd(N, n)$. Note that the solutions $x$ of the congruence equation $n x \equiv 0 \pmod{N}$ are $x = (N/d) \cdot r$ ($0 \le r < d$). By the previous lemma,
  \begin{equation*}
    \nu_n(H) = d \sum_{r, s = 0}^{d - 1} \alpha^{r s} \beta^{r^2 s^2}.
  \end{equation*}
  Since $3 E_2(n) \equiv 0 \pmod{N}$, $\beta$ is a third root of unity. It follows from easy number-theoretical arguments that $\beta \ne 1$ if and only if the condition~(\ref{eq:GT-ex-2-cond}) holds.

  Suppose that $\beta = 1$. Then, since $\alpha^d = 1$,
  \begin{equation*}
    \nu_n(H) = d \sum_{r, s = 0}^{d - 1} (\alpha^r)^s
    = d^2 \cdot \# \{ 0 \le r < d \mid \alpha^r = 1 \}
    = \frac{d^3}{\ord(\alpha)}.
  \end{equation*}

  Suppose that $\beta \ne 1$. Then $d$ is divisible by $3$. Let $m = \ord(\alpha^3)$. We compute
  \begin{align*}
    \nu_n(H) & = d \sum_{s = 0}^{\mathstrut d - 1} \sum_{r = 0}^{\mathstrut d/3 - 1}
    (\alpha^{3 r s} + \alpha^{(3r + 1)s} \beta^{s^2} + \alpha^{(3r + 2)s}) \beta^{s^2}. \\
    & = d \sum_{s = 0}^{\mathstrut d - 1} (1 + \alpha^s \beta^{s^2} + \alpha^{2s} \beta^{s^2}) \sum_{r = 0}^{d/3 - 1} (\alpha^{3s})^r \\
    & = \frac{d^2}{3} \sum_{i = 0}^{d/m - 1} (1 + \alpha^{m i} \beta^{(m i)^2} + \alpha^{2 m i} \beta^{(m i)^2}).
  \end{align*}
  Note that $d/m$ is divisible by 3. Since $\alpha^{3m} = \beta^3 = 1$, the $i$-th summand of the above sum is equal to $3$ if $i \equiv 0 \pmod{3}$ and $1 + (\alpha^m + \alpha^{2m}) \beta^{m^2}$ otherwise. Therefore
  \begin{equation*}
    \nu_n(H) = \frac{d^3}{9m}(5 + 2(\alpha^m + \alpha^{2m})\beta^{m^2}).
  \end{equation*}
  Note that $b_3(\ord(\xi)) = b_3(\ord(\alpha))$ since we have assumed~(\ref{eq:GT-ex-2-cond}). The rest of the proof is a case-by-case analysis according to $e = b_3(\ord(\xi))$. We have
  \begin{equation*}
    \alpha^m + \alpha^{2m} = \begin{cases}
      2  & (e = 0), \\
      -1 & (e = 1), \\
      -1 & (e \ge 2),
    \end{cases}
    \quad \beta^{m^2} = \begin{cases}
      \beta & (e = 0), \\
      \beta & (e = 1), \\
      1     & (e \ge 2),
    \end{cases}
    \quad \ord(\alpha) = \begin{cases}
      m  & (e = 0), \\
      3m & (e = 1), \\
      3m & (e \ge 2).
    \end{cases}
  \end{equation*}
  By compiling these equations, we complete the proof.
\end{proof}

This theorem shows that $\nu_n(H_{N^3}(\xi, \zeta))$ does not depends on $\zeta$ if $N$ is coprime to 3. The situation is different and very interesting when $N = 3$. Fix a primitive third root $\beta$ of unity. Then, by the classification result of Masuoka \cite[Theorem~3.1]{MR1364343}, non-commutative Hopf algebras of dimension $27$ are $H_{27}(\beta^i, \beta^j)$ ($i = 0, 1$; $j = 0, 1, 2$) up to isomorphism. By Theorem~\ref{thm:FS-ind-N3}, we have Table~\ref{tab:FS-ind-27} of Frobenius-Schur indicators of the regular representations of them.

\begin{table}
  \begin{tabular}{c|cccccc}
    $H$ & $\nu_1(H)$ & $\nu_3(H)$ & $\nu_9(H)$ & $\nu_{27}(H)$ \\ \hline
    $H_{27}(1,     1)$       & $1$ & $27$ & $27$ & $27$ \\
    $H_{27}(\beta, 1)$       & $1$ &  $9$ & $27$ & $27$ \\
    $H_{27}(1,     \beta)$   & $1$ & $3(5 + 4\beta^2)$ & $27$ & $27$ \\
    $H_{27}(\beta, \beta)$   & $1$ & $3(5 - 2\beta^2)$ & $27$ & $27$ \\
    $H_{27}(1,     \beta^2)$ & $1$ & $3(5 + 4\beta)$   & $27$ & $27$ \\
    $H_{27}(\beta, \beta^2)$ & $1$ & $3(5 - 2\beta)$   & $27$ & $27$
  \end{tabular}
  \caption{Indicators of regular representations of $H_{27}(\xi, \zeta)$}
  \label{tab:FS-ind-27}
\end{table}

This table gives us various information on categories of representations of them. For example, we have the following by Theorem~\ref{thm:FS-Hopf}:

\begin{corollary}
  Hopf algebras $H_{27}(\beta^i, \beta^j)$ $(i = 0, 1; j = 0, 1, 2)$ are not mutually monoidally Morita equivalent.
  $H_{27}(\beta^i, \beta^j)$ $(i = 0, 1$; $j = 1, 2)$ are neither quasitriangular nor coquasitriangular.
\end{corollary}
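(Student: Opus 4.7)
The plan is to read off both assertions directly from Table~\ref{tab:FS-ind-27} by invoking the relevant parts of Theorem~\ref{thm:FS-Hopf}. The key observation is that for the six Hopf algebras $H_{27}(\beta^i, \beta^j)$ with $(i,j)\in\{0,1\}\times\{0,1,2\}$, the single invariant $\nu_3$ already takes six distinct complex values, namely $27$, $9$, $3(5+4\beta^2)$, $3(5-2\beta^2)$, $3(5+4\beta)$, $3(5-2\beta)$. A short calculation with $\beta=-\tfrac{1}{2}+\tfrac{\sqrt{3}}{2}\sqrt{-1}$ shows that the last four lie at the points $9\mp 6\sqrt{3}\sqrt{-1}$ and $18\pm 3\sqrt{3}\sqrt{-1}$, which are pairwise distinct and distinct from $9$ and $27$.

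For the first assertion, I would combine this with the invariance statement recorded at the end of Section~\ref{sec:regular}: by Theorem~\ref{thm:FS-Hopf}(a), monoidal Morita equivalence of Hopf algebras implies equality of all $\nu_n$ of the regular representation. Since the six values $\nu_3(H_{27}(\beta^i,\beta^j))$ are pairwise distinct, no two of these Hopf algebras can be monoidally Morita equivalent.

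For the second assertion, I would use two further parts of Theorem~\ref{thm:FS-Hopf}. Part~(d) says $\nu_n(H)\in\mathbb{R}$ whenever $H$ admits a universal $R$-matrix, while part~(f) says $\nu_n(H^*)=\nu_n(H)$. Thus if $H$ were coquasitriangular, then $H^*$ would be quasitriangular and (d) applied to $H^*$ combined with (f) would force $\nu_n(H)\in\mathbb{R}$ for every $n$. For $j\in\{1,2\}$ the value $\nu_3(H_{27}(\beta^i,\beta^j))$ shown above is not real, so $H_{27}(\beta^i,\beta^j)$ is neither quasitriangular nor coquasitriangular.

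There is no real obstacle here — both statements are immediate consequences of the table once the appropriate parts of Theorem~\ref{thm:FS-Hopf} are in hand. The only thing to be careful about is the arithmetic check that the six entries of the $\nu_3$ column are genuinely distinct (and in particular non-real for $j=1,2$), which amounts to a one-line calculation in $\mathbb{Z}[\beta]\subset\mathbb{C}$.
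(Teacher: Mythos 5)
Your proposal is correct and matches the paper's argument: the paper derives this corollary directly from Table~\ref{tab:FS-ind-27} together with Theorem~\ref{thm:FS-Hopf}, exactly as you do, using part (a) with the six pairwise distinct values of $\nu_3$ for the Morita non-equivalence, and parts (d) and (f) with the non-reality of $\nu_3$ for $j=1,2$ to rule out quasitriangular and coquasitriangular structures. Your explicit evaluation of the four non-real entries as $9\mp 6\sqrt{3}\sqrt{-1}$ and $18\pm 3\sqrt{3}\sqrt{-1}$ is the small arithmetic check the paper leaves implicit, and it is right.
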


\subsection{A family of braided Hopf algebras of Suzuki}

\subsubsection{Suzuki's construction}

Suzuki \cite{MR1637640} introduced a family of finite-dimensional Hopf algebras $A_{N L}^{\alpha\beta}$, parametrized by integers $N \ge 1$, $L \ge 2$ and $\alpha, \beta \in \{ \pm 1 \}$, to give a systematic description of co\-semi\-simple Hopf algebras of low-dimensions. In fact, as Suzuki remarked, every non-trivial semisimple Hopf algebras of dimension eight and of dimension twelve belong to this family (in characteristic zero).

We recall his construction. As an algebra, $A_{N L}^{\alpha\beta}$ is generated by $x_{i j}$ ($i, j \in \mathbb{Z}_2$) with defining relations
\begin{gather*}
  x_{00}^2 = x_{11}^2, \quad
  x_{01}^2 = x_{10}^2, \quad
  x_{00}^{2N} + \alpha x_{01}^{2N} = 1, \quad
  x_{ij}^{} x_{kl}^{} = 0 \quad \text{(if $i - j \ne k - l$)}, \\
  \underbrace{x_{00}^{} x_{11}^{} x_{00}^{} \cdots}_L
  = \underbrace{x_{11}^{} x_{00}^{} x_{11}^{} \cdots}_L, \quad
  \underbrace{x_{01}^{} x_{10}^{} x_{01}^{} \cdots}_L
  = \beta \underbrace{x_{10}^{} x_{01}^{} x_{10}^{} \cdots}_L.
\end{gather*}
The coalgebra structure of $A_{N L}^{\alpha\beta}$ is given by
\begin{equation*}
  \Delta(x_{ij}) = x_{i 0} \otimes x_{0 j} + x_{i 1} \otimes x_{1 j}, \quad
  \varepsilon(x_{ij}) = \delta_{ij}
\end{equation*}
and the antipode is given by $S(x_{ij}) = x_{ji}^{4N-1}$. Suzuki showed that it has a set
\begin{equation*}
  \{ x_{00}^s \underbrace{x_{11}^{} x_{00}^{} x_{11}^{} \cdots}_t,
  \, x_{01}^s \underbrace{x_{10}^{} x_{01}^{} x_{10}^{} \cdots}_t
  \mid 1 \le s \le 2N, 0 \le t < L \}.
\end{equation*}
as a basis, and hence $\dim_{\mathbb{C}}(A_{N L}^{\alpha\beta}) = 4NL$.

Fix a quadruple $(N, L, \alpha, \beta)$ and denote $A_{NL}^{\alpha\beta}$ by $A$. Let $G^\vee$ be the group generated by central grouplike elements $h_{\pm} = x_{00}^2 \pm x_{01}^2 \in A$. $\mathbb{C}G^\vee$ is canonically isomorphic to $\mathbb{C}^G$ where $G = G^{\vee\vee}$ is the character group of $G^\vee$. As remarked in \cite[Remark~3.4]{MR1637640}, $A$ fits into an extension
\begin{equation}
  1 \longrightarrow \mathbb{C}^G \longrightarrow A \mathop{\longrightarrow}^{\pi} \mathbb{C}D_{2L} \longrightarrow 1.
\end{equation}
Here, $D_{2L}$ is the dihedral group of order $2L$ which has a Coxeter presentation
\begin{equation}
  D_{2L} = \left\langle s_0^{}, s_1^{} \mid s_0^2 = s_1^2 = (s_0 s_1)^L = 1 \right\rangle.
\end{equation}
The quotient map $\pi$ is given by $\pi(x_{ij}) = \delta_{ij} s_i$ ($i, j \in \mathbb{Z}/2\mathbb{Z}$).

Our first task is to find cocycles $\sigma$ and $\tau$ such that $A \cong \mathbb{C}^G \#_{\sigma, \tau} \mathbb{C}D_{2L}$. Following the theory of $\mathbb{C}^G$-rings, this is same as to find invertible elements $u_x \in A$ ($x \in D_{2L}$) such that $(\id_A \otimes \pi) \Delta(u_x) = u_x \otimes x$. For this purpose, we introduce some notations. Let $\widetilde{e}_i = \alpha^i x_{0i}^{2N}$ ($i \in \mathbb{Z}_2$). They are central orthogonal idempotents satisfying $\widetilde{e}_0^{} + \widetilde{e}_1^{} = 1$. For a non-negative integer $m$ and $i, j \in \mathbb{Z}_2$, we set
\begin{equation*}
  \chi_{ij}^{(m)} = \widetilde{e}_{i-j}^{} \cdot \underbrace{x_{ij}^{} x_{i+1, j+1}^{} x_{ij}^{} \cdots}_m.
\end{equation*}
The following lemma is useful for computation.

\begin{lemma}
  \label{lem:suzuki-eq}
  The following equations hold in $A$ for $0 \le m < 2L$:
  \begin{gather*}
    \Delta(\chi_{ij}^{(m)})
    = \chi_{i 0}^{(m)} \otimes \chi_{0 j}^{(m)} + \chi_{i 1}^{(m)} \otimes \chi_{1 j}^{(m)}, \\
    \chi_{11}^{(m)} = h_+^{-L+m} \chi_{00}^{(2L-m)}, \quad
    \chi_{01}^{(m)} = \beta h_+^{-L+m} \chi_{10}^{(2L-m)}.
  \end{gather*}
\end{lemma}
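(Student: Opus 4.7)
The plan is to first reduce the lemma to statements about the bare products $P_{ij}^{(m)} := \underbrace{x_{ij}\, x_{i+1,j+1}\, x_{ij}\, \cdots}_{m}$ underlying $\chi_{ij}^{(m)}$, and then to handle each part separately. The key preliminary observation is that $\widetilde{e}_0$ and $\widetilde{e}_1$ are central orthogonal idempotents summing to $1$: this uses $x_{00}^{2N} + \alpha x_{01}^{2N} = 1$, $\alpha^{2} = 1$, and the relation $x_{ab}\, x_{cd} = 0$ for $a - b \neq c - d$, which forces $\widetilde{e}_0 \widetilde{e}_1 = 0$. The same vanishing relation yields $x_{ij}\,\widetilde{e}_k = \delta_{k,\, i-j}\, x_{ij}$, so any product whose factors share a common index difference $d$ is already fixed by $\widetilde{e}_d$. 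In particular $\chi_{ij}^{(m)} = P_{ij}^{(m)}$, with the idempotent kept only for bookkeeping.

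For the coproduct identity, I would prove the stronger claim $\Delta(P_{ij}^{(m)}) = \sum_{k \in \mathbb{Z}_2} P_{ik}^{(m)} \otimes P_{kj}^{(m)}$ by induction on $m$. The base case $m = 1$ is the defining coalgebra structure. For the inductive step, use multiplicativity of $\Delta$ to write $\Delta(P_{ij}^{(m+1)}) = \Delta(P_{ij}^{(m)})\, \Delta(x_{i+m,\, j+m})$ and expand via the induction hypothesis. In the resulting double sum the relation $x_{ab}\, x_{cd} = 0$ (for $a-b \neq c-d$) kills all cross terms: the surviving indices must satisfy $\ell \equiv k + m \pmod 2$ in both tensor factors simultaneously, leaving exactly $\sum_{k} P_{ik}^{(m+1)} \otimes P_{kj}^{(m+1)}$.

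For (b) and (c), I would work on the summand $\widetilde{e}_0 A$ for (b) and $\widetilde{e}_1 A$ for (c); on these, $h_+$ acts by $x_{00}^2 = x_{11}^2$ and by $x_{01}^2 = x_{10}^2$ respectively, because the remaining quadratic term is killed by the zero relation. In $\widetilde{e}_0 A$, I would prove $\widetilde{e}_0 P_{11}^{(m)} = h_+^{m-L}\, \widetilde{e}_0 P_{00}^{(2L-m)}$ by a two-way induction on $m$ starting at $m = L$, where the base case is the Coxeter-type defining relation $P_{00}^{(L)} = P_{11}^{(L)}$. To pass from $m$ to $m \pm 1$, append on both sides the appropriate single generator $x_{00}$ or $x_{11}$: on the $P_{00}$-side it extends the alternating word by one factor, while on the $P_{11}$-side it collides with the matching last factor of $P_{11}^{(m)}$ to produce $x_{00}^2 = x_{11}^2 = h_+ \widetilde{e}_0$, giving exactly the required shift in $m$ and the extra power of $h_+$. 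Part (c) proceeds identically with $(x_{00}, x_{11})$ replaced by $(x_{10}, x_{01})$; the scalar $\beta$ enters only through the base case $P_{01}^{(L)} = \beta P_{10}^{(L)}$ and rides along as a global constant throughout the induction.

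The main obstacle will be the parity-based case analysis in the inductive step for (b) and (c), since the last factor of $P_{11}^{(m)}$ alternates with the parity of $m$, and one must verify that in either parity the collision with the newly appended factor produces $x_{00}^2 = x_{11}^2$ rather than a mismatched product. Since both quadratic relations give the same central element, and this element coincides with $h_+ \widetilde{e}_0$ on the relevant summand, the two parities resolve uniformly and the induction closes.
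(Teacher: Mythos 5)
The paper states this lemma without proof (it is introduced only as ``useful for computation''), so there is no official argument to compare against; your proposal supplies a correct one, and it is surely the intended one. The reduction to the bare words $P_{ij}^{(m)}$, the length induction for the coproduct with the annihilation relation $x_{ab}x_{cd}=0$ ($a-b\ne c-d$) forcing $\ell=k+m$ in both tensor legs simultaneously, and the two-way induction anchored at the Coxeter relations at $m=L$ (with $x_{00}^2=x_{11}^2$ acting as $h_+$ on $\widetilde{e}_0A$ and $x_{01}^2=x_{10}^2$ as $h_+$ on $\widetilde{e}_1A$, and invertibility of the grouplike $h_+$ used in the downward step) all check out. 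The only point to patch is the case $m=0$, which the lemma's range $0\le m<2L$ includes: there $\chi_{ij}^{(0)}=\widetilde{e}_{i-j}$ is \emph{not} the empty product, so the identification $\chi_{ij}^{(m)}=P_{ij}^{(m)}$ and the base case $m=1$ of your coproduct induction do not cover it. It is handled by the one-line computation
\begin{equation*}
  \Delta(x_{00}^{2N})=(x_{00}\otimes x_{00}+x_{01}\otimes x_{10})^{2N}
  =x_{00}^{2N}\otimes x_{00}^{2N}+x_{01}^{2N}\otimes x_{10}^{2N},
\end{equation*}
where the cross terms die by the same annihilation relation and $x_{10}^{2N}=x_{01}^{2N}$ gives $\Delta(\widetilde{e}_0)=\widetilde{e}_0\otimes\widetilde{e}_0+\widetilde{e}_1\otimes\widetilde{e}_1$, as required; for parts (b) and (c) your downward induction already reaches $m=0$ correctly since there the statement is phrased with the idempotent present.
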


Every element $x \in D_{2L}$ can be expressed as
\begin{equation*}
  x = \underbrace{s_0 s_1 s_0 \cdots}_{l} \quad (0 \le l < 2L)
\end{equation*}
in a unique way. This number $l$ is denoted by $\ell(x)$. Remark that, in general, $\ell(x)$ is {\em not} the minimal length of $x$ with respect to generators $s_0$ and $s_1$. Set $r = s_0 s_1$ and $s = s_0$. Then $D_{2L}$ has another group presentation
\begin{equation*}
  D_{2L} = \langle r, s \mid r^L = s^2 = 1, s r s^{-1} = r^{-1} \rangle.
\end{equation*}
Every element $x \in D_{2L}$ can be expressed as $x = r^i s^j$ ($0 \le i < L; j = 0, 1$) in a unique way. By using these $i$ and $j$, we have $\ell(x) = 2i + j$.

Using notations introduced in above, we define $u_x \in A$ ($x \in D_{2L}$) by
\begin{equation*}
  u_x = h_+^{-\floor{l/2}} (\chi_{00}^{(l)} + \eta^l \chi_{10}^{(l)})
\end{equation*}
where $l = \ell(x)$, $\eta$ is a fixed $2L$-th root of $\beta$ and $\floor{\ }$ is the floor function. By Lemma~\ref{lem:suzuki-eq}, $(\id_A \otimes \pi)\Delta(u_x) = u_x \otimes x$ for all $x \in D_{2L}$. The following lemma shows that all $u_x$ are invertible.

\begin{lemma}
  \label{lem:suzuki-cocycle-1}
  $u_x u_y = \widetilde{\sigma}(x, y) u_{xy}$ for all $x, y \in D_{2L}$ where
  \begin{equation*}
    \widetilde{\sigma}(x, y)
    = \left\{ (\widetilde{e}_0 + \eta^{2 \ell(y)} \widetilde{e}_1)
      \cdot h_+^{[\text{\rm $\ell(y)$ odd}]}
    \right\}^{[\text{\rm $\ell(x)$ odd}]} \in (\mathbb{C}G^\vee)^\times.
  \end{equation*}
\end{lemma}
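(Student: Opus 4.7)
The strategy is a direct computation: expand $u_x u_y$ using the definition, eliminate the cross terms by idempotent orthogonality, simplify the two surviving products, and match the result against $u_{xy}$ to read off $\widetilde{\sigma}(x,y)$. Setting $l_1 = \ell(x)$ and $l_2 = \ell(y)$, expansion gives
\[
u_x u_y = h_+^{-\floor{l_1/2} - \floor{l_2/2}} \bigl( \chi_{00}^{(l_1)} + \eta^{l_1} \chi_{10}^{(l_1)} \bigr) \bigl( \chi_{00}^{(l_2)} + \eta^{l_2} \chi_{10}^{(l_2)} \bigr).
\]
Since $\chi_{i0}^{(m)}$ carries the central factor $\widetilde{e}_i$ and $\widetilde{e}_0 \widetilde{e}_1 = 0$, the mixed products $\chi_{00}^{(l_1)} \chi_{10}^{(l_2)}$ and $\chi_{10}^{(l_1)} \chi_{00}^{(l_2)}$ vanish, reducing the calculation to evaluating $\chi_{00}^{(l_1)} \chi_{00}^{(l_2)}$ and $\chi_{10}^{(l_1)} \chi_{10}^{(l_2)}$.

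The evaluation splits by the parity of $l_1$. If $l_1$ is even, the alternating sequences concatenate without collision, giving $\chi_{i0}^{(l_1+l_2)}$; if $l_1+l_2 \ge 2L$, Lemma~\ref{lem:suzuki-eq} converts $\chi_{i0}^{(2L)}$ into $h_+^L \widetilde{e}_0$ (for $i=0$) or $\beta^{-1} h_+^L \widetilde{e}_1$ (for $i=1$), so that the asymmetric factor $\beta^{-1} = \eta^{-2L}$ combines with $\eta^{l_1+l_2}$ to produce precisely the weight $\eta^{\ell(xy)}$ appearing in $u_{xy}$. If $l_1$ is odd, the junction doubles a letter — $x_{00}^2$ in the $(00)$-product, $x_{10}^2 = x_{01}^2$ in the $(10)$-product — and the identities $x_{00}^2\, \widetilde{e}_0 = h_+\, \widetilde{e}_0$ and $x_{01}^2\, \widetilde{e}_1 = h_+\, \widetilde{e}_1$ (both following from the defining expression of $h_+$ together with $x_{ij} x_{kl} = 0$ for $i-j \ne k-l$) extract an $h_+$ and shrink each piece by one letter. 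Iterating this cascade yields
\[
\chi_{00}^{(l_1)} \chi_{00}^{(l_2)} = \begin{cases} h_+^{l_2}\, \chi_{00}^{(l_1-l_2)} & (l_1 \ge l_2), \\ h_+^{l_1}\, \chi_{11}^{(l_2-l_1)} & (l_1 < l_2), \end{cases}
\]
and an analogous formula for the $(10)$-product with $\chi_{01}$ in place of $\chi_{11}$. Applying Lemma~\ref{lem:suzuki-eq} converts the resulting $\chi_{11}^{(\cdot)}$ and $\chi_{01}^{(\cdot)}$ back to $\chi_{00}$ and $\chi_{10}$, producing further $h_+$-powers and an extra factor of $\beta = \eta^{2L}$ in the $\chi_{01}$-to-$\chi_{10}$ conversion.

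The main obstacle is the book-keeping through the ensuing case analysis (the parities of $l_1, l_2$ crossed with which side of the cascade runs out first, and whether wrap-around past $2L$ occurs). The two arithmetic facts that drive everything are $\ell(xy) \equiv \ell(x) + \ell(y) \pmod{2L}$ when $\ell(x)$ is even and $\ell(xy) \equiv \ell(x) - \ell(y) \pmod{2L}$ when $\ell(x)$ is odd (both derivable from the identity $s_0 (s_0 s_1)^{i} = (s_0 s_1)^{1-i} s_1$ in $D_{2L}$). Using these, the asymmetric factor $\eta^{2\ell(y)}$ on the $\widetilde{e}_1$-component in the odd-$\ell(x)$ case emerges from combining the weight $\eta^{l_1+l_2}$ attached to the $\chi_{10}$-side, the factor $\beta = \eta^{2L}$ introduced by the conversion, and the weight $\eta^{\ell(xy)}$ built into $u_{xy}$; meanwhile $h_+^{[\ell(y) \text{ odd}]}$ accounts for the residual $h_+$-imbalance between the cumulative cascade exponent and $\floor{\ell(xy)/2}$. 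Verifying the match in each of the four sub-cases determined by the parities of $\ell(x)$ and $\ell(y)$ completes the proof.
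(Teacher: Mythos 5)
Your argument is correct, but it is organized quite differently from the paper's. You compute $u_x u_y$ head-on for arbitrary $x, y$, expanding both factors in terms of $\chi_{00}^{(\ell)}$ and $\chi_{10}^{(\ell)}$, killing the cross terms with $\widetilde{e}_0 \widetilde{e}_1 = 0$, and running a letter-by-letter cascade at the junction of the two alternating words; the identities you invoke ($x_{00}^2 \widetilde{e}_0 = h_+ \widetilde{e}_0$ and $x_{01}^2 \widetilde{e}_1 = h_+ \widetilde{e}_1$, the wrap-around values $\chi_{00}^{(2L)} = h_+^L \widetilde{e}_0$ and $\chi_{10}^{(2L)} = \beta^{-1} h_+^L \widetilde{e}_1$, and the congruences $\ell(xy) \equiv \ell(x) \pm \ell(y) \pmod{2L}$ according to the parity of $\ell(x)$) are all correct, and the $h_+$-exponents and $\eta$-weights do close up to give exactly $\widetilde{\sigma}(x,y)$ in each of the four parity sub-cases. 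The paper instead writes $x = r^i s^j$ with $r = s_0 s_1$, $s = s_0$, notes $u_{r^i s^j} = u_r^i u_s^j$, and reduces the whole lemma to three atomic identities, $u_r^L = 1$, $u_s u_r = (\widetilde{e}_0 + \eta^4 \widetilde{e}_1) u_{sr}$ and $u_s^2 = (\widetilde{e}_0 + \eta^2 \widetilde{e}_1) h_+$, followed by a formal case analysis on parities. The paper's route is shorter on paper because only the generators require a genuine computation in the algebra; yours is heavier in bookkeeping but more self-contained, since it never needs the (itself not entirely free) facts $u_{r^i} = (u_r)^i$ and $u_{r^i s} = (u_r)^i u_s$. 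One point you should make explicit: pulling the doubled letter out of the middle of a word uses that $x_{00}^2$ and $x_{01}^2$ are central, which follows from the centrality of $h_\pm = x_{00}^2 \pm x_{01}^2$; without that remark the step ``extract an $h_+$ and shrink each piece by one letter'' is not justified, because the identity $x_{00}^2 \widetilde{e}_0 = h_+ \widetilde{e}_0$ by itself only applies when the square sits next to the idempotent.
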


Here, Iverson bracket is used to make the definition of $\widetilde{\sigma}(x, y)$ compact.

\begin{proof}
  Note that, by definition,
  \begin{equation*}
    u_r = h_+^{-1} (\widetilde{e}_0 x_{00} x_{01} + \eta^2 \cdot \widetilde{e}_1 x_{10} x_{01})
    \text{\quad and \quad}
    u_s = \widetilde{e}_0 x_{00} + \eta \cdot \widetilde{e}_1 x_{10}.
  \end{equation*}
  Verify the following equalities by using Lemma~\ref{lem:suzuki-eq}:
  \begin{equation*}
    u_r^L = 1, \quad
    u_s \cdot u_r = (\widetilde{e}_0 + \eta^4 \widetilde{e}_1) u_{sr}, \quad
    u_s \cdot u_s = (\widetilde{e}_0 + \eta^2 \widetilde{e}_1) h_+.
  \end{equation*}
  Let $x \in D_{2L}$. Then $\ell(x)$ is even (resp. odd) if and only if $x = r^i$ (resp. $x = r^i s$) for some $i$. Note that $u_{r^i} = (u_r)^i$ and $u_{r^i s} = (u_r)^i \cdot u_s$. The proof can be done by a case-by-case analysis according to parities of $\ell(x)$ and $\ell(y)$.
\end{proof}

We define an automorphism $x \mapsto \overline{x}$ on $D_{2L}$ by $\overline{s_0^{}} = s_{1}^{}$ and $\overline{s_1^{}} = s_0^{}$.

\begin{lemma}
  \label{lem:suzuki-cocycle-2}
  $\Delta(u_x) = u_x \otimes \widetilde{e}_0 u_x + u_{\overline{x}} \otimes \widetilde{e}_1 u_x$ for all $x \in D_{2L}$.
\end{lemma}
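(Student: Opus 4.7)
The plan is to verify the identity by direct computation, matching the four tensor summands produced by applying $\Delta$ to the defining formula of $u_x$ against the expansion of the right-hand side. Set $l = \ell(x)$. The case $l = 0$ is trivial: $u_1 = \widetilde{e}_0 + \widetilde{e}_1 = 1$ is grouplike, so $\Delta(u_1) = 1 \otimes 1 = u_1 \otimes \widetilde{e}_0 u_1 + u_{\overline{1}} \otimes \widetilde{e}_1 u_1$. Hence assume $l > 0$.

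First, I would apply $\Delta$ to $u_x = h_+^{-\floor{l/2}}(\chi_{00}^{(l)} + \eta^l \chi_{10}^{(l)})$. Since $h_+$ is grouplike, this reduces, via the coproduct formula $\Delta(\chi_{ij}^{(l)}) = \chi_{i0}^{(l)} \otimes \chi_{0j}^{(l)} + \chi_{i1}^{(l)} \otimes \chi_{1j}^{(l)}$ in Lemma~\ref{lem:suzuki-eq}, to the sum
\[ \Delta(u_x) = (h_+^{-\floor{l/2}} \otimes h_+^{-\floor{l/2}})\bigl(\chi_{00}^{(l)} \otimes \chi_{00}^{(l)} + \chi_{01}^{(l)} \otimes \chi_{10}^{(l)} + \eta^l \chi_{10}^{(l)} \otimes \chi_{00}^{(l)} + \eta^l \chi_{11}^{(l)} \otimes \chi_{10}^{(l)}\bigr). \]

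Second, I would identify $u_{\overline{x}}$ in terms of $\chi_{ij}^{(l)}$. The preliminary fact is $\ell(\overline{x}) = 2L - l$; this follows from a short case check using the presentation $D_{2L} = \langle r, s \rangle$, the relations $\overline{r^i s^j} = r^{-i} s_1^j$ and $s_1 = r^{L-1} s$, and the canonical forms $r^a s^j$ with $0 \le a < L$. Applying the conversion identities $\chi_{00}^{(2L-l)} = h_+^{L-l} \chi_{11}^{(l)}$ and $\chi_{10}^{(2L-l)} = \beta^{-1} h_+^{L-l} \chi_{01}^{(l)}$ from Lemma~\ref{lem:suzuki-eq}, together with $\eta^{2L} = \beta$ and the arithmetic identity $L - l - \floor{(2L-l)/2} = -\floor{l/2}$ (verified separately for $l$ even and odd), one obtains the compact form
\[ u_{\overline{x}} = h_+^{-\floor{l/2}}\bigl(\chi_{11}^{(l)} + \eta^{-l} \chi_{01}^{(l)}\bigr). \]

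Third, since each $\chi_{ij}^{(l)}$ carries the orthogonal central factor $\widetilde{e}_{i-j}$, one has $\widetilde{e}_0 u_x = h_+^{-\floor{l/2}} \chi_{00}^{(l)}$ and $\widetilde{e}_1 u_x = \eta^l h_+^{-\floor{l/2}} \chi_{10}^{(l)}$. Substituting these into $u_x \otimes \widetilde{e}_0 u_x + u_{\overline{x}} \otimes \widetilde{e}_1 u_x$ and distributing the tensor products, the four resulting summands match, term by term, the four summands in the expansion of $\Delta(u_x)$ computed in the first step. I expect the most delicate step to be the identification of $u_{\overline{x}}$, which combines the length inversion $l \mapsto 2L - l$, the twist by $\beta$ coming from the asymmetric relation for $\chi_{01}^{(m)}$, and a clean cancellation in the exponent of $h_+$; once this compact form is in hand, the matching is essentially bookkeeping.
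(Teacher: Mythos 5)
Your proposal is correct and follows essentially the same route as the paper: both expand $\Delta(u_x)$ into four summands via the coproduct formula of Lemma~\ref{lem:suzuki-eq}, use $\ell(\overline{x}) = 2L - \ell(x)$ together with the conversion identities $\chi_{11}^{(m)} = h_+^{-L+m}\chi_{00}^{(2L-m)}$, $\chi_{01}^{(m)} = \beta h_+^{-L+m}\chi_{10}^{(2L-m)}$ and the floor identity $\floor{(2L-l)/2} = L + \floor{l/2} - l$, and then match terms. The only cosmetic difference is that you rewrite $u_{\overline{x}}$ in terms of the $\chi^{(l)}_{ij}$ while the paper rewrites the offending summands of $\Delta(u_x)$ in terms of the $\chi^{(2L-l)}_{ij}$ -- the same computation run in the opposite direction.
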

\begin{proof}
  Since the claim is obvious when $x = 1$, we assume that $x \ne 1$. Then,
  \begin{equation*}
    \overline{x}
    = \underbrace{s_1^{} s_0^{} s_1^{} \cdots}_{\ell(x)}
    = \underbrace{s_0^{} s_1^{} \cdots s_0^{} s_1^{}}_{2L} \underbrace{s_1^{} s_0^{} s_1^{} \cdots}_{\ell(x)}
    = \underbrace{s_0^{} s_1^{} s_0^{} \cdots}_{2L-\ell(x)}.
  \end{equation*}
  Hence we have $\ell(\overline{x}) = 2L - \ell(x)$. By Lemma~\ref{lem:suzuki-eq},
  \begin{align*}
    \Delta(h_+^{\floor{l/2}} u_x)
    & = \chi_{00}^{(l)} \otimes \chi_{00}^{(l)} + \chi_{01}^{(l)} \otimes \chi_{10}^{(l)}
    + \eta^l (\chi_{10}^{(l)} \otimes \chi_{00}^{(l)} + \chi_{11}^{(l)} \otimes \chi_{10}^{(l)}) \\
    & = (\chi_{00}^{(l)} + \eta^l \chi_{10}^{(l)}) \otimes \chi_{00}^{(l)}
    + h_+^{- L + l} (\chi_{00}^{(l')} + \eta^{l'} \chi_{10}^{(l')}) \otimes \eta^l \chi_{10}^{(l)}
  \end{align*}
  where $l = \ell(x)$ and $l' = \ell(\overline{x})$. Note that $\floor{\frac{1}{2}l'} = L + \floor{\frac{1}{2}l} - l$. We obtain
  \begin{equation*}
    \Delta(u_x) = u_x \otimes \widetilde{e}_0 u_x + u_{\overline{x}} \otimes \widetilde{e}_1 u_x
  \end{equation*}
  by multiplying both sides by $\Delta(h_+^{-\floor{l/2}}) = h_+^{-\floor{l/2}} \otimes h_+^{-\floor{l/2}}$.
\end{proof}

\subsubsection{Cyclic case}

We assume that $(N, \alpha) \ne ({\rm even}, +1)$. Then, $G^\vee$ is a cyclic group of order $2N$ generated by $h = x_{00}^2 - \alpha x_{01}^2$ \cite[Proposition~3.3]{MR1637640}. The character group $G = G^{\vee\vee}$ of $G^\vee$ is a cyclic group of order $2N$ generated by a group homomorphism
\begin{equation*}
  b: G^\vee \to \mathbb{C}^\times, \quad h^k \mapsto \zeta_{2N}^k \quad (k \in \mathbb{Z})
\end{equation*}
where $\zeta_{2N}^{}$ is a fixed primitive $2N$-th root of unity. By the orthogonality relation of characters, the dual basis $\{ e_k \}$ of $\{ b^k \}$ is given by
\begin{equation*}
  e_k = \frac{1}{2N} \sum_{i = 0}^{2N} \zeta_{2N}^{-ik} h^i \quad (k = 0, 1, \cdots, 2N - 1).
\end{equation*}
The elements $\widetilde{e}_0, \widetilde{e}_1, h_\pm \in \mathbb{C}G^\vee$ can be written as linear combinations of $e_k$ as
\begin{equation*}
  \widetilde{e}_0^{} = \sum_{i = 0}^{N-1} e_{2i}, \quad
  \widetilde{e}_1^{} = \sum_{i = 0}^{N-1} e_{2i+1} \quad \text{and} \quad
  h_{\pm} = \sum_{i = 0}^{2N-1} (\mp\alpha\zeta_{2N})^i \, e_i.
\end{equation*}
Comparing Lemma~\ref{lem:suzuki-cocycle-1} and Lemma~\ref{lem:suzuki-cocycle-2} with (\ref{eq:extension-mul}) and (\ref{eq:extension-com}), we have the following.

\begin{lemma}
  Assume that $(N, \alpha) \ne (\text{even}, +1)$. The right group action of $G$ on $D_{2L}$ given by $b \triangleright x = \overline{x} \ (x \in D_{2L})$ and the trivial action of $D_{2L}$ on $G$ make $(G, D_{2L})$ into a matched pair. Define $\sigma: G \times D_{2L} \times D_{2L} \to \mathbb{C}^\times$ by
  \begin{equation*}
    \sigma(b^i; x, y) = \left\{ \eta^{2 \ell(y) [\text{\rm $i$ odd}]}
      \cdot (-\alpha \zeta_{2N})^{i [\text{\rm $\ell(y)$ odd}]}
    \right\}^{[\text{\rm $\ell(x)$ odd}]}.
  \end{equation*}
  $A_{NL}^{\alpha\beta}$ is isomorphic to $\mathbb{C}^G \#_{\sigma, 1} \mathbb{C}D_{2L}$ as a Hopf algebra.
\end{lemma}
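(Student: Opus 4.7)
The plan is to verify the matched pair structure and then extract the cocycles $\sigma$ and $\tau = 1$ directly from Lemma~\ref{lem:suzuki-cocycle-1} and Lemma~\ref{lem:suzuki-cocycle-2} by comparing with the bicrossed product relations~(\ref{eq:extension-mul}) and~(\ref{eq:extension-com}).

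First I would verify that $x \mapsto \overline{x}$ is an involutive automorphism of $D_{2L}$: it permutes the Coxeter generators $s_0, s_1$, so preserves the defining relations, which are symmetric in $s_0$ and $s_1$. Since $b \in G$ has even order $2N$, the assignment $b \triangleright x = \overline{x}$ extends to a right action of $G$ on $D_{2L}$ factoring through the surjection $G \to G/\langle b^2 \rangle \cong \mathbb{Z}_2$; explicitly, $b^k \triangleright x$ equals $x$ if $k$ is even and $\overline{x}$ if $k$ is odd. With the trivial action of $D_{2L}$ on $G$, the matched pair compatibility axioms collapse to the single statement that $G$ acts by group homomorphisms, which we have just verified.

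Next I would identify $A = A_{NL}^{\alpha\beta}$ as the underlying vector space of a bicrossed product. By Lemma~\ref{lem:suzuki-cocycle-1} each $u_x$ is invertible, and by construction $(\id_A \otimes \pi)\Delta(u_x) = u_x \otimes x$. A dimension count $4NL = |G^\vee| \cdot |D_{2L}|$ then yields the decomposition $A = \bigoplus_{x \in D_{2L}} (\mathbb{C}G^\vee)\, u_x$, so $A$ realizes an abelian extension of $\mathbb{C}D_{2L}$ by $\mathbb{C}^G$ and must be isomorphic to $\mathbb{C}^G \#_{\sigma, \tau} \mathbb{C}D_{2L}$ for some cocycles $\sigma$ and $\tau$ associated with the matched pair above. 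The crux is then to compute these cocycles from the data of the two preceding lemmas. Using the dual-basis identification $\mathbb{C}G^\vee \cong \mathbb{C}^G$, we have $\widetilde{e}_0 + \eta^{2\ell(y)}\widetilde{e}_1 = \sum_k \eta^{2\ell(y)[k \text{ odd}]} e_k$ and $h_+ = \sum_k (-\alpha\zeta_{2N})^k e_k$. Their componentwise product raised to the power $[\ell(x) \text{ odd}]$ recasts the formula in Lemma~\ref{lem:suzuki-cocycle-1} as $\widetilde{\sigma}(x,y) = \sum_k \sigma(b^k;x,y) e_k$ with $\sigma$ exactly as stated, by comparison with~(\ref{eq:extension-mul}). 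Similarly, expanding $\widetilde{e}_j = \sum_{k \equiv j \,(\text{mod } 2)} e_k$ in Lemma~\ref{lem:suzuki-cocycle-2} rewrites $\Delta(u_x)$ as $\sum_k u_{b^k \triangleright x} \otimes e_k u_x$, which matches~(\ref{eq:extension-com}) with $\tau \equiv 1$.

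The main obstacle is essentially bookkeeping: tracking the two independent parities $[\ell(x) \text{ odd}]$ and $[k \text{ odd}]$ in the expansion of $\widetilde{\sigma}$, and maintaining consistency among the sign conventions involving $\zeta_{2N}$, $-\alpha$, and the chosen $\eta$ with $\eta^{2L} = \beta$. Once the matched pair is confirmed, however, the Hopf algebra isomorphism falls out at essentially no extra cost, because Lemmas~\ref{lem:suzuki-cocycle-1} and~\ref{lem:suzuki-cocycle-2} were tailored to reproduce precisely the bicrossed product structure equations~(\ref{eq:extension-mul}) and~(\ref{eq:extension-com}) for our chosen section $x \mapsto u_x$.
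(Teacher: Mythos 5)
Your proposal is correct and follows essentially the same route as the paper, which obtains this lemma precisely by expanding $\widetilde{e}_0$, $\widetilde{e}_1$ and $h_+$ in the dual basis $\{e_k\}$ of $\{b^k\}$ and comparing Lemma~\ref{lem:suzuki-cocycle-1} and Lemma~\ref{lem:suzuki-cocycle-2} with the structure equations~(\ref{eq:extension-mul}) and~(\ref{eq:extension-com}). Your added remarks on the matched-pair axioms collapsing when one action is trivial and on the cleft-extension/dimension count are details the paper leaves implicit, but they are the standard justification and introduce no new idea.
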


Let $\Gamma_{NL} = G \bicross D_{2L}$ be the bicrossed product. Both $G$ and $D_{2L}$ are regarded as subgroups of $\Gamma_{NL}$. Note that $\Gamma_{NL}$ has a presentation
\begin{equation*}
  \Gamma_{NL} = \left\langle b, r, s
    \left|
      \begin{gathered}
        b^{2N} = r^L = s^2 = 1, s r s^{-1} = r^{-1}, \\
        b r b^{-1} = r^{-1}, b s b^{-1} = r^{-1} s
      \end{gathered}
    \right.
  \right\rangle.
\end{equation*}
By~(\ref{eq:extension-3-cocycle}), the 3-cocycle $\omega \in Z^3(\Gamma_{NL}, \mathbb{C}^\times)$ associated with $A$ is given by
\begin{equation*}
  \omega(b^i x, b^j y, b^k z) = \left\{
    \eta^{(-1)^j 2 \ell(z) [\text{\rm $i$ odd}]}
    \cdot (-\alpha\zeta_{2N})^{i [\text{\rm $\ell(z)$ odd}]}
  \right\}^{[\text{\rm $\ell(y)$ odd}]}
\end{equation*}
for $i, j, k \in \mathbb{Z}_{2N}$ and $x, y, z \in D_{2L}$.

\begin{remark}
  \label{rem:suzuki-c-1}
  $c(\omega) \le 2$. This follows from Corollary~\ref{lem:GT-ind-estim}, since $\langle b, r \rangle$ is a subgroup of index 2 such that the restriction of $\omega$ to this subgroup is trivial.
\end{remark}

\begin{theorem}
  \label{thm:suzuki-FS-ind-cyc}
  Suppose that $(N, \alpha) \ne ({\rm even}, +1)$. Then the $n$-th Frobenius-Schur indicator of the regular representation of $A = A_{NL}^{\alpha\beta}$ is given by
  \begin{align*}
    \nu_n(A) & = \gcd(N, n) \gcd(L, n) \\
    & + [b_2(n) - 1 \ge b_2(N)] \cdot 2L \gcd(N, n) \\
    & + [b_2(n) - 1 \ge b_2(N), b_2(L)] \cdot \varepsilon_{N L}(n) \gcd(N, n) \gcd(L, n)
  \end{align*}
  where
  \begin{equation*}
    \varepsilon_{N L}(n) = (-\alpha)^{[b_2(n) = 1]} (-1)^{[b_2(n) = b_2(N) + 1]} \beta^{[b_2(n) = b_2(L) + 1]}.
  \end{equation*}
\end{theorem}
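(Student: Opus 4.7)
Starting from Corollary~\ref{cor:FS-extension-1}, we have $\nu_n(A) = \sum_{g \in \Gamma_{NL}} \widetilde{\omega}_n(g)$ with $\widetilde{\omega}_n$ as in~(\ref{eq:omega-n}). The plan is to partition $\Gamma_{NL}$ into $H = \langle b, r\rangle$ and the coset $Hs$, and within each partition further by the parity of the exponent of $b$ in the canonical form $g = b^i r^j s^k$. The four resulting cases should match, in order, the three summands of the formula in the theorem.

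On $H$, the explicit formula for $\omega$ gives the identically trivial value on triples from $H$ (this is why $c(\omega) \le 2$ in Remark~\ref{rem:suzuki-c-1}), so $\widetilde{\omega}_n(g) = \delta_{g^n,1}$ and the contribution is just a count. Using the semidirect rule $(b^i r^j)^n = b^{ni} r^{e_n}$, with $e_n = nj$ when $i$ is even and $e_n = j \cdot [n \text{ odd}]$ when $i$ is odd, the $i$-even part contributes $\gcd(N,n)\gcd(L,n)$, and the $i$-odd part contributes $L\gcd(N,n)$ exactly when the congruence $n(2k+1) \equiv 0 \pmod{2N}$ has solutions---equivalently, when $b_2(n) - 1 \ge b_2(N)$.

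For $g = b^i r^j s \in Hs$, I would first compute $g^2$ (equal to $b^{2i}$ if $i$ is even, and to $b^{2i} r^{-2j-1}$ if $i$ is odd), which forces $n$ to be even. The key cocycle observation is that the outer Iverson bracket $[\ell(y) \text{ odd}]$ in the formula for $\omega$ kills all contributions from even $k$ in $\prod_{k=1}^{n-1} \omega(g, g^k, g)$; only the $m = n/2$ odd-index factors survive, and for $k = 2l+1$ the $b$-exponent of $g^k$ is $(2l+1)i$, whose parity equals that of $i$, so the sign $(-1)^{i_2}$ appearing in $\omega$ does not depend on $k$. This collapses $\widetilde{\omega}_n(g)$ to $\eta^{-2m(2j+1)[i \text{ odd}]}(-\alpha\zeta_{2N})^{im}$, so that the sum over the valid $(i,j)$ separates into independent sums over the relevant cyclic subgroups of $\mathbb{Z}_{2N}$ and $\mathbb{Z}_L$.

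Each of these two sums is a geometric progression that either vanishes or evaluates to its size times a sign, depending on whether the squared common ratio equals $1$. A direct calculation using $\zeta_{2N}^N = -1$ and $\eta^{2L} = \beta$ shows that the squared ratios are always $+1$ under the respective non-vanishing conditions $b_2(n) - 1 \ge b_2(N)$ (for the $i$-sum) and $b_2(n) - 1 \ge b_2(L)$ (for the $j$-sum); each sum then equals a sign times $\gcd(N,n)$ or $\gcd(L,n)$. The main technical step will be translating the resulting sign factors---$(-\alpha)^{m \bmod 2}$, $(-1)^{(m/\gcd(N,n)) \bmod 2}$, and $\beta^{(m/\gcd(L,n)) \bmod 2}$---into the Iverson-bracket form of $\varepsilon_{NL}(n)$ via the identity $b_2(m/d) = b_2(n) - 1 - b_2(d)$ valid whenever $b_2(d) \le b_2(n) - 1$. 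Collecting the four partial contributions should then yield the stated formula.
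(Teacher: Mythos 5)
Your proposal is correct and follows essentially the same route as the paper: the four pieces you describe (the subgroup $\langle b,r\rangle$ and the coset $\langle b,r\rangle s$, each split by the parity of the $b$-exponent) are exactly the four cosets $H$, $bH$, $sH$, $bsH$ of $H=\langle b^2,r\rangle$ used in the paper, the triviality of $\omega$ on $\langle b,r\rangle$ and the collapse of $\widetilde{\omega}_n$ on $\langle b,r\rangle s$ to $\eta^{-2m(2j+1)[i\ \mathrm{odd}]}(-\alpha\zeta_{2N})^{im}$ match the paper's computations of $W_n(\cdot)$ on each coset, and the translation of the surviving signs into the Iverson brackets of $\varepsilon_{NL}(n)$ is the same final step. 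The only cosmetic difference is that you treat the two parities in $\langle b,r\rangle s$ by one uniform formula where the paper handles $sH$ and $bsH$ separately.
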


\begin{proof}
  Let $\widetilde{\omega}_n: \Gamma \to \mathbb{C}$ denote the function given by~(\ref{eq:omega-n}). For positive integers $r$ and  $s$, let $r \sslash s := r / \gcd(r, s)$. Note that $r x \equiv 0 \pmod{s}$ if and only if $x \equiv 0 \pmod{r \sslash s}$. Equations like $[\text{$r \sslash s$ odd}] = [b_2(r) \le b_2(s)]$ will be used many times.

  Let $H$ be the subgroup of $\Gamma_{N L}$ generated $b^2$ and $r$. It is a normal subgroup of $\Gamma_{N L}$ isomorphic to $\mathbb{Z}_N \times \mathbb{Z}_L$. We have
  \begin{equation*}
    \nu_n(A) = W_n(H) + W_n(b H) + W_n(s H) + W_n(b s H)
  \end{equation*}
  where $W_n(X)$ ($X \subset \Gamma_{N L}$) is given by the same formula as~(\ref{eq:GT-partial-sum}).

  Suppose that $n$ is odd. Let $g \in \Gamma_{N L}$ with $g^n = 1$. Then $g \in H$, since every element of $\Gamma_{N L} \setminus H$ has an even order. Since the restriction of $\omega$ to $H$ is trivial, we have
  \begin{equation*}
    \nu_n(A) = \#\{ g \in H \mid g^n = 1 \} = \gcd(N, n) \gcd(L, n).
  \end{equation*}
  Hence the equation holds for $n$.

 Suppose that $n = 2k$ is even. Then, $W_n(H) = \gcd(N, n) \gcd(L, n)$ in the same way as above. We compute (i) $W_n(bH)$, (ii) $W_n(s H)$ and (iii) $W_n(b s H)$ in what follows.

 (i) Since $\omega|_{b H \times b H \times bH} \equiv 1$, we have $W_n(b H) = \# \{ g \in b H \mid g^n = 1 \}$. Let $g \in b H$. Then $g = b^{2i + 1} r^j$ for some $i, j$. $g^n = 1$ if and only if $k(2i + 1) \equiv 0 \pmod{N}$. If $N \sslash k$ is even, this congruence equation has no solutions. Otherwise, the number of this solutions is $\gcd(N, k)$. Summarizing, we have
 \begin{equation*}
   W_n(b H) = [\text{$N \sslash k$ odd}] \cdot L \gcd(N, k) = [b_2(n) \ge b_2(N) + 1] \cdot L \gcd(N, n).
 \end{equation*}

 (ii) Let $g \in s H$. Then $g = b^{2 i} r^j s$ for some $i, j$. We have that $\widetilde{\omega}_n(g) = \delta_{g^n, 1} \cdot \zeta_{2N}^{2 i k}$ and that $g^n = 1$ if and only if $2 i k \equiv 0 \pmod{N}$. If $N \sslash k$ is odd, then the solutions of this congruence equation are $i = (N \sslash k) \cdot m$ ($0 \le m < \gcd(N, k)$). For these $i$, we have $\zeta_{2N}^{2 i k} = 1$, and hence $W_n(s H) = L \cdot \gcd(N, k)$. On the other hand, if $N \sslash k$ is even, the solutions of this congruence equation are $i = \frac{1}{2} (N \sslash k) \cdot m$ ($0 \le  m < 2 \gcd(N, k)$). For these $i$, we have $\zeta_{2N}^{2 i k} = (-1)^m$, and hence $W_n(s H) = 0$. Summarizing, we have
 \begin{equation*}
   W_n(s H) = [b_2(n) \ge b_2(N) + 1] \cdot L \gcd(N, n).
 \end{equation*}

 (iii) Let $g \in b s H$. Then $g = b^{2i+1} r^j s$ for some $i, j$. We have that $\widetilde{\omega}_n(g) = \delta_{g^n, 1} \cdot (-\alpha \eta^{-2(2j+1)} \zeta_{2N}^{2i+1})^k$ and that $g^n = 1$ if and only if both the following congruence equations hold:
 \begin{align}
   \label{eq:suzuki-cong-1} (2i + 1) k & \equiv 0 \pmod{N} \\
   \label{eq:suzuki-cong-2} (2j + 1) k & \equiv 0 \pmod{L}
 \end{align}
 If $N \sslash k$ is even, the congruence equation (\ref{eq:suzuki-cong-1}) has no solutions. Similarly, (\ref{eq:suzuki-cong-2}) has no solutions if $L \sslash k$ is even. Therefore, $W_n(b s H) = 0$ if either $N \sslash k$ or $L \sslash k$ is even.

 Suppose that both $N \sslash k$ and  $L \sslash k$ are odd. Write $N \sslash k = 2 i_0 + 1$. Then the solutions of the congruence equation~(\ref{eq:suzuki-cong-1}) are $i = i_0 + (N \sslash k) \cdot m$ ($0 \le m < \gcd(N, k)$). For these $i$, $(2i+1)k \equiv k \sslash N \pmod{2N}$. Since $N \sslash k$ is odd,
 \begin{equation*}
   \zeta_{2N}^{k(2i+1)} = (-1)^{k \sslash N} = (-1)^{[b_2(k) = b_2(N)]} = (-1)^{[b_2(n) = b_2(N) + 1]}.
 \end{equation*}
 Similarly, if $j$ satisfies~(\ref{eq:suzuki-cong-2}), $\eta^{-2k(2j+1)} = \beta^{[b_2(n) = b_2(L) + 1]}$. Summarizing, we have
 \begin{align*}
   W_n(b s H) & = [b_2(n) - 1 \ge b_2(N), b_2(L)] \cdot \varepsilon_{N L}(n) \gcd(N, n) \gcd(L, n).
 \end{align*}

 Combining (i), (ii) and (iii), we have the result.
\end{proof}

\subsubsection{Non-cyclic case}

We next suppose that $(N, \alpha) = ({\rm even}, +1)$. Then $G^\vee$ has a presentation
\begin{equation*}
  G^\vee = \langle h_+, h_- \mid h_+ h_- = h_- h_+, h_{\pm}^N = 1, h_+^2 = h_-^2 \rangle.
\end{equation*}
Define group homomorphism $a, b: G^\vee \to \mathbb{C}^\times$ by
\begin{equation*}
  \langle a, h_{\pm} \rangle = \zeta_N, \quad \langle b, h_{\pm} \rangle = \pm 1
\end{equation*}
where $\zeta_N$ is a fixed primitive $N$-th root of unity. Let $(e_0^+, \cdots, e_{N - 1}^+, e_0^{-}, \cdots, e_{N - 1}^{-})$ be the dual basis of $(1, a, \cdots, a^{N - 1}, b, a b, \cdots, a b^{N - 1})$. We have
\begin{gather*}
  e_{k}^\pm = \frac{1}{2N} (1 \pm \zeta_N^{-k} h_-) \sum_{i = 0}^{N - 1} \zeta_N^{-k i} h_+^i
  \quad (k = 0, 1, \cdots, N - 1), \\
  \widetilde{e}_0 = \sum_{i = 0}^{N - 1} e_i^+, \quad
  \widetilde{e}_1 = \sum_{i = 0}^{N - 1} e_i^-, \quad \text{and} \quad
  h_{\pm} = \sum_{i = 0}^{N - 1} \zeta_N^i (e_i^+ \pm e_i^-).
\end{gather*}
Comparing Lemma~\ref{lem:suzuki-cocycle-1} and Lemma~\ref{lem:suzuki-cocycle-2} with~(\ref{eq:extension-mul}) and~(\ref{eq:extension-com}), we have the following:

\begin{lemma}
  Assume that $(N, \alpha) = (\text{\rm even}, +1)$. Then the left group action of $G$ on $D_{2L}$ given by $a \triangleright x = x$, $b \triangleright x = \overline{x}$ and the trivial action of $D_{2L}$ on $G$ make $(G, D_{2L})$ into a matched pair. Define a map $\sigma: G \times D_{2L} \times D_{2L} \to \mathbb{C}^\times$ by
  \begin{equation*}
    \sigma(a^i b^j; x, y) = \left\{
      \eta^{2 \ell(y) [\text{\rm $j$ odd}]}
      \cdot ((-1)^j \zeta_N^i)^{[\text{\rm $\ell(y)$ odd}]}
    \right\}^{[\text{\rm $\ell(x)$ odd}]}
  \end{equation*}
  Then $A_{NL}^{+\beta} \cong \mathbb{C}^G \#_{\sigma, 1} \mathbb{C}D_{2L}$ as a Hopf algebra.
\end{lemma}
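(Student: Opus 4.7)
My plan is to mirror the proof of the cyclic-case lemma (the preceding one), translating Lemma~\ref{lem:suzuki-cocycle-1} and Lemma~\ref{lem:suzuki-cocycle-2} into the language of abelian extensions by means of the algebra isomorphism $\mathbb{C}G^\vee \cong \mathbb{C}^G$ set up in the paragraph just before the present lemma. The argument splits into two halves.

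First, I would derive the matched-pair structure and the cocycle $\tau \equiv 1$ from Lemma~\ref{lem:suzuki-cocycle-2}. The elements $h_\pm$ are central in $A$ (as recorded in~\cite{MR1637640}), so every element of $\mathbb{C}G^\vee \cong \mathbb{C}^G$ commutes with every $u_x$; comparison with~(\ref{eq:extension-mul}) therefore forces the right action of $D_{2L}$ on $G$ to be trivial. To obtain the left action, I substitute the identifications $\widetilde{e}_0 = \sum_i e_i^+$ and $\widetilde{e}_1 = \sum_i e_i^-$ into Lemma~\ref{lem:suzuki-cocycle-2}, which rewrites the coproduct as
\begin{equation*}
\Delta(u_x) = \sum_i u_x \otimes e_i^+ u_x + \sum_i u_{\overline{x}} \otimes e_i^- u_x.
\end{equation*}
Matching this against~(\ref{eq:extension-com}) simultaneously yields $a \triangleright x = x$, $b \triangleright x = \overline{x}$, and $\tau \equiv 1$. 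The remaining matched-pair axioms reduce to the fact that $x \mapsto \overline{x}$ is an involutive group automorphism of $D_{2L}$ (it interchanges $s_0$ and $s_1$), which is immediate.

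Second, I would compute $\sigma$ from Lemma~\ref{lem:suzuki-cocycle-1}. Comparing $u_x u_y = \widetilde{\sigma}(x, y) u_{xy}$ with~(\ref{eq:extension-mul}) identifies $\sigma(g; x, y)$ with the value at $g$ of $\widetilde{\sigma}(x, y)$ under $\mathbb{C}G^\vee \cong \mathbb{C}^G$. I would then expand $(\widetilde{e}_0 + \eta^{2\ell(y)} \widetilde{e}_1) \cdot h_+^{[\ell(y)\text{ odd}]}$ in the basis $\{e_i^\pm\}$, using $h_+ = \sum_i \zeta_N^i(e_i^+ + e_i^-)$ and the orthogonality of the $e_i^\pm$, via a four-way case split on the parities of $\ell(x)$ and $\ell(y)$. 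The stated formula for $\sigma(a^i b^j; x, y)$ emerges directly.

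The principal obstacle is the bookkeeping in this final case split: one must carefully distinguish the index $j \in \{0, 1\}$ (the $b$-component of a character) from the length $\ell(y)$ in $D_{2L}$, and correctly nest the Iverson-bracket exponents appearing in $\widetilde{\sigma}$. All other manipulations reduce to routine character-value computations using the preparatory formulas immediately preceding the lemma.
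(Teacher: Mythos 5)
Your overall strategy coincides with the paper's: the lemma is introduced there by the single sentence ``Comparing Lemma~\ref{lem:suzuki-cocycle-1} and Lemma~\ref{lem:suzuki-cocycle-2} with~(\ref{eq:extension-mul}) and~(\ref{eq:extension-com}), we have the following,'' and your first half (centrality of $h_\pm$ forces the trivial right action of $D_{2L}$ on $G$; reading Lemma~\ref{lem:suzuki-cocycle-2} against~(\ref{eq:extension-com}) via $\widetilde{e}_0 = \sum_i e_i^+$, $\widetilde{e}_1 = \sum_i e_i^-$ gives $a \triangleright x = x$, $b \triangleright x = \overline{x}$ and $\tau \equiv 1$; the matched-pair axioms reduce to $x \mapsto \overline{x}$ being an involutive automorphism) is exactly what is intended and is fine.

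The problem is in the second half, precisely at the point you dismiss as ``emerging directly.'' The only nontrivial evaluation in your four-way case split is $\langle a^i b^j, h_+ \rangle$, which enters through the factor $h_+^{[\text{$\ell(y)$ odd}]}$ of $\widetilde{\sigma}(x,y)$. By the definitions preceding the lemma, $\langle b, h_+ \rangle = +1$ and $h_+ = \sum_k \zeta_N^k (e_k^+ + e_k^-)$, so $\langle a^i b^j, h_+ \rangle = \zeta_N^i$ with no dependence on $j$; hence $(\widetilde{e}_0 + \eta^{2\ell(y)} \widetilde{e}_1) h_+ = \sum_k \zeta_N^k (e_k^+ + \eta^{2\ell(y)} e_k^-)$ by orthogonality, and the coefficient of $e_i^-$ is $+\eta^{2\ell(y)} \zeta_N^i$. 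Your procedure therefore yields
\begin{equation*}
  \sigma(a^i b^j; x, y)
  = \bigl\{ \eta^{2 \ell(y) [\text{$j$ odd}]} \cdot (\zeta_N^i)^{[\text{$\ell(y)$ odd}]} \bigr\}^{[\text{$\ell(x)$ odd}]},
\end{equation*}
which differs from the stated formula by $(-1)^{j [\text{$\ell(x)$ odd}][\text{$\ell(y)$ odd}]}$: the factor $(-1)^j \zeta_N^i$ in the statement is $\langle a^i b^j, h_- \rangle$, not $\langle a^i b^j, h_+ \rangle$, while Lemma~\ref{lem:suzuki-cocycle-1} genuinely involves $h_+$ (check $u_s^2 = \widetilde{e}_0 x_{00}^2 + \eta^2 \widetilde{e}_1 x_{10}^2 = (\widetilde{e}_0 + \eta^2 \widetilde{e}_1) h_+$ directly from the relations $x_{00} x_{01} = 0$, $x_{10}^2 = x_{01}^2$). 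So ``emerges directly'' skips the one step where the argument can fail: you must either locate the source of the extra sign, or prove that the cocycle you actually obtain and the one in the statement determine the same extension (they differ by $(-1)^{j\epsilon(x)\epsilon(y)}$ for the parity homomorphism $\epsilon = [\ell(\cdot)\ \text{odd}]$, and trivializing this perturbs $\tau$, so this is not automatic). The discrepancy is not cosmetic downstream either: it is exactly what produces the factor $(-1)^{[b_2(n)=1]}$ in $\varepsilon'_{NL}(n)$ in Theorem~\ref{thm:suzuki-FS-ind-noncyc}.
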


Let $\Gamma'_{NL} = G \bicross D_{2L}$ and $\omega \in Z^3(\Gamma'_{N L}, \mathbb{C}^\times)$ the normalized 3-cocycle associated with $A_{N L}^{+\beta}$. $G$ and $D_{2L}$ are regarded as subgroups of $\Gamma'_{N L}$. $\Gamma'_{N L}$ is decomposed into a direct sum of $\langle a \rangle$ and $\langle b, r, s \rangle$. The former subgroup is a cyclic group of order $N$, and the latter is isomorphic to $\Gamma_{1 L}$.

\begin{remark}
  \label{rem:suzuki-c-2}
  $c(\omega) \le 2$. This follows from Corollary~\ref{lem:GT-ind-estim}, since $\langle a, b, r \rangle$ is a subgroup of index 2 such that the restriction of $\omega$ to this subgroup is trivial.
\end{remark}

\begin{theorem}
  \label{thm:suzuki-FS-ind-noncyc}
  The $n$-th Frobenius-Schur indicator of the regular representation of $A = A_{N L}^{+\beta}$ with $N$ even is given by
  \begin{align*}
    \nu_n(A) & = \gcd(N, n) \cdot \gcd(L, n) \\
    & + [b_2(n) -1 \ge 0] \cdot L \gcd(N, n) \\
    & + [b_2(n) -1 \ge b_2(N)] \cdot L \gcd(N, n) \\
    & + [b_2(n) -1 \ge b_2(N), b_2(L)] \cdot \varepsilon_{N L}'(n) \gcd(N, n) \gcd(L, n)
  \end{align*}
  where
  \begin{equation*}
    \varepsilon_{N L}'(n) = (-1)^{[b_2(n) = 1]} \beta^{[b_2(n) - 1 = b_2(L)]}.
  \end{equation*}
\end{theorem}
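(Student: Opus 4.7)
The plan is to follow the strategy of Theorem~\ref{thm:suzuki-FS-ind-cyc}: apply Corollary~\ref{cor:FS-extension-1} to write $\nu_n(A)=\sum_{g\in\Gamma'_{NL}}\widetilde{\omega}_n(g)$ with $\widetilde{\omega}_n$ as in~(\ref{eq:omega-n}), pick a convenient subgroup on which $\omega$ is trivial, and partition $\Gamma'_{NL}$ into cosets. I would take $H:=\langle a,r\rangle\cong\mathbb{Z}_N\times\mathbb{Z}_L$; this lies inside the index-two subgroup $\langle a,b,r\rangle$ of Remark~\ref{rem:suzuki-c-2} on which $\omega$ is trivial, so $\omega|_{H\times H\times H}\equiv 1$. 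With $\Gamma'_{NL}=H\sqcup bH\sqcup sH\sqcup bsH$ one obtains
\begin{equation*}
\nu_n(A)=W_n(H)+W_n(bH)+W_n(sH)+W_n(bsH)
\end{equation*}
in the notation of~(\ref{eq:GT-partial-sum}).

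A brief semidirect-product computation yields $g^2\in\langle a\rangle$ for $g\in bH\cup sH$, and $g^2=(r^{2j+1},a^{2i})$ for $g=(r^js,a^ib)\in bsH$. Combined with $\tau=1$, which reduces $\omega(g_1,g_2,g_3)$ to $\sigma((g_1)_{|G};(g_2)_{|F},(g_2)_{|G}\triangleright(g_3)_{|F})$, and with the identity $\ell(\bar x)=2L-\ell(x)$, this makes the relevant parities coset-invariant. For odd $n$ only $H$ contributes, giving $\gcd(N,n)\gcd(L,n)$. For $n=2k$ the three easier cosets proceed as in the cyclic case: on $bH$, $(g_2)_{|F}\in\langle r\rangle$ has even length, so $\omega\equiv 1$ and $W_n(bH)=[b_2(n)\ge1]\,L\gcd(N,n)$; on $sH$ a direct computation gives $\omega(g,g^m,g)=\zeta_N^i$ for $m$ odd and $=1$ for $m$ even, whence $\widetilde{\omega}_n(g)=\zeta_N^{ik}$ on the order-$n$ locus and the standard geometric-series criterion yields $W_n(sH)=[b_2(n)-1\ge b_2(N)]\,L\gcd(N,n)$.

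The main obstacle is $W_n(bsH)$. Here for odd $m$ one computes $\omega(g,g^m,g)=-\zeta_N^i\,\eta^{-2(2j+1)}$ (using $\eta^{4L}=\beta^2=1$ to simplify), so $\widetilde{\omega}_n(g)=(-1)^k\zeta_N^{ik}\eta^{-2k(2j+1)}$ on the order-$n$ locus, defined by $ni\equiv 0\pmod N$ and $k(2j+1)\equiv 0\pmod L$. The $i$-sum produces $\gcd(N,n)$ exactly when $b_2(n)-1\ge b_2(N)$, and the $j$-equation is solvable iff $L/e$ is odd, i.e.\ $b_2(n)-1\ge b_2(L)$, where $e=\gcd(L,k)=\gcd(L,n)$ under that assumption. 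The critical observation is that the apparently $j$-dependent factor $\eta^{-2k(2j+1)}$ is in fact constant on the solution set: shifting $j\mapsto j+L/e$ multiplies it by $\beta^{-n/e}$, and $n/e$ is automatically even since its $2$-adic valuation equals $b_2(n)-b_2(L)\ge 1$. Evaluating at $j_0=(L/e-1)/2$ gives $\eta^{-2kL/e}=\beta^{-n/(2e)}=\beta^{[b_2(n)-1=b_2(L)]}$, and combining with $(-1)^k=(-1)^{[b_2(n)=1]}$ and the cardinality $\gcd(N,n)\gcd(L,n)$ of the solution set reproduces $\varepsilon'_{NL}(n)\gcd(N,n)\gcd(L,n)$. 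Summing the four contributions yields the formula; the entire delicate bookkeeping is confined to this last step, where the simultaneous control of the roots of unity $\eta$, $\beta$, $\zeta_N$ and the number-theoretic collapse of the $j$-sum is essential.
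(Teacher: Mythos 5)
Your proposal is correct and follows essentially the same route as the paper's proof: the same subgroup $H=\langle a,r\rangle$, the same coset decomposition $\nu_n(A)=W_n(H)+W_n(bH)+W_n(sH)+W_n(bsH)$, the same evaluation $\widetilde{\omega}_n(g)=\delta_{g^n,1}(-\eta^{-2(2j+1)}\zeta_N^i)^k$ on $bsH$, and the same reduction of the $i$- and $j$-sums to the conditions $[b_2(n)-1\ge b_2(N)]$ and $[b_2(n)-1\ge b_2(L)]$. The only cosmetic difference is that you argue the constancy of $\eta^{-2k(2j+1)}$ on the solution set by a shift-invariance argument before evaluating at $j_0$, whereas the paper parametrizes the solutions directly and computes $\eta^{-2k(2j+1)}=\beta^{k\sslash L}$; these are the same calculation.
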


\begin{proof}
  The proof is very similar to that of Theorem~\ref{thm:suzuki-FS-ind-cyc}. $W_n(X)$ and $r \sslash s$ have the same meanings as in the proof of Theorem~\ref{thm:suzuki-FS-ind-cyc}. Let $H$ be the subgroup of $\Gamma'_{NL}$ generated by $a$ and $r$. Then we have
  \begin{equation*}
    \nu_n(A) = W_n(H) + W_n(b H) + W_n(s H) + W_n(b s H).
  \end{equation*}

  Suppose that $n$ is odd. Then in a similar way as the proof of Theorem~\ref{thm:suzuki-FS-ind-cyc},
  \begin{equation*}
    \nu_n(A) = \# \{ g \in H \mid g^n = 1 \} = \gcd(N, n) \gcd(L, n).
  \end{equation*}

  Suppose that $n = 2k$ is even. Then $W_n(H) = \gcd(N, n) \gcd(L, n)$. We compute (i) $W_n(b H)$, (ii) $W_n(s H)$ and (iii) $W_n(b s H)$ in what follows.

  (i) Since $\omega|_{b H \times b H \times bH} \equiv 1$, we have $W_n(b H) = \# \{ g \in b H \mid g^n = 1 \}$. Let $g \in b H$. Then $g = a^i b r^j $ for some $i, j$. $g^n = 1$ if and only if $2 i k \equiv 0 \pmod{N}$. The number of solutions of this congruence equation is $\gcd(N, 2k) = \gcd(N, n)$. Summarizing, we have $W_n(b H) = L \gcd(N, n)$.

  (ii) Let $g \in s H$. Then $g = a^i r^j s$ for some $i, j$. We have that $\widetilde{\omega}_n(g) = \delta_{g^n, 1} \cdot \zeta_N^{i k}$. In a similar way as (ii) of the proof of Theorem~\ref{thm:suzuki-FS-ind-cyc},
  \begin{equation*}
    W_n(s H) = [\text{$N \sslash k$ odd}] \cdot L \gcd(N, k) = [b_2(n) - 1 \ge b_2(N)] \cdot L \gcd(N, n).
  \end{equation*}

  (iii) Let $g \in b s H$. Then $g = a^i b r^j s$ for some $i, j$. We have that $\widetilde{\omega}_n(g) = \delta_{g^n, 1} \cdot (-\eta^{-2(2j+1)} \zeta_N^{i})^k$ and that $g^n = 1$ if and only if both of the following holds:
  \begin{align}
    \label{eq:suzuki-cong-3} 2 i k & \equiv 0 \pmod{N}, \\
    \label{eq:suzuki-cong-4} (2j + 1) k & \equiv 0 \pmod{L}.
  \end{align}

  The latter congruence equation has no solutions if $L \sslash k$ is even. Suppose that $L \sslash k = 2 j_0 + 1$ is odd. Then the solutions of~(\ref{eq:suzuki-cong-4}) are $j = j_0 + (L \sslash k) \cdot m$ ($0 \le m < \gcd(L, k)$). For these $j$, $2k(2j + 1) \equiv (k \sslash L) \cdot 2L \pmod{4L}$. Since $L \sslash k$ is odd,
  \begin{equation*}
    \eta^{-2k(2j + 1)} = \beta^{k \sslash L} = \beta^{[b_2(n) = b_2(L) + 1]}.
  \end{equation*}

  Now we compute $W_n(b H s)$ as follows:
  \begin{equation*}
    W_n(b H s) = [b_2(n) - 1 \ge b_2(L)] \cdot \varepsilon'_{N L}(n) \gcd(L, n) \sum_{i} \zeta_N^{i k}
  \end{equation*}
  where $i$ runs through the solutions of~(\ref{eq:suzuki-cong-3}). In a similar way as (ii) of the proof of Theorem~\ref{thm:suzuki-FS-ind-cyc}, we have that the sum is equal to $[b_2(n) - 1 \ge b_2(N)] \gcd(N, n)$. Therefore,
  \begin{equation*}
    W_n(b H s) = [b_2(n) - 1 \ge b_2(L), b_2(N)] \cdot \varepsilon'_{N L}(n) \gcd(L, n) \gcd(N, n).
  \end{equation*}

  Combining (i), (ii) and (iii), we have the result.
\end{proof}

\section{Frobenius theorem}
\label{sec:frobenius}

We conclude this paper by raising a question motivated by a theorem of Frobenius on the number of solutions of the equation $x^n = 1$ in finite groups. For a finite group $G$ and a positive integer $n$, let $G[n] = \{ g \in G \mid g^n = 1 \}$. In 1895, Frobenius proved the following theorem:

\begin{theorem}[Frobenius]
  \label{thm:Frobenius-org}
  If $n$ divides $|G|$, then $n$ divides $|G[n]|$.
\end{theorem}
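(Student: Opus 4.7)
The plan is to give the classical proof by strong induction on $n$, which is legitimate because any proper divisor $d$ of $n$ again divides $|G|$, so the inductive hypothesis is available at every reduction. The base case $n = 1$ is immediate, and the extremal case $n = |G|$ follows from Lagrange's theorem, which gives $G[n] = G$ and hence $|G[n]| = n$.

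For the inductive step, I would partition $G[n]$ according to the exact order of each element,
$$|G[n]| = \sum_{d \mid n} E(d), \qquad E(d) = \#\{g \in G : \ord(g) = d\},$$
so that by the inductive hypothesis $d$ divides $\sum_{d' \mid d} E(d') = |G[d]|$ for every proper divisor $d$ of $n$. M\"obius inversion then expresses each $E(d)$ as an integer linear combination of the numbers $|G[d']|$, and the remaining task is to assemble these congruences into the divisibility $n \mid |G[n]|$. A somewhat cleaner reformulation counts cyclic subgroups: writing $c_d$ for the number of cyclic subgroups of $G$ of order $d$, one has $E(d) = \phi(d) c_d$, reducing matters to a combinatorial identity among the $c_d$ that can be checked directly on each cyclic subgroup and then summed.

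The main obstacle is precisely this combinatorial identity, which must take all the lower-order contributions, each known only modulo its own divisor, and combine them into a single multiple of $n$. In Frobenius's original 1895 argument this is handled by a delicate double-counting over pairs $(g, C)$ with $g \in C \le G$ and $C$ cyclic, together with a careful bookkeeping of cyclic subgroup counts; the combinatorics is elementary but intricate, and most modern textbook accounts organise it via induction on $|G|$ using the fact that the statement is trivial for cyclic groups and then reducing the general case to a Sylow-type argument. Since the present paper invokes Theorem~\ref{thm:Frobenius-org} only as motivation for the Hopf-algebraic analogue formulated in Section~\ref{sec:frobenius}, it would be natural simply to quote the classical result rather than reproduce the combinatorial details.
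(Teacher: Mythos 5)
The paper does not prove this theorem at all: it states it as the classical result of Frobenius and refers the reader to the literature (the Isaacs--Robinson article \cite{MR1157226}), then uses it as a black box, for instance inside the proof of Theorem~\ref{thm:Frobenius-1}. So your closing remark --- that one would naturally just quote the classical result rather than reproduce the combinatorics --- is exactly what the paper does, and to that extent your proposal is consistent with the source.

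As a proof, however, your sketch has a genuine gap, and it sits exactly where you place it. Knowing that $d$ divides $|G[d]|$ for every proper divisor $d$ of $n$, together with the M\"obius-inversion identity $E(d) = \sum_{d' \mid d} \mu(d/d')\,|G[d']|$ and the count $E(d) = \phi(d) c_d$, is purely arithmetic bookkeeping and cannot by itself yield $n \mid |G[n]|$: the congruences for the proper divisors place no constraint on $E(n)$, so $|G[n]| = E(n) + \sum_{d \mid n,\, d < n} E(d)$ remains undetermined modulo $n$. The theorem genuinely requires group-theoretic input beyond induction on $n$; the standard arguments induct on $|G|$ as well and use Sylow theory, the class equation, or the decomposition of each $g \in G[n]$ into commuting $p$- and $p'$-parts --- precisely the device the paper itself deploys in the proof of Theorem~\ref{thm:Frobenius-1}, where the classical Theorem~\ref{thm:Frobenius-org} is applied to the centralizers $C_\Gamma(x_i)$. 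Until that input is supplied, the ``combinatorial identity'' you defer is not a detail but the whole content of the theorem.
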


There are numerous proofs of this theorem, see, e.g., \cite{MR1157226}. Now recall that $\nu_n(\mathbb{C}G) = |G[n]|$. By using Frobenius-Schur indicators, we formulate the theorem of Frobenius for pivotal fusion categories as follows:

\begin{definition}
  \label{def:frobenius}
  Let $\mathcal{C}$ be a pivotal fusion category such that $\dim(\mathcal{C})$ is a positive integer. We say that {\em Frobenius theorem holds for $\mathcal{C}$} if $\nu_n(\mathcal{C})$ is divisible by $n$ (in the ring of algebraic integers) for every divisor $n$ of $\dim(\mathcal{C})$.
\end{definition}

Note that $\dim \Rep(H) = \dim_{\mathbb{C}}(H)$ for a semisimple quasi-Hopf algebra $H$. We say that Frobenius theorem holds for $H$ if it holds for $\Rep(H)$. Theorem~\ref{thm:Frobenius-org} claims that, in our terms, ``Frobenius theorem holds for every finite group algebra''. Our question is the following:

\begin{question}
  Does Frobenius theorem hold for every semisimple Hopf algebra?
\end{question}

There exists a semisimple quasi-Hopf algebra for which Frobenius theorem does not hold: Let $\Gamma = \mathbb{Z}_p$ with $p$ an odd prime with $p \equiv 1 \pmod{4}$ and $\omega = \psi_p^{}$ the 3-cocycle given by~(\ref{eq:cohomology}) with $N = p$. Then Frobenius theorem does not hold for the quasi-Hopf algebra $H = \mathbb{C}^{\Gamma}_{\omega}$. In fact, since $\Rep(H) \approx \Vect_{\omega}^{\Gamma}$, we have $\nu_p(H) = \sqrt{p}$ by results of Section~\ref{sec:group-theoretical}. $p$ does not divides $\nu_p(H)$ while $p$ does $\dim_{\mathbb{C}}(H) = p$.

In the rest of this section, we give some partial results on Frobenius theorem for semisimple Hopf algebras, especially group-theoretical ones. The following is obvious by Theorem~\ref{thm:FS-Hopf} (f).

\begin{theorem}
  Frobenius theorem holds for a semisimple Hopf algebra $H$ if and only if it holds for the dual Hopf algebra $H^*$.
\end{theorem}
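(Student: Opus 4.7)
The plan is to invoke Theorem~\ref{thm:FS-Hopf}(f) directly. That part of the theorem asserts $\nu_n(H^*) = \nu_n(H)$ for every positive integer $n$, so the sequences of Frobenius–Schur indicators of $H$ and of $H^*$ are literally identical.

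Next I would observe that the underlying vector spaces of $H$ and $H^*$ have the same dimension, so $\dim(\Rep(H)) = \dim_{\mathbb{C}}(H) = \dim_{\mathbb{C}}(H^*) = \dim(\Rep(H^*))$. Consequently the set of positive divisors $n$ of $\dim_{\mathbb{C}}(H)$ over which Definition~\ref{def:frobenius} quantifies coincides with the corresponding set for $H^*$.

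Combining these two observations, the condition ``$n \mid \nu_n(H)$ in the ring of algebraic integers for every divisor $n$ of $\dim_{\mathbb{C}}(H)$'' is, term by term, the same statement as ``$n \mid \nu_n(H^*)$ for every divisor $n$ of $\dim_{\mathbb{C}}(H^*)$''. Hence Frobenius theorem holds for $H$ if and only if it holds for $H^*$.

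There is really no obstacle: the content is entirely packaged in Theorem~\ref{thm:FS-Hopf}(f), and the argument is a one-line reduction. The only thing to remark on, if desired, is that part (f) of that theorem is a consequence of part (a) together with the well-known equivalence $\Rep(D(H)) \simeq \Rep(D(H^*))$ of braided monoidal categories, so the present result could alternatively be phrased as a corollary of the monoidal Morita invariance discussed after Theorem~\ref{thm:FS-Hopf}.
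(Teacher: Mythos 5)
Your proof is correct and follows exactly the paper's own route: the paper simply remarks that the statement is obvious from Theorem~\ref{thm:FS-Hopf}(f), which gives $\nu_n(H^*)=\nu_n(H)$ for all $n$, and your additional observation that $\dim_{\mathbb{C}}(H)=\dim_{\mathbb{C}}(H^*)$ just makes explicit what the paper leaves implicit. Nothing to add.
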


By results of Section~\ref{sec:group-theoretical} and Section~\ref{sec:GT-example}, we have the following:

\begin{theorem}
  \label{thm:Frobenius-hopf}
  Let $H$ be a semisimple Hopf algebra. Frobenius theorem holds for $H$ if one of the following holds:
  \begin{enumalph}
  \item $H$ is isomorphic to a bismash product $\mathbb{C}^G \# \mathbb{C}F$.
  \item $H$ is isomorphic to $H_{2N^2}(\xi)$.
  \item $H$ is isomorphic to $H_{N^3}(\xi, \zeta)$.
  \item $H$ is isomorphic to the Suzuki's Hopf algebra $A_{N L}^{\alpha\beta}$.
  \end{enumalph}
\end{theorem}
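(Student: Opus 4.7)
The strategy is to apply the explicit formulas for $\nu_n(H)$ established in Section~\ref{sec:GT-example} and, for each divisor $n$ of $\dim(H)$, verify $n \mid \nu_n(H)$ by a direct $p$-adic valuation analysis. The basic arithmetic lemma used throughout is: \emph{if $n \mid m^k$ and $d = \gcd(m, n)$, then $n \mid d^k$}, since $n/d$ and $m/d$ are coprime and hence $n/d \mid d^{k-1}$.

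Part (a) is immediate: by Corollary~\ref{cor:FS-extension-2}, $\nu_n(\mathbb{C}^G \# \mathbb{C}F) = \#\{x \in F \bicross G \mid x^n = 1\}$ and $\dim(\mathbb{C}^G \# \mathbb{C}F) = |F \bicross G|$, so the classical Frobenius theorem (Theorem~\ref{thm:Frobenius-org}) applied to $F \bicross G$ yields the conclusion.

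For parts (b), (c), (d) one fixes $n \mid \dim(H)$ and applies, respectively, Theorem~\ref{thm:GT-example}, Theorem~\ref{thm:FS-ind-N3}, and Theorems~\ref{thm:suzuki-FS-ind-cyc}--\ref{thm:suzuki-FS-ind-noncyc}. Each $\nu_n(H)$ is a sum of a \emph{generic} term ($\gcd(N,n)^2$, $\gcd(N,n)^3/\ord(\alpha)$, or $\gcd(N,n)\gcd(L,n)$) plus \emph{exceptional} summands governed by the Iverson-bracket conditions appearing in the formulas, namely \eqref{eq:GT-ex-1-cond}, \eqref{eq:GT-ex-2-cond}, and the various $b_2$-inequalities of the Suzuki formulas. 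The generic term is disposed of by the lemma above, using $n \mid 2N^2$, $n \mid N^3$, and $n \mid 4NL$; for (c) one additionally checks that $\ord(\alpha) \mid \gcd(N,n)$ (from $\alpha^n = \xi^{Nn^2/\gcd(N,n)^2} = 1$ together with $\ord(\alpha) \mid \ord(\xi) \mid N$) and, sharpening the estimate on $v_p(\ord(\alpha))$ when $v_p(n) > v_p(N)$, that $n\,\ord(\alpha) \mid \gcd(N,n)^3$. When an Iverson-bracket condition is active, it pins down the $p$-adic valuations of $N$, $L$, $n$, $\ord(\xi)$, $\ord(\zeta)$ tightly enough that the exceptional summand combines with the generic one to give a multiple of $n$, by a prime-by-prime $v_p$ comparison.

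The subtlest subcase is part (c), where $\nu_n(H)$ lies a priori in $\mathbb{Z}[\beta]$ with $\beta$ a primitive cube root of unity and the exceptional formula carries the coefficient $1/(9\,\ord(\alpha))$ (respectively $1/(3\,\ord(\alpha))$ when $b_3(\ord(\xi)) = 1$). The plan is first to verify that under \eqref{eq:GT-ex-2-cond} this coefficient times $\gcd(N,n)^3$ is a rational integer, so that $\nu_n(H) \in \mathbb{Z}[\beta]$; then, expanding in the $\mathbb{Z}$-basis $\{1, \beta\}$, the divisibility $n \mid \nu_n(H)$ reduces to a single rational-integer divisibility that follows from a $3$-adic inequality using $b_3(\gcd(N,n)) = b_3(n) \ge 1$. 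A parallel but strictly $2$-adic bookkeeping handles the exceptional terms of (b) and of the Suzuki formulas in (d). The principal obstacle is thus not conceptual but the uniform bookkeeping of $2$- and $3$-adic valuations across the numerous Iverson-bracket branches; none of the individual checks is deep, but their systematic combination must be carried out with care.
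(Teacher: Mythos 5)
Your part (a) matches the paper's: both reduce to the classical Frobenius theorem via Corollary~\ref{cor:FS-extension-2}. For (b) and (d), however, you take the route the paper explicitly calls ``cumbersome'' and deliberately sidesteps, namely a branch-by-branch $2$-adic verification from the explicit formulas of Section~\ref{sec:GT-example}. The paper instead proves a general divisibility result, Theorem~\ref{thm:Frobenius-1}: if $n$ divides $|\Gamma|$ and $\gcd(n, c(\omega)) = 2$, then $n$ divides $\nu_n(\mathcal{C})$; its proof adapts the Isaacs--Robinson argument for the classical Frobenius theorem, splitting each $g \in \Gamma[n]$ into its $p$-part and $p'$-part, using that $\widetilde{\omega}_n$ is a class function depending only on the $p$-part, and exploiting the vanishing of the resulting Gauss sums when $p = 2$. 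Combined with the estimate $c(\omega) \le 2$ --- obtained from the Kac exact sequence for $H_{2N^2}(\xi)$ (Corollary~\ref{cor:Frobenius-3}) and from the index-two subgroups exhibited in Remarks~\ref{rem:suzuki-c-1} and~\ref{rem:suzuki-c-2} for $A_{NL}^{\alpha\beta}$ --- this settles (b) and (d) with no Iverson-bracket bookkeeping and applies uniformly to every group-theoretical category with $c(\omega) = 2$. What your route buys is that it also covers (c), which the general theorem cannot: for $H_{N^3}(\xi,\zeta)$ the quantity $\gcd(n, c(\omega))$ can be the odd prime $3$, and Theorem~\ref{thm:Frobenius-1} then yields only divisibility by $n/\sqrt{3}$. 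For (c) the paper offers no more detail than you do, so your reduction --- integrality of $\gcd(N,n)^3/(9\ord(\alpha))$, expansion of $\nu_n(H)$ in the $\mathbb{Z}$-basis $\{1, \beta\}$, and $\gcd(4,5) = 1$ to collapse the divisibility to a single rational-integer statement --- is sound and in fact more explicit than anything written in the paper. Two cautions on carrying out your plan: your ``basic arithmetic lemma'' disposes only of the generic terms, and in several branches (for instance $N$ odd with $n \equiv 2 \pmod{4}$ in Theorem~\ref{thm:GT-example}, or the extremal case $b_2(n) = 2 b_2(N) + 1$) each individual summand has $2$-adic valuation exactly $b_2(n) - 1$ and only the sum is divisible by $n$, so the valuation comparison must genuinely be performed on the combined expression, as you anticipate; likewise the Suzuki formulas contain signed terms that partially cancel, so no term-by-term bound can succeed there either.
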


(a) is a direct consequence of the Frobenius theorem for finite groups (see Corollary~\ref{cor:FS-extension-2}). Even if we know explicit values of indicators, (b), (c) and (d) of the above theorem are cumbersome to check. In the rest of this section, we show a more general result that covers both (b) and (d). Let $\mathcal{C} = \mathcal{C}(\Gamma, \omega, F, \alpha)$ be a fixed group-theoretical category, till the end of this section.

\begin{theorem}
  \label{thm:Frobenius-1}
  Let $n$ be a divisor of $|\Gamma|$. Suppose that $p = \gcd(n, c(\omega))$ is a prime.
  \begin{enumalph}
  \item If $p = 2$, $\nu_n(\mathcal{C})$ is divisible by $n$.
  \item If $p$ is odd, $\nu_n(\mathcal{C})$ is divisible by $\frac{n}{\sqrt{p}}$.
  \end{enumalph}
\end{theorem}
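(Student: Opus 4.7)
The approach is to evaluate $\nu_n(\mathcal{C})$ directly via Theorem~\ref{thm:FS-GT-1} and then compare against the classical Frobenius theorem. I partition the sum by the cyclic subgroup each element generates:
\[
\nu_n(\mathcal{C}) = \sum_{C} U(C), \qquad U(C) := \sum_{g \in G(C)} \widetilde{\omega}_n(g),
\]
where $C$ ranges over cyclic subgroups of $\Gamma$ with $|C|$ dividing $n$ and $G(C)$ is the set of generators of $C$. Let $c_C$ be the cohomological order of $\omega|_C$. Since $c_C$ divides both $|C|$ and $c(\omega)$, and $|C|$ divides $n$, we get $c_C \mid \gcd(n, c(\omega)) = p$, and primality forces $c_C \in \{1,p\}$.

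For $c_C = 1$, plainly $U(C) = \phi(|C|)$. For $c_C = p$, fix a generator $g_0 \in C$ and set $\zeta = \widetilde{\omega}_n(g_0)$, a primitive $p$-th root of unity. Lemma~\ref{lem:omega-n}(b) gives $\widetilde{\omega}_n(g_0^a) = \zeta^{a^2}$. Because $p \mid |C|$, the reduction $(\mathbb{Z}/|C|)^\times \twoheadrightarrow (\mathbb{Z}/p)^\times$ is surjective with uniform fibers of size $\phi(|C|)/(p-1)$, so
\[
U(C) = \frac{\phi(|C|)}{p-1}\bigl(S(t_C, p) - 1\bigr)
\]
for some $t_C$ coprime to $p$. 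Lemma~\ref{lem:quadratic-sum} then evaluates the Gauss sum: $S(1,2) = 0$ gives $U(C) = -\phi(|C|)$ when $p = 2$, while $S(t_C,p) = \epsilon_C\sqrt{p^*}$ with $\epsilon_C \in \{\pm 1\}$ and $p^* = (-1)^{(p-1)/2}p$ when $p$ is odd. Note that $p \mid |C|$ implies $(p-1) \mid \phi(|C|)$, so the prefactor $\phi(|C|)/(p-1)$ is always an integer.

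Summing, set $X = \{g \in \Gamma : g^n = 1\}$, $|Y| = \sum_{c_C = p}\phi(|C|)$, and $\Delta = \sum_{c_C = p} \epsilon_C \phi(|C|)$. Then
\[
\nu_n(\mathcal{C}) = \begin{cases} |X| - 2|Y| & (p = 2), \\[2pt] |X| - \dfrac{p\,|Y|}{p-1} + \dfrac{\sqrt{p^*}\,\Delta}{p-1} & (p\text{ odd}). \end{cases}
\]
Classical Frobenius (Theorem~\ref{thm:Frobenius-org}) gives $n \mid |X|$. Part~(a) thus reduces to proving $(n/2) \mid |Y|$, while part~(b), after multiplying by $\sqrt{p}/n$ and separating the rational and $\sqrt{p}$-components of $\sqrt{p}\,\nu_n(\mathcal{C})/n$, reduces to the two integrality statements $(n/p) \mid |Y|/(p-1)$ and $(n/p) \mid \Delta/(p-1)$.

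The main obstacle is establishing these refined divisibilities, which do not follow formally from $n \mid |X|$ alone. My plan is to apply Frobenius' theorem to judiciously chosen subsets of $\Gamma$ rather than to $X$ itself. Using the class-function property of $\widetilde{\omega}_n$ (Remark~\ref{rem:fs-ind-gt}(iii)) together with its invariance under $g \mapsto g^a$ for $\gcd(a,\ord g) = 1$, the sets counted by $|Y|$ and $\Delta$ are unions of complete $G(C)$'s stratified by $\Gamma$-conjugacy orbits of cyclic subgroups. In the spirit of Lemma~\ref{lem:GT-ind-estim}, I would locate a normal subgroup $H \trianglelefteq \Gamma$ on which the $p$-part of $\omega$ trivializes and reduce the count to a Frobenius-type problem on the quotient $\Gamma/H$, whose exponent carries the relevant factor of $n/p$. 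For odd $p$, the sign classes $\epsilon_C = \pm 1$ require additional bookkeeping, but the Galois involution $\sqrt{p^*} \mapsto -\sqrt{p^*}$ should pair these two classes symmetrically modulo $n$. This last step is where I expect most of the technical work to concentrate.
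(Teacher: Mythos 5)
Your setup is sound as far as it goes: the partition of $\Gamma[n]$ into the generator sets $G(C)$ of cyclic subgroups, the observation that the cohomological order of $\omega|_C$ must divide $p$, and the evaluation of each $U(C)$ through the quadratic Gauss sum $S(t_C,p)$ all check out, modulo one small slip --- when $c_C=p$ the value $\widetilde{\omega}_n(g_0)$ need not be a \emph{primitive} $p$-th root of unity (by the explicit formula in the proof of Lemma~\ref{lem:omega-n} it is trivial whenever $p$ divides $n/|C|$), so the correct dichotomy is by whether $\widetilde{\omega}_n$ is trivial on $G(C)$, not by the value of $c_C$. The real problem is that everything after your displayed formula for $\nu_n(\mathcal{C})$ is a declaration of intent rather than a proof: the whole content of the theorem has been pushed into the three unproven divisibilities $(n/2)\mid |Y|$, $(n/p)\mid |Y|/(p-1)$ and $(n/p)\mid \Delta/(p-1)$, and the two ideas you offer for them do not work as stated. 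A normal subgroup on which (the $p$-part of) $\omega$ trivializes need not exist --- Lemma~\ref{lem:GT-ind-estim} is a sufficient criterion, not a structure theorem --- and there is no Galois symmetry pairing the classes $\epsilon_C=+1$ and $\epsilon_C=-1$: in the example $\Gamma=\mathbb{Z}_p$, $\omega=\psi_p$ with $p\equiv 1\pmod 4$ one has $\nu_p(\mathcal{C})=\sqrt p$, so the unique nontrivial cyclic subgroup has $\epsilon_C=+1$ and nothing cancels it; the required divisibility holds there only because $n/p=1$.

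The paper closes exactly this gap by a different bookkeeping, modelled on the Isaacs--Robinson proof of Theorem~\ref{thm:Frobenius-org}. Write $n=p^kq$ with $q$ coprime to $p$ and decompose each $g\in\Gamma[n]$ as $g=xy$ with $x$ its $p$-part and $y$ its $p'$-part; Lemma~\ref{lem:omega-n}(b) shows $\widetilde{\omega}_n(g)=\widetilde{\omega}_n(x)$, whence $\nu_n(\mathcal{C})=\sum_{x\in\Gamma[p^k]}|C_\Gamma(x)[q]|\,\widetilde{\omega}_n(x)$. The factor $q$ is extracted by grouping the $x$ into conjugacy classes (legitimate because $\widetilde{\omega}_n$ is a class function) and applying the classical Frobenius theorem to each centralizer $C_\Gamma(x)$; the factor $p^{k-1}$ is extracted from the free action of $\mathbb{Z}_{p^k}^\times$ on the set $X$ of those $x$ with $\widetilde{\omega}_n(x)\ne 1$, all of which have order exactly $p^k$ because $x\in\Gamma[p^{k-1}]$ forces $\widetilde{\omega}_n(x)=\widetilde{\omega}_{n/p}(x)^p=1$; and the Gauss sum over $\mathbb{Z}/p$ supplies the final factor $p$ (for $p=2$) or $\sqrt p$ (for $p$ odd). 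To salvage your argument you would need to import precisely these two mechanisms --- conjugation plus Frobenius on centralizers for the $q$-part, and the power action of $\mathbb{Z}_{p^k}^\times$ for the $p$-part --- to establish your refined divisibilities; as written, the proposal stops just where the real work begins.
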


We point out that Isaacs and Robinson gave a nice and elementary proof of Theorem~\ref{thm:Frobenius-org} in \cite{MR1157226}. Our proof is based on their idea.

\begin{proof}
  By Theorem~\ref{thm:FS-GT-1}, $\nu_n(\mathcal{C}) = \sum_{g \in \Gamma[n]} \widetilde{\omega}_n(g)$ where $\widetilde{\omega}_n: \Gamma \to \mathbb{C}$ is the function given by~(\ref{eq:omega-n}). Let $g \in \Gamma$. There exists a unique decomposition $g = x \cdot y = y \cdot x$ such that the order of $x$ is a power of $p$ and the order of $y$ is coprime to $p$. $x$ and $y$ are called the $p$-part and $p'$-part of $g$, respectively. Write $n = p^k \cdot q$ with $q$ coprime to $p$. Then the assignment $g \mapsto (x, y)$ gives a bijection between $\Gamma[n]$ and
  \begin{equation*}
    \Phi := \{ (x, y) \mid x \in \Gamma[p^k], y \in C_\Gamma(x), y^q = 1 \}
  \end{equation*}
  where $C_\Gamma(x)$ is the centralizer of $x$ in $\Gamma$.

  Recall the construction of the $p$-part of $g \in \Gamma[n]$. By elementary number theory, there exist $a, b \in \mathbb{Z}$ such that $a p^k + b q = 1$. Then the $p$-part of $g$ is given by $x = g^{b q}$. Note that $b q \equiv 1 \pmod{p^k}$. By Lemma~\ref{lem:omega-n},
  \begin{equation*}
    \widetilde{\omega}_n(x) = \widetilde{\omega}_n(g^{b q}) = \widetilde{\omega}_n(g)^{b^2 q^2} = \widetilde{\omega}_n(g).
  \end{equation*}
  This means that $\widetilde{\omega}_n(g)$ depends only on the $p$-part of $g$.

  We first show that $\nu_n(\mathcal{C})$ is divisible by $q$. Note that $\Gamma[p^k]$ is invariant under conjugation. Let $x_1, \cdots, x_r$ be representatives of conjugacy classes contained in $\Gamma[p^k]$. Since $\widetilde{\omega}_n$ is a class function,
  \begin{equation*}
    \nu_n(\mathcal{C}) = \sum_{i = 1}^{r} \frac{|\Gamma|}{|C_\Gamma(x_i)|}
    \cdot |C_\Gamma(x_i)[q]| \cdot \widetilde{\omega}_n(x_i).
  \end{equation*}
  By Theorem~\ref{thm:Frobenius-org}, $|C_\Gamma(x_i)[q]| = m_i \cdot \gcd(|C_\Gamma(x_i)|, q)$ for some positive integer $m_i$. Every summand of the right-hand side of the above equation is divisible by $q$, since
  \begin{equation*}
    \frac{|\Gamma|}{|C_\Gamma(x_i)|} \cdot |C_\Gamma(x_i)[q]|
    = \frac{|\Gamma| \cdot m_i \gcd(|C_\Gamma(x_i)|, q)}{|C_\Gamma(x_i)|}
    = \frac{|\Gamma| \cdot m_i}{\lcm(|C_\Gamma(x_i)|, q)} \cdot q
  \end{equation*}
  where $\lcm(a, b)$ is the least common multiple of $a$ and $b$. This implies that $\nu_n(\mathcal{C})$ is divisible by $q$.

  Next, we argue the divisibility of $\nu_n(\mathcal{C})$ by powers of $p$. Let
  \begin{equation*}
    X = \{ g \in \Gamma[p^k] \mid \widetilde{\omega}_n(g) \ne 1 \} \quad \text{and} \quad
    \Phi' = \{ (x, y) \in \Phi | x \in X \}.
  \end{equation*}
  Then, we have
  \begin{equation}
    \label{eq:thm-frob-1}
    \nu_n(\mathcal{C}) = |\Phi \setminus \Phi'| + \sum_{x \in X} |C_\Gamma(x)[q]| \cdot \widetilde{\omega}_n(x).
  \end{equation}

  Note that every element of $X$ has order $p^k$. Indeed, if $x \in \Gamma[p^{k - 1}]$, $\widetilde{\omega}_{n/p}(x)$ is a $p$-th root of unity by Lemma~\ref{lem:omega-n}, and hence $\widetilde{\omega}_n(x) = \widetilde{\omega}_{n/p}(x)^p = 1$. For every $x \in X$ and $m \in \mathbb{Z}$ coprime to $p$, $x^m \in X$ by Lemma~\ref{lem:omega-n}. It follows from these observations that $M := \mathbb{Z}_{p^k}^\times$ acts freely on $X$ by $m \rightharpoonup x = x^m$. Let $x_1, \cdots, x_s$ be complete representatives of orbits of this action. Note that, if $x$ and $x'$ belong to the same orbit, $C_\Gamma(x) = C_\Gamma(x')$. Hence,
  \begin{equation*}
    |\Phi'| = \sum_{i = 1}^s |M| \cdot |C_\Gamma(x_i)[q]| = \sum_{i = 1}^s p^{k - 1} (p - 1) |C_\Gamma(x_i)[q]|.
  \end{equation*}
  By Lemma~\ref{lem:omega-n}, the second term of the right-hand side of~(\ref{eq:thm-frob-1}) is computed as follows:
  \begin{align*}
    \sum_{x \in X} |C_\Gamma(x)[q]| \cdot \widetilde{\omega}_n(x)
    & = \sum_{i = 1}^{s} \sum_{m \in M} |C_\Gamma(x_i)[q]| \cdot \widetilde{\omega}_n(x_i)^{m^2} \\
    & = \sum_{i = 1}^{s} \sum_{m = 1}^{p - 1} p^{k - 1} |C_\Gamma(x_i)[q]| \cdot \widetilde{\omega}_n(x_i)^{m^2}.
  \end{align*}
  Let $S_i = \sum_{m = 0}^{p - 1} \widetilde{\omega}_n(x_i)^{m^2}$. Combining the above equations, we have
  \begin{equation*}
    \nu_n(\mathcal{C}) = |\Gamma[n]|
    - p^k \sum_{i = 1}^{s} |C_\Gamma(x_i) [q]|
    + p^{k - 1} \sum_{i = 1}^{s} |C_\Gamma(x_i)[q]| \cdot S_i.
  \end{equation*}

  If $p = 2$, then each $S_i$ is zero. This implies that $\nu_n(\mathcal{C})$ is divisible by $2^k$. By the Chinese remainder theorem, $\nu_n(\mathcal{C})$ is divisible by $n = 2^k \cdot q$.

  If $p$ is odd, then by Lemma~\ref{lem:quadratic-sum}, each $S_i$ is divisible by $\sqrt{p}$. This implies that $\nu_n(\mathcal{C})$ is divisible by $p^{k - 1} \sqrt{p}$. Again by the Chinese remainder theorem, $\nu_n(\mathcal{C})$ is divisible by $q \cdot p^{k - 1} \sqrt{p} = n / \sqrt{p}$.
\end{proof}

As a direct consequence, we have the following corollaries.

\begin{corollary}
  \label{cor:Frobenius-2}
  Frobenius theorem holds for $\mathcal{C}$ if $c(\omega) = 2$.
\end{corollary}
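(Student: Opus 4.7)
The plan is to split on the parity of the divisor $n$ and reduce each case to an already-available result: Theorem~\ref{thm:Frobenius-1}(a) in the even case and the classical theorem of Frobenius (Theorem~\ref{thm:Frobenius-org}) in the odd case. Let $n$ be a divisor of $\dim(\mathcal{C})$; since a group-theoretical category $\mathcal{C}(\Gamma,\omega,F,\alpha)$ is Morita equivalent (as a fusion category) to $\Vect_\omega^{\Gamma}$, we have $\dim(\mathcal{C})=|\Gamma|$, so in particular $n$ divides $|\Gamma|$.

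If $n$ is even, then $\gcd(n,c(\omega))=\gcd(n,2)=2$, which is prime. Theorem~\ref{thm:Frobenius-1}(a) then applies directly and yields $n\mid\nu_n(\mathcal{C})$.

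If $n$ is odd, I would show that the cocycle $\omega$ drops out of the indicator formula entirely, reducing the problem to the classical one. Fix $g\in\Gamma$ with $g^n=1$ and let $\omega_g$ be the restriction of $\omega$ to the cyclic subgroup $\langle g\rangle$; its cohomological order divides $c(\omega)=2$. Applying Lemma~\ref{lem:omega-n}(a) to $\langle g\rangle$, the value $\widetilde{\omega}_n(g)$ is a root of unity whose order divides $\gcd(n,c(\omega_g))$, which in turn divides $\gcd(n,2)=1$ because $n$ is odd. Hence $\widetilde{\omega}_n(g)=1$ for every such $g$, and Theorem~\ref{thm:FS-GT-1} collapses to $\nu_n(\mathcal{C})=|\Gamma[n]|$. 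The classical Frobenius theorem then gives $n\mid|\Gamma[n]|=\nu_n(\mathcal{C})$, completing the proof.

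I do not expect a genuine obstacle here, as the argument is essentially bookkeeping combining Theorem~\ref{thm:Frobenius-1} with Theorem~\ref{thm:Frobenius-org}. The only point that needs care is that the hypothesis of Theorem~\ref{thm:Frobenius-1} demands that $\gcd(n,c(\omega))$ be an actual prime, which fails precisely when $n$ is odd; the observation above about $\widetilde{\omega}_n(g)=1$ is exactly what bridges that gap and allows the classical theorem to take over.
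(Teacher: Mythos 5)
Your proof is correct and follows the route the paper intends: the paper presents this corollary as a direct consequence of Theorem~\ref{thm:Frobenius-1}, whose part (a) is exactly your even case. Your explicit treatment of odd $n$ (where $\gcd(n,c(\omega))=1$ is not prime, so Theorem~\ref{thm:Frobenius-1} is silent) via Remark~\ref{rem:fs-ind-gt}~(ii), Lemma~\ref{lem:omega-n}~(a) and the classical Theorem~\ref{thm:Frobenius-org} correctly supplies the step the paper leaves implicit.
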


\begin{corollary}
  Suppose that $c(\omega)$ is a prime number and that $\nu_n(\mathcal{C})$ is an integer for every $n$. Then Frobenius theorem holds for $\mathcal{C}$.
\end{corollary}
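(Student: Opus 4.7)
The plan is to split the argument by the value of $c(\omega)$ and, in the harder case, by whether that prime divides $n$. If $c(\omega) = 2$, the statement is immediate from Corollary~\ref{cor:Frobenius-2}, so I may assume $c(\omega) = p$ is an odd prime and fix a divisor $n$ of $|\Gamma|$; the goal is to prove $n \mid \nu_n(\mathcal{C})$ in $\mathbb{Z}$.

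First I would dispose of the case $p \nmid n$, i.e.\ $\gcd(n, c(\omega)) = 1$. For every $g \in \Gamma[n]$, Remark~\ref{rem:fs-ind-gt}(ii) shows that $\widetilde{\omega}_n(g)$ depends only on $\omega|_{\langle g \rangle}$, and Lemma~\ref{lem:omega-n}(a) (applied to the cyclic subgroup $\langle g \rangle$) then forces its order to divide both $n$ and the cohomological order of $\omega|_{\langle g \rangle}$, which in turn divides $c(\omega) = p$. Since $\gcd(n, p) = 1$, this forces $\widetilde{\omega}_n(g) = 1$. Hence Theorem~\ref{thm:FS-GT-1} collapses to $\nu_n(\mathcal{C}) = |\Gamma[n]|$, and the classical Frobenius theorem closes this case.

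Next I would treat $p \mid n$, where $\gcd(n, c(\omega)) = p$ is an odd prime and Theorem~\ref{thm:Frobenius-1}(b) tells us that $\alpha := \sqrt{p}\,\nu_n(\mathcal{C})/n$ is an algebraic integer. Using the integrality hypothesis $\nu_n(\mathcal{C}) \in \mathbb{Z}$, the element $\alpha^2 = p\,\nu_n(\mathcal{C})^2/n^2$ is a rational algebraic integer, hence an ordinary integer; equivalently, $n^2 \mid p\,\nu_n(\mathcal{C})^2$ in $\mathbb{Z}$. Writing $n = p^k q$ with $k \ge 1$ and $\gcd(p, q) = 1$, so that $\gcd(p^{2k-1}, q^2) = 1$, this divisibility splits into $q^2 \mid \nu_n(\mathcal{C})^2$ and $p^{2k-1} \mid \nu_n(\mathcal{C})^2$, whence $q \mid \nu_n(\mathcal{C})$ and $p^k \mid \nu_n(\mathcal{C})$ separately, and therefore $n \mid \nu_n(\mathcal{C})$.

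The only genuinely delicate step is the last one: bootstrapping divisibility by $n/\sqrt{p}$ in $\overline{\mathbb{Z}}$ up to divisibility by $n$ in $\mathbb{Z}$. The trick is simply to square $\alpha$ and then redistribute primes across the coprime factorization $n = p^k q$, and it is precisely here that the hypothesis that every $\nu_n(\mathcal{C})$ be an integer is indispensable; without it, $\nu_n(\mathcal{C})$ could carry a genuine $\sqrt{p}$ and fail to be divisible by the full $n$, as the quasi-Hopf example $\mathbb{C}^{\mathbb{Z}_p}_{\psi_p}$ already shows.
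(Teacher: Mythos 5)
Your proof is correct and follows the route the paper intends: the paper states this corollary as a ``direct consequence'' of Theorem~\ref{thm:Frobenius-1} without writing out the deduction, and your argument is exactly that deduction made explicit. The two details you supply --- treating the case $\gcd(n, c(\omega)) = 1$ separately (where Theorem~\ref{thm:Frobenius-1} does not literally apply, but $\widetilde{\omega}_n \equiv 1$ on $\Gamma[n]$ reduces everything to the classical Frobenius theorem) and the squaring trick that upgrades divisibility by $n/\sqrt{p}$ in $\overline{\mathbb{Z}}$ to divisibility by $n$ in $\mathbb{Z}$ via the coprime factorization $n = p^k q$ --- are both sound and are precisely what the paper leaves to the reader.
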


Theorem~\ref{thm:Frobenius-hopf} (d) follows from Corollary~\ref{cor:Frobenius-2}, see Remark~\ref{rem:suzuki-c-1} and Remark~\ref{rem:suzuki-c-2}. Recall that we gave some estimations of $c(\omega)$ in Section~\ref{sec:group-theoretical}. Theorem~\ref{thm:Frobenius-hopf}~(b) follows from the following corollary of Theorem~\ref{thm:Frobenius-1}, which can be obtained by a similar way as the proof of Corollary~\ref{cor:GT-int-criterion-2}.

\begin{corollary}
  \label{cor:Frobenius-3}
  Frobenius theorem holds for semisimple Hopf algebras $H$ fitting into an abelian extension $1 \to \mathbb{C}^G \to H \to \mathbb{C}\mathbb{Z}_2 \to 1$.
\end{corollary}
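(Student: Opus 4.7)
The plan is to reduce to Corollary~\ref{cor:Frobenius-2} in essentially the same way that Corollary~\ref{cor:GT-int-criterion-2} was reduced to Corollary~\ref{cor:GT-int-criterion-1}. By Natale's theorem, invoked already in the proof of Theorem~\ref{thm:FS-GT-1}, if $H$ fits into an abelian extension $1 \to \mathbb{C}^G \to H \to \mathbb{C}\mathbb{Z}_2 \to 1$, then $\Rep(H)$ is monoidally equivalent to the group-theoretical category $\mathcal{C}(\Gamma, \omega, G, 1)$, where $\Gamma = G \bicross \mathbb{Z}_2$ is the bicrossed product associated with the extension and $\omega = \overline{\omega}(H) \in Z^3(\Gamma, \mathbb{C}^\times)$ is the image of the class of $H$ under the connecting map $\overline{\omega}$ in the Kac exact sequence~(\ref{eq:Kac}). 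Thus $\nu_n(H) = \nu_n(\mathcal{C}(\Gamma, \omega, G, 1))$ for every $n$, and $\dim H = |\Gamma|$.

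Next I would exploit exactness of~(\ref{eq:Kac}). Since $\omega$ lies in the image of $\overline{\omega}$, its class in $H^3(\Gamma, \mathbb{C}^\times)$ dies under the subsequent map to $H^3(F, \mathbb{C}^\times) \oplus H^3(G, \mathbb{C}^\times)$. In particular, the restriction of $\omega$ to the normal subgroup $G \subset \Gamma$ is cohomologically trivial. Since $\Gamma/G \cong \mathbb{Z}_2$ has exponent $2$, Lemma~\ref{lem:GT-ind-estim} then yields $c(\omega) \le 2$.

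Finally, I would split into two cases according to the value of $c(\omega)$. If $c(\omega) = 2$, Corollary~\ref{cor:Frobenius-2} immediately gives that Frobenius theorem holds for $\mathcal{C}(\Gamma, \omega, G, 1)$, hence for $H$. If $c(\omega) = 1$, then by Remark~\ref{rem:fs-ind-gt}~(ii) the function $\widetilde{\omega}_n$ reduces to the indicator function $g \mapsto \delta_{g^n, 1}$ on $\Gamma$, so Theorem~\ref{thm:FS-GT-1} gives $\nu_n(H) = |\Gamma[n]|$, and the classical theorem of Frobenius (Theorem~\ref{thm:Frobenius-org}) applied to the finite group $\Gamma$ provides the desired divisibility by $n$ for every $n$ dividing $|\Gamma| = \dim_{\mathbb{C}}(H)$.

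I do not expect any real obstacle here: the Kac sequence hands us the triviality of $\omega|_G$ for free, Lemma~\ref{lem:GT-ind-estim} converts this into the bound $c(\omega)\le 2$, and Corollary~\ref{cor:Frobenius-2} finishes the generic case. The only minor bookkeeping point is making sure the degenerate case $c(\omega) = 1$ is covered, which I would dispatch by the one-line reduction to the classical Frobenius theorem as above.
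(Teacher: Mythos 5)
Your proposal is correct and follows essentially the same route as the paper: the paper proves this corollary by repeating the argument of Corollary~\ref{cor:GT-int-criterion-2} (Kac exact sequence gives cohomological triviality of $\omega|_G$, Lemma~\ref{lem:GT-ind-estim} gives $c(\omega)\le 2$, then Theorem~\ref{thm:Frobenius-1}/Corollary~\ref{cor:Frobenius-2} finishes). Your explicit separate treatment of the degenerate case $c(\omega)=1$ via the classical Frobenius theorem is a reasonable bit of extra care, since Corollary~\ref{cor:Frobenius-2} is literally stated only for $c(\omega)=2$, but it does not change the substance of the argument.
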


\section*{acknowledgements}
The author is grateful to Professor Mitsuhiro Takeuchi for his valuable comments on Section~\ref{sec:preliminaries} and Section~\ref{sec:regular}. This work is supported by Grant-in-Aid for JSPS Fellows ($21 \cdot 2147$).  


\begin{thebibliography}{10}
\providecommand{\url}[1]{#1}
\providecommand{\urlprefix}{URL }
\expandafter\ifx\csname urlstyle\endcsname\relax
  \providecommand{\doi}[1]{DOI~\discretionary{}{}{}#1}\else
  \providecommand{\doi}{DOI~\discretionary{}{}{}\begingroup
  \urlstyle{rm}\Url}\fi

\bibitem{MR1230426}
Altsch{\"u}ler, D., Coste, A.: Invariants of three-manifolds from finite group
  cohomology.
\newblock J. Geom. Phys. \textbf{11}(1-4), 191--203 (1993).
\newblock Infinite-dimensional geometry in physics (Karpacz, 1992)

\bibitem{MR1797619}
Bakalov, B., Kirillov Jr., A.: Lectures on tensor categories and modular
  functors, \emph{University Lecture Series}, vol.~21.
\newblock American Mathematical Society, Providence, RI (2001)

\bibitem{MR1686423}
Barrett, J.W., Westbury, B.W.: Spherical categories.
\newblock Adv. Math. \textbf{143}(2), 357--375 (1999)

\bibitem{MR2183279}
Etingof, P., Nikshych, D., Ostrik, V.: On fusion categories.
\newblock Ann. of Math. (2) \textbf{162}(2), 581--642 (2005)

\bibitem{MR1157226}
Isaacs, I.M., Robinson, G.R.: On a theorem of {F}robenius: solutions of
  {$x^n=1$} in finite groups.
\newblock Amer. Math. Monthly \textbf{99}(4), 352--354 (1992)

\bibitem{MR1919158}
Kashina, Y., Mason, G., Montgomery, S.: Computing the {F}robenius-{S}chur
  indicator for abelian extensions of {H}opf algebras.
\newblock J. Algebra \textbf{251}(2), 888--913 (2002)

\bibitem{KMN09}
Kashina, Y., Montgomery, S., Ng, S.-H.: On the trace of the antipode and higher
  indicators.
\newblock arXiv:0910.1628

\bibitem{MR2213320}
Kashina, Y., Sommerh{\"a}user, Y., Zhu, Y.: On higher {F}robenius-{S}chur
  indicators.
\newblock Mem. Amer. Math. Soc. \textbf{181}(855), viii+65 (2006)

\bibitem{MR1321145}
Kassel, C.: Quantum groups, \emph{Graduate Texts in Mathematics}, vol. 155.
\newblock Springer-Verlag, New York (1995)

\bibitem{MR1808131}
Linchenko, V., Montgomery, S.: A {F}robenius-{S}chur theorem for {H}opf
  algebras.
\newblock Algebr. Represent. Theory \textbf{3}(4), 347--355 (2000)

\bibitem{MR1364343}
Masuoka, A.: Self-dual {H}opf algebras of dimension {$p^3$} obtained by
  extension.
\newblock J. Algebra \textbf{178}(3), 791--806 (1995)

\bibitem{MR1448809}
Masuoka, A.: Calculations of some groups of {H}opf algebra extensions.
\newblock J. Algebra \textbf{191}(2), 568--588 (1997)

\bibitem{MR1913439}
Masuoka, A.: Hopf algebra extensions and cohomology.
\newblock In: New directions in {H}opf algebras, \emph{Math. Sci. Res. Inst.
  Publ.}, vol.~43, pp. 167--209. Cambridge Univ. Press, Cambridge (2002)

\bibitem{MR1002038}
Moore, G., Seiberg, N.: Classical and quantum conformal field theory.
\newblock Comm. Math. Phys. \textbf{123}(2), 177--254 (1989)

\bibitem{MR1966525}
M{\"u}ger, M.: From subfactors to categories and topology. {II}. {T}he quantum
  double of tensor categories and subfactors.
\newblock J. Pure Appl. Algebra \textbf{180}(1-2), 159--219 (2003)

\bibitem{MR0174513}
Nagell, T.: Introduction to number theory.
\newblock Second edition. Chelsea Publishing Co., New York (1964)

\bibitem{MR2016657}
Natale, S.: On group theoretical {H}opf algebras and exact factorizations of
  finite groups.
\newblock J. Algebra \textbf{270}(1), 199--211 (2003)

\bibitem{MR2313527}
Ng, S.-H., Schauenburg, P.: Frobenius-{S}chur indicators and exponents of
  spherical categories.
\newblock Adv. Math. \textbf{211}(1), 34--71 (2007)

\bibitem{MR2381536}
Ng, S.-H., Schauenburg, P.: Higher {F}robenius-{S}chur indicators for pivotal
  categories.
\newblock In: Hopf algebras and generalizations, \emph{Contemp. Math.}, vol.
  441, pp. 63--90. Amer. Math. Soc., Providence, RI (2007)

\bibitem{MR2366965}
Ng, S.-H., Schauenburg, P.: Central invariants and higher indicators for
  semisimple quasi-{H}opf algebras.
\newblock Trans. Amer. Math. Soc. \textbf{360}(4), 1839--1860 (2008)

\bibitem{MR1822847}
Schauenburg, P.: The monoidal center construction and bimodules.
\newblock J. Pure Appl. Algebra \textbf{158}(2-3), 325--346 (2001)

\bibitem{MR2095575}
Schauenburg, P.: On the {F}robenius-{S}chur indicators for quasi-{H}opf
  algebras.
\newblock J. Algebra \textbf{282}(1), 129--139 (2004)

\bibitem{MR1637640}
Suzuki, S.: A family of braided cosemisimple {H}opf algebras of finite
  dimension.
\newblock Tsukuba J. Math. \textbf{22}(1), 1--29 (1998)

\bibitem{MR611561}
Takeuchi, M.: Matched pairs of groups and bismash products of {H}opf algebras.
\newblock Comm. Algebra \textbf{9}(8), 841--882 (1981)

\bibitem{MR1659954}
Tambara, D., Yamagami, S.: Tensor categories with fusion rules of self-duality
  for finite abelian groups.
\newblock J. Algebra \textbf{209}(2), 692--707 (1998)

\bibitem{MR1292673}
Turaev, V.G.: Quantum invariants of knots and 3-manifolds, \emph{de Gruyter
  Studies in Mathematics}, vol.~18.
\newblock Walter de Gruyter \& Co., Berlin (1994)

\bibitem{MR1269324}
Weibel, C.A.: An introduction to homological algebra, \emph{Cambridge Studies
  in Advanced Mathematics}, vol.~38.
\newblock Cambridge University Press, Cambridge (1994)
\end{thebibliography}

\end{document}